\documentclass[a4paper,11pt]{article}
\usepackage[latin1]{inputenc}
\usepackage[english]{babel}
\usepackage{amsmath}
\usepackage{amsfonts}
\usepackage{amssymb}
\usepackage{epsfig}
\usepackage{amsopn}
\usepackage{amsthm}
\usepackage{color}
\usepackage{graphicx}
\usepackage{subfigure}
\usepackage{enumerate}
\setlength{\oddsidemargin}{0.25in} \addtolength{\hoffset}{0cm}
\addtolength{\textwidth}{2.5cm} \addtolength{\voffset}{-1cm}
\addtolength{\textheight}{1cm}
\newtheorem{theorem}{Theorem}[section]

\newtheorem{lemma}[theorem]{Lemma}
\newtheorem{proposition}[theorem]{Proposition}
\newtheorem{definition}[theorem]{Definition}

\newtheorem*{theorem*}{Theorem}
\newtheorem*{lemma*}{Lemma}
\newtheorem*{remark*}{Remark}
\newtheorem*{definition*}{Definition}
\newtheorem*{proposition*}{Proposition}
\newtheorem*{corollary*}{Corollary}
\numberwithin{equation}{section}
%

\newcommand{\real}{\mathbb{R}}










\def\qed{\,\unskip\kern 6pt \penalty 500
\raise -2pt\hbox{\vrule \vbox to8pt{\hrule width 6pt
\vfill\hrule}\vrule}\par}
\definecolor{darkblue}{rgb}{0.05, .05, .65}
\definecolor{darkgreen}{rgb}{0.1, .65, .1}
\definecolor{darkred}{rgb}{0.8,0,0}
\newcommand{\beqn}{\begin{equation}}
\newcommand{\eeqn}{\end{equation}}
\newcommand{\bear}{\begin{eqnarray}}
\newcommand{\eear}{\end{eqnarray}}
\newcommand{\bean}{\begin{eqnarray*}}
\newcommand{\eean}{\end{eqnarray*}}
%


\begin{document}

\title{\huge \bf Self-similar blow-up profiles for a reaction-diffusion equation with critically strong weighted reaction}

\author{
\Large Razvan Gabriel Iagar\,\footnote{Departamento de Matem\'{a}tica
Aplicada, Ciencia e Ingenieria de los Materiales y Tecnologia
Electr\'onica, Universidad Rey Juan Carlos, M\'{o}stoles,
28933, Madrid, Spain, \textit{e-mail:} razvan.iagar@urjc.es},
\\[4pt] \Large Ariel S\'{a}nchez,\footnote{Departamento de Matem\'{a}tica
Aplicada, Ciencia e Ingenieria de los Materiales y Tecnologia
Electr\'onica, Universidad Rey Juan Carlos, M\'{o}stoles,
28933, Madrid, Spain, \textit{e-mail:} ariel.sanchez@urjc.es}\\
[4pt] }
\date{}
\maketitle

\begin{abstract}
We classify the self-similar blow-up profiles for the following reaction-diffusion equation with critical strong weighted reaction and unbounded weight:
$$
\partial_tu=\partial_{xx}(u^m) + |x|^{\sigma}u^p,
$$
posed for $x\in\real$, $t\geq0$, where $m>1$, $0<p<1$ such that $m+p=2$ and $\sigma>2$ completing the analysis performed in a recent work where this very interesting critical case was left aside. We show that finite time blow-up solutions in self-similar form exist for $\sigma>2$. Moreover all the blow-up profiles have compact support and their supports are \emph{localized}: there exists an explicit $\eta>0$ such that any blow-up profile satisfies ${\rm supp}\,f\subseteq[0,\eta]$. This property is unexpected and contrasting with the range $m+p>2$. We also classify the possible behaviors of the profiles near the origin.
\end{abstract}

\

\noindent {\bf AMS Subject Classification 2010:} 35B33, 35B40,
35K10, 35K67, 35Q79.

\smallskip

\noindent {\bf Keywords and phrases:} reaction-diffusion equations,
weighted reaction, blow-up, self-similar solutions, phase
space analysis, strong reaction.

\section{Introduction}

This paper is devoted to the study of the self-similar blow-up profiles for the following reaction-diffusion equation with weighted reaction
\begin{equation}\label{eq1}
u_t=(u^m)_{xx}+|x|^{\sigma}u^p, \qquad u=u(x,t), \quad
(x,t)\in\real\times(0,T),
\end{equation}
with exponents $1<m<2$, $0<p<1$ such that $m+p=2$ and $\sigma>2$. This work completes the analysis performed in the recent paper \cite{IS20b}, where blow-up profiles were obtained and analyzed for the range $m>1$, $0<p<1$ but with $m+p\neq2$. Let us stress at this point that the case we are dealing with here is a critical one as being a limit case between the ranges of exponents $m+p>2$ (characterized by a big variety of compactly supported blow-up profiles, with two different types of interface) and $m+p<2$ (where complete non-existence of blow-up profiles is proved). Thus, some interesting differences with respect to the two well-understood cases are expected to hold true when $m+p=2$ and we show in the present work that this is indeed the case.

Starting from the famous paper by Fujita \cite{Fu66}, studying the finite time blow-up phenomenon for solutions to reaction-diffusion equations has become a very interesting and developing line of research in the field of parabolic partial differential equations and many of the outstanding experts of the field contributed to this research. To fix the ideas and notation, we say that a solution $u$ to Eq. \eqref{eq1} \emph{blows up in finite time} if there exists $T\in(0,\infty)$ such that $u(T)\not\in L^{\infty}(\real)$, but $u(t)\in L^{\infty}(\real)$ for any $t\in(0,T)$. The smallest time $T<\infty$ satisfying this property is known as the blow-up time of $u$. We also use throughout the paper the (well-established) notation $u(t)$ for the map $x\mapsto u(x,t)$ at a fixed time $t\in[0,T]$.

At first, research on finite time blow-up focused on establishing for which initial conditions the solutions to the Cauchy or Dirichlet problems presented finite time blow-up. This is now well-understood for many of the most interesting model equations. Later on, mathematicians begun to raise finer problems related to the phenomenon of finite-time blow-up, such as studying at which points do the solutions become infinite (the blow-up set) and also estimating the dynamics of the solutions near the blow-up time, in the form of both blow-up rates, that is sharp estimates for $\|u(t)\|_{\infty}$ as $t$ approaches the blow-up time $T$, and blow-up patterns (or profiles), that is special solutions with symmetries towards which many general solutions converge as $t\to T$. All this long term research program has been performed at first for the prototype reaction-diffusion equation
\begin{equation}\label{eq1.hom}
u_t=\Delta u^m+u^p, \qquad m\geq1, \ p>1,
\end{equation}
in one dimension and later in higher space dimension, where a number of questions are still open. We have nowadays two well-written monographs on the blow-up phenomenon for Eq. \eqref{eq1.hom}, one dealing with the semilinear case $m=1$ \cite{QS} and another with the quasilinear case $m>1$ \cite{S4}. In particular, related to the goal of our paper, the \emph{relevance of the self-similar blow-up profiles} to Eq. \eqref{eq1.hom} in the development of the general theory is evident from, for example, \cite[Chapter 4]{S4}, the study of the blow-up profiles being later extended in \cite{GV97}.

The analysis of the finite time blow-up for reaction-diffusion equations with \emph{weighted reaction}, that is (in the most general form)
\begin{equation}\label{eq1.gen}
u_t=\Delta u^m+V(x)u^p,
\end{equation}
where $V(x)$ is a function of $x$ with suitable properties, started later due to the difficulty of the problem and to the fact that some of the techniques used in the study of Eq. \eqref{eq1.hom} do no longer apply for equations with non-constant coefficients (noticeably, tools such as intersection comparison or the translation invariance of the equation). In the semilinear case $m=1$ works by Baras and Kersner \cite{BK87}, Bandle and Levine \cite{BL89} completed by Levine and Meier \cite{LM90} on cones and later Pinsky \cite{Pi97, Pi98} established the Fujita exponent $p^*$, that is the largest exponent $p>1$ such that for any $p\in(1,p^*)$ all the non-trivial solutions blow up in finite time, for the power weight $V(x)=|x|^{\sigma}$ or more general unbounded weights and studied some further properties of the solutions. The same problem of establishing the Fujita exponent has been addressed by Suzuki \cite{Su02} for $m>1$ and $p>m$, and for weights $V(x)$ such that $V(x)=|x|^{\sigma}$ at least for $|x|>R$ large. Moreover, in the same paper Suzuki gives a criterion for finite time blow-up in terms of the behavior of the initial condition $u_0(x)$ as $|x|\to\infty$ for the remaining range $p>p^*$.

Concerning the qualitative theory, Andreucci and DiBenedetto perform in \cite{AdB91} a complete study of local existence, initial traces and Harnack inequalities for Eq. \eqref{eq1.gen} with $m>1$, $p>1$ and $V(x)=(1+|x|)^{-\sigma}$ for any $\sigma\in\real$, thus bringing the basic theory for the Cauchy problem with both bounded and unbounded weights. With similar techniques, Andreucci and Tedeev \cite{AT05} obtained the blow-up rate of solutions for a class of more general parabolic equations containing \eqref{eq1.gen} as a particular case, again for $p>m>1$ and for weights $V(x)=|x|^{\sigma}$ for $m<p<m+2/N$ and $0<\sigma\leq N(p-m)/m$, where most probably the upper bound for $\sigma$ is just a technical limitation. More recently, a number of problems related to the finite time blow-up of solutions to Eq. \eqref{eq1.gen} have been investigated for compactly supported weights $V(x)$ starting from the work by Ferreira, de Pablo and V\'azquez \cite{FdPV06} in the one-dimensional case. These results were then extended to $\real^N$ in \cite{KWZ11, Liang12} and also to the fast diffusion case $m<1$ by \cite{BZZ11}. Another problem thoroughly studied recently, concerning the blow-up sets, is to decide whether the zeros of the weight $V(x)$ can be a blow-up point or not for a solution to Eq. \eqref{eq1.gen}. This has been considered in a series of works dealing mostly with the case of the homogeneous Dirichlet problem in a bounded domain, such as \cite{GLS10, GLS13, GS11, GS18}.

The qualitative theory and the study of the dynamics for solutions to Eq. \eqref{eq1.hom} with $m>1$ but $0<p<1$ has been developed in a series of papers by de Pablo and V\'azquez \cite{dPV90, dPV91, dPV92, dP94} in which the authors prove (in dimension one) that the Cauchy problem for Eq. \eqref{eq1.hom} is ill posed as uniqueness of solutions is lacking. It is in fact proved that for any initial condition $u_0$ in suitable spaces, there exist a minimal and a maximal solution, and some criteria for uniqueness are given strongly depending on the sign of the expression $m+p-2$: when $m+p\geq2$ uniqueness holds true if and only if $u_0(x)>0$ for any $x\in\real$, while for $m+p<2$ uniqueness holds true for any data $u_0\not\equiv0$. This and the difference with respect to the speed of propagation (finite if $m+p\geq2$, infinite if $m+p<2$) show us that the sign of the number $m+p-2$ is critical for the equation.

Coming back to our Eq. \eqref{eq1}, the authors started recently a long-term project to study the influence of the unbounded weight $|x|^{\sigma}$ on the blow-up phenomenon. Taking into account the relevance of the self-similar blow-up patterns for the general dynamics, we begun with the study of them and we have thus proved in previous works devoted to the range $p\geq1$ \cite{IS19a, IS19b, IS20a} that the magnitude of $\sigma$ has a \emph{strong influence on the form and blow-up set} of the solutions. Even stronger than that, for the case $p=m>1$ \cite{IS20a}, multiple blow-up profiles may exist for $\sigma$ small but for $\sigma$ sufficiently large they even cease to exist. Entering the range $0<p<1$, we classified the blow-up profiles to Eq. \eqref{eq1} with $\sigma>2(1-p)/(m-1)$ and $m+p\neq2$ in our previous paper \cite{IS20b}, obtaining among other results the following most interesting ones:

$\bullet$ with $\sigma>2(1-p)/(m-1)$ finite time blow-up occurs for compactly supported self-similar solutions, a fact that is due to the presence of the weight. For $\sigma=0$ compactly supported solutions to Eq. \eqref{eq1} are global.

$\bullet$ for $m+p>2$, there are many blow-up profiles, presenting two different interface behaviors, one inherited from the porous medium equation and one inherited only from the combination between $u_t$ and the reaction term. For any fixed $\sigma>2(1-p)/(m-1)$ there are infinitely many profiles with the second type of interface and at least one (we conjecture that exactly one) profile with the first type of interface.

$\bullet$ for $m+p<2$ there are no blow-up self-similar solutions at all. We strongly believe that this has to do with a general non-existence (in the form of an instantaneous global blow-up) of \emph{any non-trivial solution} for $m+p<2$. We prove in fact in the forthcoming paper \cite{IMS20} complete non-existence of nonzero solutions to a similar and related equation to Eq. \eqref{eq1}, more precisely for Eq. \eqref{eq1.gen} with $V(x)=(1+|x|)^{\sigma}$ for $\sigma>2(1-p)/(m-1)$.

The present paper is aimed to complete the work done in \cite{IS20b} by addressing the remaining case, that is when $m+p=2$ and $\sigma>2$, which plays the role of an interface between the two ranges explained above. As we shall see, significant differences appear both with respect to the results and to some of the methods in the proofs. This will be detailed in the following paragraphs where we expose our main results.

\bigskip

\noindent \textbf{Main results}. As already mentioned, the present work is devoted to the self-similar blow-up profiles for Eq. \eqref{eq1}. Their big relevance for the general theory of the equations is well-established as shown for example for the standard porous medium equation (where the theory has been developed having the Barenblatt solutions as a core \cite{VPME}) or the quasilinear reaction-diffusion equations without weights \cite[Chapter 4]{S4}. Let us add that the results obtained in the current work and in \cite{IS20b} will be strongly used in the forthcoming paper \cite{IMS20} on the qualitative theory of solutions to a similar equation. We already expect from the study performed in \cite{IS20b} that for $\sigma>2$ finite time blow-up occurs, thus we look for \emph{backward self-similar solutions} in the form
\begin{equation}\label{SSform}
u(x,t)=(T-t)^{-\alpha}f(\xi), \quad \xi=|x|(T-t)^{\beta},
\end{equation}
where $T\in(0,\infty)$ is the finite blow-up time and $\alpha>0$, $\beta\in\real$ exponents to be determined. Since $p=2-m$ we get by replacing the ansatz \eqref{SSform} in \eqref{eq1} that
\begin{equation}\label{SSexp}
\alpha=\frac{\sigma+2}{\sigma(m-1)+2(p-1)}=\frac{\sigma+2}{(\sigma-2)(m-1)}, \quad \beta=\frac{m-p}{\sigma(m-1)+2(p-1)}=\frac{2}{\sigma-2}
\end{equation}
and the self-similar profile $f$ is a solution to the non-autonomous differential equation
\begin{equation}\label{SSODE}
(f^m)''(\xi)-\alpha f(\xi)+\beta\xi f'(\xi)+\xi^{\sigma}f^{2-m}(\xi)=0, \quad \xi\in[0,\infty).
\end{equation}
It is easy to see that the lower bound for $\sigma$ is needed in order to have blow-up solutions, a fact which is in \emph{strong contrast} with, for example, the autonomous case $\sigma=0$ where solutions exist and remain bounded globally in time, as shown for example in \cite{dPV90, dPV91}. We thus conclude that the weight plays an essential role for finite time blow-up to occur. We define what we understand by a \emph{good profile} below (similar to \cite{IS19a, IS20b}).
\begin{definition}\label{def1}
We say that a solution $f$ to the differential equation \eqref{SSODE} is a \textbf{good profile} if it fulfills one of the following two properties related to its behavior at $\xi=0$:

\indent (P1) $f(0)=a>0$, $f'(0)=0$.

\indent (P2) $f(0)=0$, $(f^m)'(0)=0$.

\noindent We say that a profile $f$ has an \textbf{interface} at some point $\xi_0\in(0,\infty)$ if
$$
f(\xi_0)=0, \qquad (f^m)'(\xi_0)=0, \qquad f>0 \ {\rm on} \ (\xi_0-\delta,\xi_0), \ {\rm for \ some \ } \delta>0.
$$
\end{definition}
This definition agrees with the well-known notion of an interface for an evolutionary solution. Indeed, a solution $u$ to Eq. \eqref{eq1} in the form \eqref{SSform} with a profile $f$ having an interface at $\xi_0\in(0,\infty)$, has a time-moving interface at $|x|=s(t)=(T-t)^{-\beta}\xi_0$ for any $t\in(0,T)$. Throughout the paper we are interested in the \emph{good profiles with interface} according to Definition \ref{def1}. For $m+p>2$ we proved in \cite{IS20b} that there are two different interface behaviors and that any $\xi_0\in(0,\infty)$ can be the interface point for at least a profile. In our case things are very different. First of all, the two interface behaviors whose existence has been proved in \cite{IS20b}, namely
\begin{equation}\label{beh.interf}
f(\xi)\sim C(\xi_0-\xi)^{1/(m-1)}, \ {\rm respectively} \ f(\xi)\sim C(\xi_0-\xi)^{1/(1-p)}, \ {\rm as} \ \xi\to\xi_0\in(0,\infty)
\end{equation}
join for $m+p=2$, thus we have a \emph{single interface behavior} given by any of the two asymptotic expressions in \eqref{beh.interf}. But even more striking is the fact that \emph{the set of points $\xi_0\in(0,\infty)$} that can be points of interface for a profile \emph{is localized}. Introducing for a fixed $\sigma>2$ the following constant
\begin{equation}\label{ximax}
\xi_{max}:=\left(\frac{\beta^2}{4m}\right)^{1/(\sigma-2)}=\left(\frac{1}{m(\sigma-2)^2}\right)^{1/(\sigma-2)}\in(0,\infty),
\end{equation}
we can state our existence result for blow-up profiles in the case $m+p=2$.
\begin{theorem}\label{th.exist}
Given $m$, $p$ and $\sigma$ such that $m+p=2$ and $\sigma>2$, for any $\xi_0\in(0,\xi_{max}]$, there exists at least a good profile with interface $f(\xi)$ in the sense of Definition \ref{def1} having its interface point exactly at $\xi=\xi_0$. For any $\xi_0\in(\xi_{max},\infty)$ there is no good profile with interface exactly at $\xi_0$.
\end{theorem}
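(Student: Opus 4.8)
The plan is to translate the non-autonomous equation \eqref{SSODE} into an autonomous quadratic dynamical system, as done in \cite{IS19a,IS20b} for the non-critical ranges, and then to combine a careful local analysis at a prospective interface point with a shooting argument carried out \emph{from the interface towards the origin}. The localization phenomenon — and with it the entire non-existence part of the statement — is already visible at the level of the local analysis near the interface, so I would begin there.

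\textbf{Local analysis at the interface.} Suppose $f$ has an interface at some $\xi_0\in(0,\infty)$ in the sense of Definition \ref{def1}. Since $m+p=2$ we have $1/(1-p)=1/(m-1)$, so the two interface behaviors in \eqref{beh.interf} coincide, and one shows (directly, or by linearizing the autonomous system at the critical point associated with the interface, which pins down the exponent) that necessarily
$$
f(\xi)=C(\xi_0-\xi)^{1/(m-1)}(1+o(1)),\qquad \xi\to\xi_0^-,
$$
for some $C>0$. Inserting this ansatz into \eqref{SSODE} and keeping the dominant terms $(f^m)''$, $\beta\xi f'$ and $\xi^{\sigma}f^{2-m}$ (the term $\alpha f$ being of strictly lower order as $\xi\to\xi_0$, precisely because $m>1$), the constant $C$ must satisfy, with $y:=C^{m-1}>0$,
$$
\frac{m}{(m-1)^2}\,y^2-\frac{\beta\,\xi_0}{m-1}\,y+\xi_0^{\sigma}=0 .
$$
Its discriminant equals $\frac{\xi_0^2}{(m-1)^2}\bigl(\beta^2-4m\,\xi_0^{\sigma-2}\bigr)$, which is nonnegative exactly when $\xi_0\le\bigl(\beta^2/(4m)\bigr)^{1/(\sigma-2)}=\xi_{max}$; and when it is nonnegative both roots are positive, since their sum $\beta\xi_0(m-1)/m$ and their product $(m-1)^2\xi_0^{\sigma}/m$ are positive. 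Hence for $\xi_0>\xi_{max}$ there is \emph{no} admissible constant $C$, therefore no good profile with interface at $\xi_0$ — this is precisely the second assertion of Theorem \ref{th.exist}. For $\xi_0=\xi_{max}$ there is a unique (double) root, and for $\xi_0\in(0,\xi_{max})$ there are two distinct ones.

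\textbf{Existence for $\xi_0\in(0,\xi_{max}]$.} Fix such a $\xi_0$ and let $C>0$ be (either) root above. First I would construct a local solution of \eqref{SSODE} on an interval $(\xi_0-\delta,\xi_0)$ with this prescribed asymptotics, a routine contraction-mapping argument in a suitable weighted space once the ansatz is expanded to the next order. Then I continue this solution leftwards on the maximal interval $(\xi_*,\xi_0)$ on which $f>0$, and I must show $\xi_*=0$ with $f$ extending to a good profile, i.e. satisfying (P1) or (P2). For this I pass to the autonomous system: the local solution near $\xi_0$ corresponds to a trajectory emanating from the critical point encoding the interface; I would exhibit an invariant (trapping) region of the phase space containing this trajectory, deduce that $f$ and its relevant derivatives stay bounded and $f$ stays positive on $(0,\xi_0)$, and conclude that as $\xi\to0^+$ the trajectory converges to the critical point associated with the behaviors (P1)–(P2) at the origin, which is the only admissible $\omega$-limit inside that region. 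Tracking which root $C$ produces a genuinely good profile — in particular showing that at least one of them does for every $\xi_0\in(0,\xi_{max}]$, including the degenerate endpoint $\xi_0=\xi_{max}$ — completes the argument.

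\textbf{Main obstacle.} The delicate point is the global backward continuation: ruling out that the solution issued from the interface loses positivity at some $\xi_1\in(0,\xi_0)$ with nonzero slope (a ``bad'' interface, not excluded a priori by the ODE) or that some component of the phase-space trajectory escapes to infinity before reaching $\xi=0$. This demands identifying the correct invariant region for the autonomous system and, quite possibly, a monotone quantity along the flow. The non-autonomous factor $\xi^{\sigma}$ in \eqref{SSODE} together with the merging of the two interface coefficients makes the phase portrait qualitatively different from the one in \cite{IS20b}, and this is where the genuinely new analysis is concentrated.
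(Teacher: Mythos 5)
Your treatment of the non-existence half is essentially the paper's own reasoning made explicit: the quadratic for $y=C^{m-1}$ (equivalently for $g'(\xi_0)$ with $g=mf^{m-1}/(m-1)$) and its discriminant $\frac{\xi_0^2}{(m-1)^2}\bigl(\beta^2-4m\xi_0^{\sigma-2}\bigr)$ reproduce exactly the formal computation in the introduction, and the paper makes it rigorous by observing that an interface corresponds to an orbit entering a critical point on the parabola \eqref{parabola}, whose $Z$-coordinate is at most $\beta^2/4\alpha^2$ while $Z$ is non-decreasing along trajectories. Your version needs one extra justification (that \emph{every} interface in the sense of Definition \ref{def1} forces the power $1/(m-1)$, not some faster decay), but that is a standard dominant-balance/phase-space argument and is not the issue.

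The existence half, however, has a genuine gap, and it is precisely the one the paper flags: backward shooting from the interface does not work here. The point you are missing is that fixing $\xi_0\in(0,\xi_{max})$ and fixing the admissible leading coefficient $C$ does \emph{not} determine the local solution near $\xi_0$. For the root corresponding to $\lambda\in(-\beta/2\alpha,0)$ the associated critical point $P_0^{\lambda}$ has a \emph{two-dimensional} stable manifold, so there is a one-parameter family of local solutions with the same leading behavior $C(\xi_0-\xi)^{1/(m-1)}$, distinguished only by a higher-order, non-integer-power correction (the parameter $K_1$ in \eqref{interm7}); your contraction-mapping step would have to select one of them and cannot. Worse, the members of this family behave very differently as $\xi\to0$: the paper shows that an open range of parameters gives orbits from $Q_1$ with $f'(0)<0$, another open range gives orbits from $Q_2$ (profiles vanishing at an interior point with nonzero slope — exactly the "bad interface" you worry about), and only a borderline parameter yields a good profile. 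Consequently no trapping region containing "the" trajectory from the interface can force convergence to the origin-behavior critical point, since the bad orbits enter $P_0^{\lambda}$ through the same neighborhood. The shooting must be performed in the parameter $K_1$ of the family of orbits entering each $P_0^{\lambda}$ (Propositions \ref{prop.first} and \ref{prop.max}), with a separate and substantially harder argument at the vertex $\lambda=-\beta/2\alpha$, where the entering orbits form a center-stable manifold built from a one-parameter family of non-analytic, exponentially flat center manifolds as in \eqref{center.man}. Also note that the other root $C$ (the one with $\lambda\in(-\beta/\alpha,-\beta/2\alpha)$) never produces a good profile — Proposition \ref{prop.second} shows the unique orbit entering such a point has $f'(0)<0$ — so "tracking which root works" is not where the freedom lies.
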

This result is in \emph{striking contrast} with the range of exponents $p\in(1,m)$, $p=1$ or $p\in(0,1)$ with $m+p>2$, analyzed in previous works \cite{IS19a, IS19b, IS20b}, where any $\xi_0\in(0,\infty)$ can be an interface point.

\medskip

\noindent \textbf{A formal calculation to understand Theorem \ref{th.exist}}. The "mysterious" localization of $\xi_0$ in Theorem \ref{th.exist} can be understood from the equation of the profiles \eqref{SSODE} by the following formal calculation. Indeed, letting $g(\xi)=mf^{m-1}(\xi)/(m-1)$ (the pressure variable), the equation of profiles becomes
\begin{equation}\label{pressure}
(m-1)g(\xi)g''(\xi)+(g')^2(\xi)-(m-1)\alpha g(\xi)+\beta\xi g'(\xi)+m\xi^{\sigma}\left(\frac{m-1}{m}g(\xi)\right)^{(m+p-2)/(m-1)}=0.
\end{equation}
Assuming at a formal level (satisfied nevertheless rigorously by the self-similar solutions) that the terms in $g$ and $g''g$ vanish at the given interface point $\xi_0\in(0,\infty)$ and that $g'(\xi_0)\neq0$ (as it follows from \eqref{beh.interf}), we notice the reason of this striking contrast between the cases $m+p>2$ and $m+p=2$. In the former, since $m+p>2$, the final term in \eqref{pressure} vanishes too at $\xi=\xi_0$, whence
$$
g'(\xi_0)+\beta\xi_0=0,
$$
which gives the derivative of $g$ at the interface point $\xi=\xi_0$ and no limitation on $\xi_0\in(0,\infty)$, as seen in \cite{IS20b}. Meanwhile in the latter, since $m+p=2$, the final term does not vanish and evaluating \eqref{pressure} at $\xi=\xi_0$ we are left with
$$
(g'(\xi_0))^2+\beta\xi_0 g'(\xi_0)+m\xi_0^{\sigma}=0,
$$
which is a second degree equation on $g'(\xi_0)$ whose discriminant is
$$
\Delta=\beta^2\xi_0^2-4m\xi_0^{\sigma}.
$$
The requirement for the existence of a solution to \eqref{pressure} with interface at the given point $\xi=\xi_0$ translates into $\Delta\geq0$, that is $\xi_0\in(0,\xi_{max}]$ as stated in Theorem \ref{th.exist}.

\medskip

\noindent \textbf{Classification of the profiles}. Since the interface behavior at any $\xi_0\in(0,\xi_{max}]$ is the same as in \eqref{beh.interf}, we can classify the good profiles with interface by their behavior as $\xi\to0$. As also seen in previous papers \cite{IS19a, IS20b} the good profiles solutions to \eqref{SSODE} can have three different types of behavior as $\xi\to0$ and they are very significant with respect to the blow up of the corresponding self-similar solutions given by \eqref{SSform}:

$\bullet$ profiles satisfying assumption (P1) in Definition \ref{def1}, and the corresponding self-similar solutions given by \eqref{SSform} blow up globally (that is, simultaneously at every $x\in\real$) as $t\to T$, as shown by the following calculation
\begin{equation*}
u(x,t)=(T-t)^{-\alpha}f(|x|(T-t)^{\beta})\sim a(T-t)^{-\alpha}, \quad {\rm as} \ t\to T.
\end{equation*}

$\bullet$ profiles satisfying assumption (P2) in Definition \ref{def1} with the specific behavior
\begin{equation}\label{beh.P2}
f(\xi)\sim\left[\frac{m-1}{2m(m+1)}\right]^{1/(m-1)}\xi^{2/(m-1)}, \qquad {\rm as} \ \xi\to0,
\end{equation}
and the corresponding self-similar solutions given by \eqref{SSform} also blow up globally as $t\to T$, as shown by the following calculation at every $x\in\real$
\begin{equation*}
u(x,t)\sim C(T-t)^{-\alpha+2\beta/(m-1)}|x|^{2/(m-1)}=C(T-t)^{-1/(m-1)}|x|^{2/(m-1)}, \quad {\rm as} \ t\to T.
\end{equation*}

$\bullet$ profiles satisfying assumption (P2) in Definition \ref{def1} with the specific behavior
\begin{equation}\label{beh.P0}
f(\xi)\sim K\xi^{(\sigma+2)/(m-p)}, \ \ K>0, \qquad {\rm as} \ \xi\to0,
\end{equation}
and the corresponding self-similar solutions given by \eqref{SSform} remain bounded at every fixed $x\in\real$ as it follows from
$$
u(x,t)\sim C(T-t)^{-\alpha+(\sigma+2)\beta/(m-p)}|x|^{(\sigma+2)/(m-p)}=C|x|^{(\sigma+2)/(m-p)}<\infty, \quad {\rm as} \ t\to T.
$$
Such profiles blow up at $t=T$ \emph{only at the space infinity} in the following sense \cite{La84, GU06}: $\|u(t)\|_{\infty}\to\infty$ as $t\to T$ but the maximum attains on curves $x(t)$ depending on $t$ such that $x(t)\to\infty$ as $t\to T$.

With respect to these possible behaviors of the good profiles at $\xi=0$, we classify the good profiles with interface in the following long statement gathering all the possible cases.
\begin{theorem}\label{th.class}
Let $m>1$, $p\in(0,1)$ be such that $m+p=2$ and $\sigma>2$. We have:

(a) For any $\sigma\in(2,\infty)$ there exist good profiles with interface behaving as in \eqref{beh.P0} as $\xi\to0$. The corresponding self-similar solutions blow up in finite time only at space infinity.

(b) There exists $\sigma_0>2$ such that for any $\sigma\in(2,\sigma_0)$ there exist good profiles with interface and with any of the three possible behaviors as $\xi\to0$.

(c) For any fixed $\xi_0\in(0,\xi_{max}]$ there exists an exponent $\sigma(\xi_0)>2$ (depending on $\xi_0$) such that with $\sigma=\sigma(\xi_0)$ there exists a good profile with interface exactly at $\xi=\xi_0$ and with the behavior given by \eqref{beh.P2} as $\xi\to0$.

(d) There exists $\sigma_1>2$ such that for any $\sigma\in(\sigma_1,\infty)$ there are no good profiles with interface with behavior given by \eqref{beh.P2} as $\xi\to0$.
\end{theorem}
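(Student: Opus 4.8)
The plan is to recast the profile equation \eqref{SSODE} as an autonomous dynamical system, following the framework of \cite{IS20b}, exploiting the simplification special to $m+p=2$: in the pressure variable $g=mf^{m-1}/(m-1)$ the reaction contributes to \eqref{pressure} only the term $m\xi^\sigma$. After a suitable change of variables (roughly, passing to projective-type variables built from $g$, $\xi g'$, $\xi^2$, taking $\eta=\ln\xi$ as the new independent variable, and adjoining one coordinate that carries $\xi^{\sigma-2}$) one obtains a polynomial flow on a compact region. The first task is to list its critical points and match them with the admissible asymptotics at the origin and at the interface: a point $P_1\leftrightarrow$ (P1), a point $P_0\leftrightarrow$ \eqref{beh.P0}, a point $P_2\leftrightarrow$ \eqref{beh.P2}, and a critical set $Q$ associated with the interface behavior \eqref{beh.interf}, where the two admissible slopes of $g$ at $\xi_0$ are exactly the roots of $(g'(\xi_0))^2+\beta\xi_0 g'(\xi_0)+m\xi_0^\sigma=0$ from the formal computation, real if and only if $\xi_0\le\xi_{max}$, together with critical points ``at infinity'' that correspond to profiles blowing up at a finite $\xi$ and must be discarded. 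Linearizing at $P_1$, $P_0$, $P_2$ and $Q$ and computing the relevant invariant manifolds should show that the orbits emanating from $P_1$ (resp.\ $P_0$) form a one-parameter family carrying the free constant $a$ in (P1) (resp.\ $K$ in \eqref{beh.P0}), that $P_2$ has a one-dimensional unstable manifold, the ``separatrix'', along which \eqref{beh.P2} holds with its determined constant, and that $Q$ emits, as $\xi$ decreases from $\xi_0$, a one-parameter family of orbits parametrized by $\xi_0\in(0,\xi_{max}]$ and the choice of slope.

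For part (a) and the existence part of (b) I would run a connection argument. By Theorem \ref{th.exist} every $\xi_0\in(0,\xi_{max}]$ already carries a good profile, that is, an orbit running from $Q$ back to $\xi=0$; what is left is to identify its behavior at the origin. Using invariant-region and monotonicity lemmas to confine these orbits (ruling out escape to the critical points at infinity and to ``bad'' sectors), I would show that the generic such orbit limits onto $P_0$, so that \eqref{beh.P0} holds for an open, non-empty set of $\xi_0$; this gives (a) for every $\sigma>2$. For $\sigma$ close to $2$ I would show, by examining the extreme members of this one-parameter family (or by perturbing off the limiting system as $\sigma\downarrow 2$, after a rescaling of the phase variables that absorbs the blow-up of $\alpha$ and $\beta$), that some orbits also land on $P_1$ and on $P_2$; this yields the existence assertions of (b), with $\sigma_0$ defined as the supremum of the admissible exponents.

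Parts (c) and (d) I would treat together as a bifurcation analysis of the separatrix $W^u(P_2)$. Continued in the direction of increasing $\xi$, this orbit either reaches $Q$, producing a good profile with behavior \eqref{beh.P2} and a well-defined interface point $\xi_0^{P2}(\sigma)$ that is continuous in $\sigma$ by continuous dependence, or it does not. For (c) I would show that $\xi_0^{P2}(\sigma)\to\infty$ as $\sigma\downarrow 2$, while towards the other end of the interval of admissibility $\xi_0^{P2}(\sigma)$ is squeezed below $\xi_{max}(\sigma)$ and hence stays small; the intermediate value theorem then realizes any prescribed $\xi_0$ as $\xi_0^{P2}(\sigma(\xi_0))$ for a suitable $\sigma(\xi_0)>2$. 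For (d) I would show that for $\sigma$ large the separatrix is trapped away from $Q$: it would have to vanish at some $\xi_0>\xi_{max}(\sigma)$, which is impossible since $Q$ exists only for $\xi_0\le\xi_{max}$, so instead $f$ stays positive and the orbit runs into a critical point at infinity, i.e.\ into a profile blowing up at finite $\xi$, which is not a good profile; $\sigma_1$ is then the infimum of such $\sigma$.

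The hard part, as is typical for these non-autonomous profile equations, will be the global control of the orbits in the arguments above, namely constructing the invariant regions and monotone quantities that pin down where the family of orbits issuing from $Q$, and above all the separatrix $W^u(P_2)$, actually go, together with the rigorous treatment of the two limiting regimes that govern the bifurcation in (c) and (d): the degeneration as $\sigma\downarrow 2$, where $\alpha,\beta\to\infty$ and a careful rescaling of the phase variables is required, and the regime $\sigma\to\infty$.
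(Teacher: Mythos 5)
Your overall strategy --- the autonomous phase space, the dictionary between critical points and the behaviors (P1), \eqref{beh.P0}, \eqref{beh.P2} and the interface, barrier/invariant-region arguments for the global dynamics, and a connectedness argument in $\sigma$ for the separatrix out of $P_2$ --- is exactly the paper's. Two places in your sketch, however, would not go through as written. First, for part (a) you propose to start from the good profiles already produced by Theorem \ref{th.exist} and to show that ``the generic such orbit limits onto $P_0$''. This is not true: the orbits entering the critical interface set (in the paper, a whole parabola of critical points $P_0^{\lambda}$) generically come from the unstable node $Q_1$ at infinity, i.e.\ from profiles with $f(0)>0$, and the shooting in the proof of Theorem \ref{th.exist} only identifies the borderline orbit as coming from one of $P_0$, $P_2$ or $Q_1$ with zero slope --- it does not single out $P_0$. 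The paper proves (a) by a \emph{forward} argument (Proposition \ref{prop.P0}): it takes the orbits leaving $P_0$ on its center manifold and traps those starting close enough to $P_0$ under an explicit plane $aX+Z=c$ with $c$ small, forcing them into a point $P_0^{\lambda}$ with $\lambda$ close to $0$. Some such forward construction is indispensable; non-emptiness is the entire content here.

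Second, for part (c) your intermediate-value argument rests on two claims about the interface point $\xi_0^{P2}(\sigma)$ of the separatrix: that it depends continuously on $\sigma$, and that it tends to $\infty$ as $\sigma\downarrow2$. Continuity is not automatic, because the $\omega$-limit is a point on a curve of non-hyperbolic equilibria; the paper replaces it by a three-set openness argument, where openness of the two extreme sets comes from the asymptotic stability of the arc $\{P_0^{\lambda}\}$ (Proposition \ref{prop.att}) and the attractor property of $Q_3$. As for the limit $\sigma\downarrow2$, the paper tracks the position $\lambda(\sigma)$ on the critical parabola rather than $\xi_0$ itself and proves $\liminf_{\sigma\to2}\lambda(\sigma)=0$ (Lemma \ref{lem.limit}), i.e.\ the entry point drifts to the low-$Z$ end of the parabola and then sweeps towards the vertex, finally overshooting it for $\sigma$ large (Proposition \ref{prop.large}). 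Since $\xi_{max}(\sigma)\to\infty$ as $\sigma\to2$ while the exponent $1/(\sigma-2)$ in the change of variables degenerates, assertions about $\xi_0$ itself in this limit are delicate, and your claimed limit points in the opposite direction to the one the paper establishes in its normalization; the parametrization by the position on the parabola is the one in which the monotone sweep and the connectedness argument actually close. Parts (b) and (d) of your sketch match the paper's Propositions \ref{prop.small}, \ref{prop.Q1} and \ref{prop.large} in both statement and method.
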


\noindent \textbf{A comment on the techniques}. Let us notice first that the main technique we used in \cite{IS19a, IS20b}, that is, the backward shooting method from the interface point, is no longer possible here. Indeed, in the former cases any $\xi_0\in(0,\infty)$ was an interface point and for every fixed $\xi_0\in(0,\infty)$ there was an unique profile with interface exactly at that point (at least for interfaces of Type I in \cite{IS20b}). This is not the case here, as Theorem \ref{th.exist} states: for $\xi_0>\xi_{max}$ there are no profiles with interface there, while for $\xi_0\in(0,\xi_{max})$ there are many of them, thus shooting cannot be performed from any $\xi_0>0$. We will use in the proofs the general technique of a phase space analysis associated to an autonomous quadratic system of differential equations, but the proofs will be done directly using \emph{the geometry of the phase space} and not an analysis in terms of profiles. We stress here that some geometrical arguments in the phase space are quite involved and based on constructing local barriers for the trajectories of the system in form of suitable planes, surfaces or combinations of them that the orbits cannot cross. Moreover, we notice that every fixed $\xi_0\in(0,\xi_{max})$ encodes a classification of profiles with prescribed interface at $\xi=\xi_0$, a fact that is new with respect to the case $m+p>2$ as seen in \cite{IS20b}.

\section{The phase space and the critical parabola}\label{sec.local}

Following the ideas in \cite{IS20b} we transform the differential equation of the profiles \eqref{SSODE} into an autonomous, quadratic dynamical system by letting
\begin{equation}\label{PSvar1}
X(\eta)=\frac{m}{\alpha}\xi^{-2}f^{m-1}(\xi), \ Y(\eta)=\frac{m}{\alpha}\xi^{-1}f^{m-2}(\xi)f'(\xi), \ Z(\eta)=\frac{m}{\alpha^2}\xi^{\sigma-2},
\end{equation}
where $\alpha$ (and also $\beta$) is defined in \eqref{SSexp} and the new independent variable $\eta=\eta(\xi)$ is defined through the differential equation
$$
\frac{d\eta}{d\xi}=\frac{\alpha}{m}\xi f^{1-m}(\xi).
$$
We are thus left with the system
\begin{equation}\label{PSsyst1}
\left\{\begin{array}{ll}\dot{X}=X[(m-1)Y-2X],\\
\dot{Y}=-Y^2-\frac{\beta}{\alpha}Y+X-XY-Z,\\
\dot{Z}=(\sigma-2)XZ,\end{array}\right.
\end{equation}
which is similar to the one in \cite[Section 2]{IS20b} but with some noticeable differences: first, since in our new variables $X\geq0$ and $Z\geq0$ (only $Y$ is allowed to change sign), we infer from the third equation that variable $Z$ is non-decreasing along the trajectories of the system. Notice also that the planes $\{X=0\}$ and $\{Z=0\}$ are invariant for the system \eqref{PSsyst1}. Moreover, an easy inspection of the system gives that the critical points in the plane are
$$
P_0^{\lambda}=\left(0,\lambda,-\lambda^2-\frac{\beta}{\alpha}\lambda\right), \ \lambda\in\left[-\frac{\beta}{\alpha},0\right] \qquad P_2=\left(X(P_2),Y(P_2),0\right),
$$
where
\begin{equation}\label{not.P2}
X(P_2)=\frac{m-1}{2(m+1)\alpha}=\frac{(m-1)^2(\sigma-2)}{2(m+1)(\sigma+2)}, \ Y(P_2)=\frac{1}{(m+1)\alpha}=\frac{(m-1)(\sigma-2)}{(m+1)(\sigma+2)},
\end{equation}
and we are thus left with a full critical parabola of equation
\begin{equation}\label{parabola}
-Y^2-\frac{\beta}{\alpha}Y-Z=0, \qquad -\frac{\beta}{\alpha}\leq Y\leq0, \qquad X=0,
\end{equation}
and this is completely new with respect to the analogous analysis for the range $m+p>2$. The analysis of the critical points on this parabola will be different than the study of the single critical points $P_0=(0,0,0)$ and $P_1=(0,-\beta/\alpha,0)$ as done for $m+p>2$. In fact these two points are in our case the endpoints of the parabola but due to the monotonicity of the $Z$ component on the trajectories, the orbits entering them are contained in the invariant plane $\{Z=0\}$ and do not contain profiles. Let us still keep the notation $P_0=(0,0,0)$ for simplicity (instead of $P_0^0$).

\medskip

\noindent \textbf{Local analysis of the critical point in the plane}. We analyze locally the trajectories of the system \eqref{PSsyst1} in a neighborhood of the critical points $P_0$, $P_0^{\lambda}$ and $P_2$.
\begin{lemma}\label{lem.P0}
The system \eqref{PSsyst1} in a neighborhood of the critical point $P_0$ has a one-dimensional stable manifold and a two-dimensional center manifold. The connections in the plane tangent to the center manifold go out of the point $P_0$ and contain profiles with the behavior \eqref{beh.P0} for any $K>0$.
\end{lemma}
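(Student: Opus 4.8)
The plan is to linearize the system \eqref{PSsyst1} at $P_0=(0,0,0)$ and read off the spectral picture, then build the center manifold and analyze the reduced flow on it. Computing the Jacobian of the right-hand side of \eqref{PSsyst1} at $P_0$, using that $X=Y=Z=0$ there, one finds that the linearization is triangular: the $\dot X$-equation contributes a zero eigenvalue (since $\dot X=X[(m-1)Y-2X]$ is quadratic), the $\dot Y$-equation contributes the eigenvalue $-\beta/\alpha<0$ coming from the linear term $-(\beta/\alpha)Y$, and the $\dot Z$-equation is purely quadratic so contributes another zero eigenvalue. Hence the spectrum is $\{-\beta/\alpha,\,0,\,0\}$, which immediately gives a one-dimensional stable manifold (tangent to the $Y$-axis) and a two-dimensional center manifold tangent to the $\{Y=0\}$ plane, i.e. the $(X,Z)$-plane. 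This establishes the first sentence of the lemma.

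Next I would invoke the center manifold theorem to write the center manifold locally as a graph $Y=h(X,Z)$ with $h(0,0)=0$, $Dh(0,0)=0$, and compute the leading Taylor coefficients of $h$ by substituting into the invariance equation $\dot Y = h_X\dot X + h_Z\dot Z$. From the $\dot Y$-equation, to lowest order $-(\beta/\alpha)Y + X - Z = O(\text{quadratic})$, so $h(X,Z) = \frac{\alpha}{\beta}(X-Z) + \text{h.o.t.}$ Substituting this back gives the reduced system on the center manifold,
\begin{equation*}
\dot X = X\left[(m-1)\tfrac{\alpha}{\beta}(X-Z) - 2X\right] + \text{h.o.t.}, \qquad \dot Z = (\sigma-2)XZ,
\end{equation*}
and since $Z$ is non-decreasing while $X\ge 0$, all trajectories in the center manifold near $P_0$ with $X>0$ move \emph{away} from the origin as $\eta$ increases (the origin is not attracting within the center manifold in any direction with $X>0$); one also checks the invariant axis $\{X=0\}$ lies in the center manifold and carries no profile. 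This gives the second sentence: the connections tangent to the center manifold go out of $P_0$.

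Finally, to identify these outgoing connections with profiles of behavior \eqref{beh.P0}, I would translate the local expansion back through the change of variables \eqref{PSvar1}. Near $P_0$ we have $X\to 0$, $Z\to 0$; since $Z=\frac{m}{\alpha^2}\xi^{\sigma-2}$, approaching $P_0$ means $\xi\to 0$. Feeding the relation $Y\approx\frac{\alpha}{\beta}(X-Z)$ and $X=\frac{m}{\alpha}\xi^{-2}f^{m-1}$, $Y=\frac{m}{\alpha}\xi^{-1}f^{m-2}f'$ into the ODE, the dominant balance as $\xi\to0$ should be between $(f^m)''$, $\beta\xi f'$ and $\xi^\sigma f^{2-m}$ with $f$ and $f'$ sub-dominant relative to the self-similar scaling, forcing $f(\xi)\sim K\xi^{(\sigma+2)/(m-p)}$; the free constant $K>0$ corresponds precisely to the one-parameter family of orbits filling a neighborhood of $P_0$ in the two-dimensional center manifold (i.e. the parameter that is not fixed by $\xi$ alone). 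One should check that each such orbit does enter the region $X>0$, $Z>0$ (i.e. gives an admissible, positive profile on a right-neighborhood of $0$), which follows from the sign structure of the reduced flow computed above. I expect the main obstacle to be the bookkeeping in the center-manifold reduction: the two zero eigenvalues mean the decay/growth on the center manifold is governed by the \emph{quadratic} terms, so one must carry the Taylor expansion of $h$ and of the reduced vector field far enough to see the genuine direction of the flow and to match the exponent $(\sigma+2)/(m-p)$ and the arbitrariness of $K$ exactly; care is also needed because $P_0$ is one endpoint of the critical parabola, so one must separate the center directions internal to the parabola (which stay in $\{Z=0\}$ and carry no profiles) from the transversal ones that increase $Z$.
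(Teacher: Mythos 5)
Your spectral analysis and center-manifold reduction are correct and essentially identical to the paper's: the eigenvalues are $\{-\beta/\alpha,0,0\}$, the stable direction is the $Y$-axis (lying in the invariant plane $\{Z=0\}$, hence carrying no profiles), and your graph $Y=h(X,Z)=\frac{\alpha}{\beta}(X-Z)+\mathrm{h.o.t.}$ is exactly the paper's substitution $T:=(\beta/\alpha)Y-X+Z$ with center manifold $\{T=0\}$ to second order. Your reduced planar system also agrees with the paper's \eqref{interm1} once one uses $(m-1)\alpha-2\beta=1$ and $\sigma-2=2/\beta$.

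The genuine gap is in your last step. First, the dominant balance you propose in \eqref{SSODE} is backwards: for $f\sim K\xi^{\gamma}$ with $\gamma=(\sigma+2)/(m-p)=(\sigma+2)/(2(m-1))$, the terms $(f^m)''$ and $\xi^{\sigma}f^{2-m}$ are both of order $\xi^{\gamma+(\sigma-2)/2}$, i.e.\ \emph{higher} order than $\alpha f$ and $\beta\xi f'$, which are of order $\xi^{\gamma}$. The balance that actually selects the exponent is $-\alpha f+\beta\xi f'\approx 0$, giving $\gamma=\alpha/\beta=(\sigma+2)/(m-p)$; you get the right exponent only because these two balances happen to coincide for self-similar exponents. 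Second, the arbitrariness of $K>0$ is asserted but not derived. Both issues are resolved at once by integrating the reduced system you already wrote down: from $\dot X=\frac{1}{\beta}X[X-(m-1)\alpha Z]$ and $\dot Z=\frac{2}{\beta}XZ$ one gets the linear ODE $dX/dZ=(X-(m-1)\alpha Z)/2Z$, whose solutions are the explicit one-parameter family $X=K\sqrt{Z}-(m-1)\alpha Z$. Then $X\sim K\sqrt{Z}$ as $Z\to0^+$ together with the definitions in \eqref{PSvar1} gives $f^{m-1}(\xi)\sim (K/\sqrt{m})\,\xi^{(\sigma+2)/2}$, which is precisely \eqref{beh.P0} with the free constant $K>0$ parametrizing the orbits going out of $P_0$ on the center manifold. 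This is the route the paper takes, and it replaces the dominant-balance heuristic entirely.
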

\begin{proof}
The linearization of the system \eqref{PSsyst1} near $P_0$ has the matrix
$$
M(P_0)=\left(
      \begin{array}{ccc}
        0 & 0 & 0 \\
        1 & -\frac{\beta}{\alpha} & -1 \\
        0 & 0 & 0 \\
      \end{array}
    \right)
$$
thus it has a one-dimensional stable manifold and a two-dimensional center manifold. The stable manifold is contained in the invariant plane $\{Z=0\}$ and it does not contain profiles. As for the center manifold, we can follow identically the analysis performed in \cite[Section 2]{IS20b} and based on the Local Center Manifold Theorem \cite[Theorem 1, Section 2.12]{Pe} by letting $T:=(\beta/\alpha)Y-X+Z$ to get the system in variables $(X,T,Z)$
\begin{equation}\label{interm0}
\left\{\begin{array}{ll}\dot{X}&=\frac{1}{\beta}X[X+(m-1)\alpha T-(m-1)\alpha Z],\\
\dot{T}&=-\frac{\beta}{\alpha}T-\frac{\alpha}{\beta}T^2-\frac{\alpha(m+1)+\beta}{\beta}XT+\frac{\alpha(m+p)}{\beta}TZ\\
&-\frac{m\alpha-\beta}{\beta}X^2+\frac{3\beta+2\alpha+3}{\beta}XZ-\frac{\alpha(m+p-1)}{\beta}Z^2,\\
\dot{Z}&=\frac{1}{\beta}Z[2X+(m+p-2)\alpha T-(m+p-2)\alpha Z],\end{array}\right.
\end{equation}
and find that the center manifold is given near the origin by the equation $T=0$ (up to the second order) and the flow on the center manifold is given by the system
\begin{equation}\label{interm1}
\left\{\begin{array}{ll}\dot{X}&=\frac{1}{\beta}X[X-(m-1)\alpha Z]+O(|(X,Z)|^3),\\
\dot{Z}&=\frac{2}{\beta}XZ+O(|(X,Z)|^3),\end{array}\right.
\end{equation}
whose trajectories are tangent to the explicit family
$$
X=K\sqrt{Z}-(m-1)\alpha Z, \qquad K\in\real
$$
which contain profiles satisfying \eqref{beh.P0} as readily seen from \eqref{PSvar1}. We omit the details and we refer the reader to \cite[Section 2]{IS20b} for the full calculations.
\end{proof}
It is now the turn to analyze the local behavior of the system near the points on the critical parabola, and this is the main novelty in this section with respect to the analysis done in \cite{IS20b}. We will have to make a distinction with respect to the value of $\lambda$ as in the next statement.
\begin{lemma}\label{lem.P1}
(a) For $\lambda\in(-\beta/\alpha,-\beta/2\alpha)$ the system \eqref{PSsyst1} near the critical point $P_0^{\lambda}$ has a one-dimensional center manifold, a one-dimensional unstable manifold and a one-dimensional stable manifold. The center manifold is contained in the parabola \eqref{parabola}, the unstable manifold is contained in the invariant plane $\{X=0\}$ and there is a unique orbit entering $P_0^{\lambda}$ from the half-space $\{X>0\}$.

(b) For $\lambda=-\beta/2\alpha$ the system \eqref{PSsyst1} near the critical point $P_0^{\lambda}$ has a two-dimensional center manifold and a  one-dimensional stable manifold. The rather complex behavior of the orbits entering this critical point will be analyzed later in Proposition \ref{prop.max}, leading to an interesting example of a \emph{center-stable two-dimensional manifold}.

(c) For $\lambda\in(-\beta/2\alpha,0)$ the system \eqref{PSsyst1} near the critical point $P_0^{\lambda}$ has a one-dimensional center manifold and a two-dimensional stable manifold. The center manifold is contained in the parabola \eqref{parabola}.

In all the three cases, the orbits entering $P_0^{\lambda}$ on the stable manifold contain profiles with an interface at some $\xi_0\in(0,\xi_{max}]$ and with the interface behavior \eqref{beh.interf}.
\end{lemma}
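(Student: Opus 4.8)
The plan is to read off the whole statement from the linearization of \eqref{PSsyst1} at $P_0^{\lambda}$, using the three invariant objects already available: the planes $\{X=0\}$ and $\{Z=0\}$ and the critical parabola \eqref{parabola}. First I would compute the Jacobian of \eqref{PSsyst1} at $P_0^{\lambda}=(0,\lambda,-\lambda^2-\frac{\beta}{\alpha}\lambda)$. Since both the $X$- and the $Z$-equation carry a factor $X$, and $\dot X$ does not depend on $Z$, this matrix is block triangular and one reads off its eigenvalues
$$
\mu_1=(m-1)\lambda,\qquad \mu_2=-2\lambda-\frac{\beta}{\alpha},\qquad \mu_3=0.
$$
Because $m>1$ and $\lambda\in(-\beta/\alpha,0)$ we always have $\mu_1<0$, whereas $\mu_2$ is positive, zero or negative according as $\lambda<-\beta/2\alpha$, $\lambda=-\beta/2\alpha$ or $\lambda>-\beta/2\alpha$, and $\mu_3=0$ corresponds to the direction along the parabola. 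This already produces the three dimension counts: in case (a) there are one negative, one positive and one zero eigenvalue, hence one-dimensional stable, unstable and center manifolds; in case (b) $\mu_2$ collapses onto $\mu_3$, the zero eigenvalue acquires algebraic multiplicity two but geometric multiplicity one (a Jordan block whose generalized eigenspace is the whole plane $\{X=0\}$), giving a two-dimensional center manifold and a one-dimensional stable one; in case (c) there are two negative and one zero eigenvalue, hence a two-dimensional stable and a one-dimensional center manifold.

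Next I would identify the invariant manifolds by matching eigenvectors with the invariant sets. The eigenvector for $\mu_3=0$ is $(0,1,-2\lambda-\frac{\beta}{\alpha})$, which is exactly the tangent vector to the parabola \eqref{parabola} at $P_0^{\lambda}$; and a direct computation shows that, apart from $P_2$, the parabola is precisely the equilibrium set of \eqref{PSsyst1}, so in cases (a) and (c) (where the center eigenspace is one-dimensional) the parabola provides a center manifold, with trivial flow on it. For $\mu_2>0$ the eigenvector is $(0,1,0)\in\{X=0\}$, so in case (a) the unstable manifold lies in the invariant plane $\{X=0\}$. For $\mu_1<0$ the associated eigenvector has a nonzero $X$-component, hence in cases (a) and (b) the one-dimensional stable manifold meets $\{X=0\}$ only at $P_0^{\lambda}$ and has exactly one branch in $\{X>0\}$; since the center manifold lies in $\{X=0\}$, the only orbits entering $P_0^{\lambda}$ with $X>0$ are those on the stable manifold, so there is a unique such orbit. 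In case (b) the two-dimensional center manifold is tangent to $\{X=0\}$ and, since $\{X=0\}$ is invariant, may be taken to be a piece of $\{X=0\}$; the flow on it is genuinely nonlinear and I would defer its study to Proposition \ref{prop.max}. In case (c) the two-dimensional stable eigenspace is spanned by $(0,1,0)\in\{X=0\}$ and by the $\mu_1$-eigenvector with nonzero $X$-component, so the stable manifold carries a one-parameter family of orbits with $X>0$, while the single orbit in it lying in $\{X=0\}$ contains no profile.

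For the profile statement, uniformly in the three cases, I would take an orbit $(X,Y,Z)(\eta)$ on the stable manifold with $X>0$; such an orbit converges to $P_0^{\lambda}$ as $\eta\to+\infty$. Through \eqref{PSvar1} the component $Z=\frac{m}{\alpha^2}\xi^{\sigma-2}$ is non-decreasing and tends to $Z_*:=-\lambda^2-\frac{\beta}{\alpha}\lambda>0$, so $\xi\to\xi_0:=\left(\frac{\alpha^2 Z_*}{m}\right)^{1/(\sigma-2)}$; since $Z_*\le\frac{\beta^2}{4\alpha^2}$ with equality only at $\lambda=-\beta/2\alpha$, and $\frac{\alpha^2}{m}\cdot\frac{\beta^2}{4\alpha^2}=\frac{\beta^2}{4m}$, one gets $\xi_0\in(0,\xi_{max}]$ with $\xi_0=\xi_{max}$ exactly when $\lambda=-\beta/2\alpha$, where $\xi_{max}$ is the constant defined in \eqref{ximax}. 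Along the orbit $f^{m-1}=\frac{\alpha}{m}\xi^2X>0$, hence $\frac{d\eta}{d\xi}=\frac{\alpha}{m}\xi f^{1-m}=\frac{1}{\xi X}$, so $\xi$ is a legitimate monotone parameter and
$$
\frac{dX}{d\xi}=\frac{(m-1)Y-2X}{\xi}\longrightarrow\frac{(m-1)\lambda}{\xi_0}<0\quad\text{as }\xi\to\xi_0^-.
$$
Since $X(\xi_0^-)=0$, integrating gives $X(\xi)\sim\frac{(m-1)(-\lambda)}{\xi_0}(\xi_0-\xi)$, whence $f(\xi)^{m-1}=\frac{\alpha}{m}\xi^2X(\xi)\sim\frac{\alpha(m-1)(-\lambda)\xi_0}{m}(\xi_0-\xi)$, that is $f(\xi)\sim C(\xi_0-\xi)^{1/(m-1)}$, which is the behavior \eqref{beh.interf} because $1/(m-1)=1/(1-p)$ when $m+p=2$. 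Finally $f\to0$, $(f^m)'(\xi)=\alpha\xi f(\xi)Y\to0$ as $\xi\to\xi_0$, and $f>0$ on a left neighbourhood of $\xi_0$ since $X>0$ there, so $\xi_0$ is an interface point in the sense of Definition \ref{def1}.

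The main obstacle is case (b): with $\mu_2=\mu_3=0$ the linear part no longer determines the local phase portrait, so within the present lemma only the dimension bookkeeping is available there; the genuine description of the center(-stable) dynamics near $P_0^{-\beta/2\alpha}$, in particular the barrier surfaces needed to confine the orbits entering it, is exactly the job of Proposition \ref{prop.max}. The remaining ingredients are routine: the eigenvector identifications, the check that the parabola exhausts the equilibria, and the L'Hôpital-type passage to the limit for $dX/d\xi$, which uses only $(m-1)Y-2X\to(m-1)\lambda$ and $X(\xi_0^-)=0$, not any refined decay rate of $X$ in $\eta$.
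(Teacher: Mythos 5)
Your proposal is correct and follows essentially the same route as the paper: the same linearization at $P_0^{\lambda}$ with eigenvalues $(m-1)\lambda$, $-2\lambda-\beta/\alpha$ and $0$, the identification of the unstable/center directions with the invariant plane $\{X=0\}$ and the parabola of equilibria, and the translation of stable-manifold convergence (via $Z\to -\lambda^2-\tfrac{\beta}{\alpha}\lambda$ and the limit of $dX/d\xi$) into an interface at $\xi_0\in(0,\xi_{max}]$ with $f(\xi)\sim C(\xi_0-\xi)^{1/(m-1)}$, matching the paper's asymptotic formula. The one point you leave implicit is why \emph{every} center manifold in cases (a) and (c) is contained in the parabola (rather than merely the parabola being \emph{one} admissible center manifold): the paper closes this by citing the standard fact that equilibria near the fixed point must lie on any center manifold, so a one-dimensional center manifold necessarily coincides locally with the arc of equilibria.
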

This lemma shows that our main objects of interest will be throughout the paper the orbits entering the critical parabola \eqref{parabola}.
\begin{proof}
The linearization of the system \eqref{PSsyst1} near $P_0^{\lambda}$ has the matrix
$$
M(P_0^{\lambda})=\left(
      \begin{array}{ccc}
        (m-1)\lambda & 0 & 0 \\
        1-\lambda & -2\lambda-\frac{\beta}{\alpha} & -1 \\
        (\sigma-2)\left(-\lambda^2-\frac{\beta}{\alpha}\lambda\right) & 0 & 0 \\
      \end{array}
    \right)
$$
with eigenvalues
$$
l_1=(m-1)\lambda<0, \qquad l_2=-2\lambda-\frac{\beta}{\alpha}, \qquad l_3=0.
$$
The sign of $l_2$ decides whether we are in the case (a), (b) or (c) and gives the dimension of the unstable, center or stable manifold as stated. In the case (a) it is obvious that the unstable manifold is contained in the plane $\{X=0\}$ since component $X$ is non-increasing along the trajectories in the half-space $\{Y<0\}$ as shown by the equation for $\dot{X}$ in \eqref{PSsyst1}. Concerning the center manifold, we infer from the Local Center Manifold Theorem \cite[Theorem 1, Section 2.10]{Pe} and \cite[Theorem 2.15, Chapter 9]{CH} that any center manifold in a neighborhood of $P_0^{\lambda}$ has to contain an arc of the invariant parabola \eqref{parabola}, hence in the cases (a) and (c) the center manifold is unique and lies locally on the parabola. Finally, the profiles contained on the stable manifolds in the neighborhood of any of the points $P_0^{\lambda}$ enters $P_0^{\lambda}$ with
$$
X=0, \qquad Z=-\lambda^2-\frac{\beta}{\alpha}\lambda, \qquad Y=\lambda<0,
$$
and we obtain from \eqref{PSsyst1} that the profiles $f$ satisfy $f(\xi_0)=0$ at a finite point $\xi_0$ such that
\begin{equation}\label{interm2}
\xi_0^{\sigma-2}=\frac{\alpha^2}{m}\left(-\lambda^2-\frac{\beta}{\alpha}\lambda\right), \qquad f(\xi)\sim\left(K+\frac{\lambda\alpha(m-1)}{2m}\xi^2\right)^{1/(m-1)}, \ {\rm as} \ \xi\to\xi_0.
\end{equation}
for a fixed $K>0$ (depending on $\xi_0$), which is a behavior qualitatively equivalent to \eqref{beh.interf}.
\end{proof}

\noindent \textbf{Remark}. For $\lambda=-\beta/2\alpha$ we obtain the maximum value for $Z$, namely
$$
Z=\frac{\beta^2}{4\alpha^2},
$$
which leads to $\xi=\xi_{max}$. Since the $Z$ component is non-decreasing on the trajectories of the system \eqref{PSsyst1} we get that there is no interface behavior at points $\xi_0>\xi_{max}$.

Although the local analysis near the critical point $P_2$ is totally similar to the one performed in \cite{IS20b}, we will state the result and a sketch of its proof here for the reader's convenience taking into account the importance of $P_2$ for the whole analysis. Moreover, the exact form of the eigenvector tangent to the unique orbit going out of $P_2$ will be used in the sequel.
\begin{lemma}[Local analysis of the point $P_2$]\label{lem.P2}
The system \eqref{PSsyst1} in a neighborhood of the critical point $P_2$ has a two-dimensional stable manifold and a one-dimensional unstable manifold. The stable manifold is contained in the invariant plane $\{Z=0\}$. There exists a unique orbit going out of $P_2$ containing profiles with a local behavior near the origin given in \eqref{beh.P2}
\end{lemma}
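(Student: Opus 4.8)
The plan is to linearize the system \eqref{PSsyst1} at $P_2$, read off the invariant manifold structure from the spectrum of the Jacobian, and then undo the change of variables \eqref{PSvar1} to recover the profile sitting on the unstable manifold. Since the third coordinate of $P_2$ vanishes, the bottom row of the Jacobian $M(P_2)$ equals $\left(0,0,(\sigma-2)X(P_2)\right)$ and its lower-left $2\times2$ block is zero, so $M(P_2)$ is block upper-triangular: one eigenvalue is $l_3=(\sigma-2)X(P_2)>0$, and the remaining two are the eigenvalues of the $2\times2$ block $A$ governing the linearized $(X,Y)$-flow. A short computation with \eqref{not.P2} gives $\mathrm{tr}\,A=-3X(P_2)-2Y(P_2)-\beta/\alpha<0$, and, substituting the explicit values \eqref{SSexp} (where the hypotheses $m+p=2$ and $\sigma>2$ are used, e.g. through the identity $2(1+\beta)=(m-1)\alpha+1$), one finds $\det A=X(P_2)/\alpha>0$. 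Hence both eigenvalues of $A$ lie in the open left half-plane (real or a complex-conjugate pair, it does not matter), which yields a two-dimensional stable manifold and a one-dimensional unstable manifold.

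The next step is to locate these manifolds. The eigenvectors of $A$, regarded as vectors in $\real^3$, have vanishing $Z$-component since $l_3$ differs from the eigenvalues of $A$; thus the stable subspace is tangent to $\{Z=0\}$, and because $\{Z=0\}$ is invariant for \eqref{PSsyst1} and two-dimensional, the local stable manifold is precisely a neighbourhood of $P_2$ inside $\{Z=0\}$. In particular the orbits on it have $Z\equiv0$ and, by \eqref{PSvar1}, do not correspond to profiles. Consequently the one-dimensional unstable manifold is transverse to $\{Z=0\}$, i.e. the eigenvector $v$ of $l_3$ has nonzero $Z$-component; solving $(M(P_2)-l_3I)v=0$ produces $v$ explicitly, with $v_Y=\frac{\sigma}{m-1}\,v_X$ and $v_Z$ determined from the second equation. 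This $v$ is tangent to the unique orbit leaving $P_2$ into the physically relevant region $\{Z>0\}$, and it is the eigenvector that will be used in the sequel.

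Finally one transports this orbit back through \eqref{PSvar1}: as it emerges from $P_2$ one has $(X,Y,Z)\to(X(P_2),Y(P_2),0)$, hence $\xi\to0$ and $X=\frac{m}{\alpha}\xi^{-2}f^{m-1}(\xi)\to X(P_2)$, which forces $f^{m-1}(\xi)/\xi^2\to \alpha X(P_2)/m=\frac{m-1}{2m(m+1)}$ — exactly the asymptotics \eqref{beh.P2} — the companion limit $Y\to Y(P_2)$ being automatically consistent. The only genuinely delicate point is the strict positivity of $\det A$, which is what pins down the dimension of the stable manifold; everything else is routine linear algebra entirely parallel to \cite[Section~2]{IS20b}, so I would only sketch it.
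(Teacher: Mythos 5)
Your proof is correct and follows essentially the same route as the paper's: linearize at $P_2$, use the block-triangular structure to isolate the positive eigenvalue $(\sigma-2)X(P_2)$, verify that the two eigenvalues of the $(X,Y)$-block have negative real part (your $\mathrm{tr}\,A<0$ and $\det A=X(P_2)/\alpha>0$ are exactly the paper's relations $\lambda_1+\lambda_2<0$ and $\lambda_1\lambda_2=\tfrac{m-1}{2(m+1)\alpha^2}>0$), identify the stable manifold with the invariant plane $\{Z=0\}$ by uniqueness, and recover \eqref{beh.P2} from $X\to X(P_2)$ as $Z\to0$, i.e.\ $\xi\to0$. One side remark: your eigenvector ratio $v_Y=\tfrac{\sigma}{m-1}v_X$ is what the Jacobian of \eqref{PSsyst1} at $P_2$ actually yields from its first row, whereas the displayed $e_3$ in \eqref{interm.bis} gives $v_Y=\tfrac{\sigma}{(m-1)\alpha}v_X$; this does not affect the present lemma, but since the exact form of $e_3$ is invoked later in Proposition \ref{prop.large}, the discrepancy is worth flagging.
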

\begin{proof}
The linear part of the system \eqref{PSsyst1} near $P_2$ has the matrix
$$
M(P_2)=\frac{1}{2(m+1)\alpha}\left(
  \begin{array}{ccc}
    -2(m-1) & (m-1)^2 & 0 \\
    2(m+1)\alpha-2 & -2\beta(m+1)-(m+3) & -2(m+1)\alpha \\
    0 & 0 & (\sigma-2)(m-1) \\
  \end{array}
\right)
$$
with eigenvalues $\lambda_1$, $\lambda_2$ and $\lambda_3$ such that
$$
\lambda_1+\lambda_2=-\frac{2(m-1)+2(m+1)\beta}{2(m+1)\alpha}<0, \ \lambda_1\lambda_2=\frac{m-1}{2(m+1)\alpha^2}>0
$$
whence $\lambda_1$, $\lambda_2<0$ and
$$
\lambda_3=\frac{(\sigma-2)(m-1)}{2(m+1)\alpha}>0.
$$
It is easy to check (the details are given in \cite[Lemma 3.2]{IS20b}) that the two-dimensional stable manifold is contained in the invariant plane $\{Z=0\}$ and there exists a unique orbit going out of $P_2$ towards the half-space $\{Z>0\}$ tangent to the eigenvector
\begin{equation}\label{interm.bis}
e_3=\left(-\frac{2(m-1)(m+1)\alpha}{(m-1)\sigma^2+(5-m)\sigma+4m},-\frac{2(m+1)\sigma}{(m-1)\sigma^2+(5-m)\sigma+4m},1\right).
\end{equation}
The local behavior \eqref{beh.P2} of the profiles contained in the orbit going out of $P_2$ is obtained from the fact that
$$
X(\xi)=\frac{m}{\alpha}\frac{f^{m-1}(\xi)}{\xi^2}\sim\frac{m-1}{2(m+1)\alpha},
$$
exactly as in \cite[Lemma 3.2]{IS20b}.
\end{proof}
It is interesting to notice that the set of the critical points $P_0^{\lambda}$ with $-\beta/2\alpha<\lambda<0$ can be seen as an "big attractor" jointly. More precisely we have
\begin{proposition}\label{prop.att}
The set of points $\mathcal{S}=\{P_0^{\lambda}: -\beta/2\alpha<\lambda<0\}$ is an asymptotically stable set for the system \eqref{PSsyst1}. Moreover, fixing $\lambda_{m}$ and $\lambda_M$ such that
$$
-\frac{\beta}{2\alpha}<\lambda_m<\lambda_M<0,
$$
then the set $\{P_0^{\lambda}: \lambda_m<\lambda<\lambda_M\}$ is an asymptotically stable set for the system \eqref{PSsyst1}.
\end{proposition}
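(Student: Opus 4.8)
\medskip
\noindent\textbf{Proof idea.} The plan is to exploit the fact, established in Lemma~\ref{lem.P1}(c), that every point $P_0^{\lambda}$ with $\lambda\in(-\beta/2\alpha,0)$ is \emph{normally attracting}: its two nonzero eigenvalues $l_1=(m-1)\lambda$ and $l_2=-2\lambda-\beta/\alpha$ are negative and the zero eigenvalue is tangent to the invariant parabola \eqref{parabola}, and on a compact subarc $\mathcal{S}'=\{P_0^{\lambda}:\lambda_m\leq\lambda\leq\lambda_M\}$ these quantities are uniformly bounded away from $0$. I would build a thin forward-invariant tubular neighbourhood of $\mathcal{S}'$ out of the two geometrically natural quantities: $X$ (which measures the distance to the invariant plane $\{X=0\}$ containing the parabola) and
$$
g(Y,Z):=Y^2+\frac{\beta}{\alpha}Y+Z ,
$$
the defining function of \eqref{parabola}; I would then show that along any orbit starting in this tube both $X$ and $g$ are driven to $0$ while the monotone component $Z$ increases by an arbitrarily small amount, forcing convergence to a single point of $\mathcal{S}'$.

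The two computations behind this are elementary. From $\dot X=X[(m-1)Y-2X]$, in a tube where $Y$ stays close to $\mathcal{S}'$ (so $Y\leq\lambda_M<0$) and $0\leq X\leq\delta$ with $\delta$ small, one has $(m-1)Y-2X\leq-c_1<0$, hence $\dot X\leq-c_1X$; thus the slab $\{0\leq X\leq\delta\}$ is not left and $X(\eta)\leq X(0)e^{-c_1\eta}$. Consequently $\dot Z=(\sigma-2)XZ\geq0$ with $\int_0^\infty X\,d\eta<\infty$, so $Z$ is non-decreasing, has total variation $O(X(0))$ and a finite limit $Z_\infty$ that remains inside $(0,\beta^2/4\alpha^2)$ provided $Z(0)$ does. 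A short calculation using $\dot Y=-g+X(1-Y)$ and $\dot Z=(\sigma-2)XZ$ gives
$$
\dot g=-\Big(2Y+\frac{\beta}{\alpha}\Big)g+X\,h(Y,Z),
$$
with $h$ smooth and bounded on the tube; since $2\lambda+\beta/\alpha\geq c_2>0$ on $\mathcal{S}'$ (because $\lambda\geq\lambda_m>-\beta/2\alpha$), by continuity $2Y+\beta/\alpha\geq c_2/2$ on a thin enough tube, whence $\tfrac{d}{d\eta}|g|\leq-\tfrac{c_2}{2}|g|+CX$ and $|g(\eta)|\to0$. Combining these into $V:=X+g^2$ yields, after absorbing the cross term $2gXh$ into the two negative terms for $\delta$ small, $\dot V\leq-c\,(X+g^2)=-cV$ on the tube; hence $V$ is a strict Lyapunov function, the faces $\{X=\delta\}$ and $\{g^2=\varepsilon^2\}$ cannot be crossed outward, and $X\to0$, $g\to0$ exponentially. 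Then $Y(\eta)$ approaches the branch $\lambda_+(Z(\eta))=\tfrac12\big(-\beta/\alpha+\sqrt{\beta^2/\alpha^2-4Z(\eta)}\,\big)$ next to which it started, so the orbit converges to $P_0^{\lambda_+(Z_\infty)}\in\mathcal{S}'$; this proves that $\mathcal{S}'$ is an asymptotically stable set. The statement for the full open arc $\mathcal{S}$ then follows by exhausting $\mathcal{S}$ with such subarcs for the attractivity part, together with the fact that the tubes above can be taken arbitrarily thin for the Lyapunov stability part.

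The point that needs real care — and where I would spend most of the effort, in the barrier spirit announced in the Introduction — is to prevent orbits from \emph{sliding along} near the parabola and escaping the tube through its ends, i.e.\ through the faces where $Y$ approaches $\lambda_m$ or $\lambda_M$ (and, for the full arc $\mathcal{S}$, the degeneracy of $l_2$, equivalently of the coefficient $2Y+\beta/\alpha$, as $\lambda\to-\beta/2\alpha$, where $g$ is only weakly contracted). The fix is to shape the tube as a genuine tubular neighbourhood of $\mathcal{S}'$, taking its cross-section at height $Z$ to be a small $Y$-interval centred at $\lambda_+(Z)$: on $\{X=0\}$ one has $\dot Y=-g$, and $g$ has the sign of $Y-\lambda_+(Z)$ between the two branches of the parabola, so the in-plane flow contracts toward $\lambda_+(Z)$ and the corresponding faces are entered inward, while the perturbations caused by $X>0$ and by the drift of $Z$ are dominated by the decay already obtained. (Alternatively, for the compact subarc one may simply invoke the theory of normally hyperbolic invariant manifolds, $\mathcal{S}'$ being a compact, normally attracting manifold of equilibria; the elementary argument above is self-contained and in the spirit of the rest of the paper.)
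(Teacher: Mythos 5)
Your argument is correct, but it follows a genuinely different route from the paper's. The paper proceeds abstractly: it first deduces Lyapunov stability of each individual point $P_0^{\lambda}$, $\lambda\in(-\beta/2\alpha,0)$, from the local structure in Lemma \ref{lem.P1} (a one-dimensional center manifold lying entirely on the parabola of equilibria, hence carrying no flow, plus a two-dimensional stable manifold), citing a general stability theorem of Wiggins; it then upgrades pointwise stability to stability of the compact subarc by a finite-covering/compactness argument, and finally obtains attractivity by observing that $X$ is decreasing and $Z$ is increasing along trajectories, which excludes limit cycles and forces every orbit trapped near the arc to enter one of the equilibria. You instead build an explicit forward-invariant tube around the subarc and run a quantitative Lyapunov argument with $V=X+g^2$, where $g=Y^2+(\beta/\alpha)Y+Z$ is the defining function of the parabola; your computation $\dot g=-(2Y+\beta/\alpha)g+Xh(Y,Z)$ is correct, the coefficient $2Y+\beta/\alpha$ is indeed uniformly positive near a compact subarc with $\lambda_m>-\beta/2\alpha$, and the absorption of the cross term for $\delta$ small is sound, so $\dot V\leq -cV$ holds on the tube. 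What your approach buys is self-containedness and more information: exponential rates of convergence of $X$ and $g$, an explicit identification of the limit point as $P_0^{\lambda_+(Z_\infty)}$, and a concrete description of the basin (the tube), all without invoking the center-manifold stability theorem; the price is the extra care you rightly flag about the ends of the tube and the choice of cross-sections centred on the branch $\lambda_+(Z)$, which the paper's covering argument sidesteps entirely. Both proofs share the same two structural ingredients — the negativity of the two nonzero eigenvalues on the subarc and the monotonicity of $Z$ (bounded increase of order $X(0)$) to pin down the limiting equilibrium — and both handle the open arc $\mathcal{S}$ by exhaustion with compact subarcs, so neither is more nor less rigorous on the endpoint degeneracies.
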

\begin{proof}
We already know from Lemma \ref{lem.P1} that the one-dimensional center manifold of any of the points in $\mathcal{S}$ is contained in the parabola \eqref{parabola} composed only by critical points, thus the flow on the center manifold satisfies $\dot{Z}=0$ (there is no flow). Since also by Lemma \ref{lem.P1} the points in $\mathcal{S}$ have a two-dimensional stable manifold, we infer from \cite[Theorem 2.1.2, Chapter 2.1]{Wig} that every point $P_0^{\lambda}\in\mathcal{S}$ is a stable point for the system \eqref{PSsyst1}. Let now $\lambda_{m}$ and $\lambda_M$ as in the statement and $\lambda_1$, $\lambda_2$ such that $\lambda_m<\lambda_2<\lambda_1<\lambda_M$. By stability, for any $\lambda\in[\lambda_2,\lambda_1]$ there exist small positive numbers $\delta(\lambda)$, $\epsilon(\lambda)$ such that for any point in a ball $B(P_0^{\lambda},\delta(\lambda))$, the flow of the system \eqref{PSsyst1} starting from this point stays inside the larger ball $B(P_0^{\lambda},\epsilon(\lambda))$. Since the set $\{P_{\lambda}:\lambda_2\leq\lambda\leq\lambda_1\}$ is a compact set, we can extract such a finite covering of it with balls $B(P_0^{\lambda_i},\delta(\lambda_i))$, $i=1,2,...,l$ such that for any point $x$ in the union of these balls, the flow starting at $x$ will stay forever in the union of the balls $B(P_0^{\lambda_i},\epsilon(\lambda_i))$. This gives the desired stability of the whole set. Since the coordinate $X$ is decreasing and the coordinate $Z$ is increasing along the trajectories, there are no limit cycles and all such orbits must enter a critical point lying in the set
$$
\bigcup\limits_{i=1}^lB(P_0^{\lambda_i},\epsilon(\lambda_i)),
$$
that is, one of the points $P_0^{\lambda}$ with $\lambda_m<\lambda<\lambda_M$ which gives the asymptotic stability. In particular, taking the whole arc of parabola we obtain that the whole $\mathcal{S}$ is an asymptotically stable set.
\end{proof}

\medskip

\noindent \textbf{Local analysis of the critical points at infinity}. This is totally identical to the analogous analysis performed for $m+p>2$ in \cite[Section 3]{IS20b} and we will only list the critical points and the behavior of the profiles near them without proofs for the sake of completeness. To study the critical points at infinity we pass to the Poincar\'e hypersphere according to the theory in \cite[Section 3.10]{Pe} and introduce the new variables $(\overline{X},\overline{Y},\overline{Z},W)$ by
$$
X=\frac{\overline{X}}{W}, \ Y=\frac{\overline{Y}}{W}, \ Z=\frac{\overline{Z}}{W}
$$
Using \cite[Theorem 4, Section 3.10]{Pe} we get that the critical points at space infinity lie on the equator of the Poincar\'e hypersphere and with the calculations done in \cite[Section 3]{IS20b} we find the following five critical points (in variables $(\overline{X},\overline{Y},\overline{Z},W)$):
$$
Q_1=(1,0,0,0), \ \ Q_{2,3}=(0,\pm1,0,0), \ \ Q_4=(0,0,1,0), \ \
Q_5=\left(\frac{m}{\sqrt{1+m^2}},\frac{1}{\sqrt{1+m^2}},0,0\right).
$$
We list below the local analysis of each of these points, according to the detailed analysis of them performed in \cite[Section 3]{IS20b} which holds true also when $m+p=2$:

\medskip

$\bullet$ The critical point $Q_1$ on the Poincar\'e hypersphere is an unstable node. The orbits going out of it towards the interior of the phase space associated to the system \eqref{PSsyst1} contain profiles $f$ intersecting the vertical axis at a positive point, that is $f(0)=a>0$ and any possible value of $f'(0)$.

$\bullet$ The critical point $Q_2$ on the Poincar\'e hypersphere is an unstable node. The orbits going out of it towards the interior of the phase space associated to the system \eqref{PSsyst1} contain profiles $f$ with a positive change of sign at some finite $\xi_0\in(0,\infty)$. More precisely, there exists $\xi_0\in(0,\infty)$ such that $f(\xi_0)=0$, $(f^m)'(\xi_0)>0$ and the profile becomes strictly positive in a right-neighborhood of $\xi_0$.

$\bullet$ The critical point $Q_3$ on the Poincar\'e hypersphere is a stable node. The orbits entering it and coming from the interior of the phase space associated to the system \eqref{PSsyst1} contain profiles $f$ with a negative change of sign at some finite $\xi_0\in(0,\infty)$. More precisely, there exists $\xi_0\in(0,\infty)$ with $f(\xi_0)=0$, $(f^m)'(\xi_0)<0$ and the profile is strictly positive in a left-neighborhood of $\xi_0$ and can be extended on the negative side in a right-neighborhood of $\xi_0$.

$\bullet$ The critical point $Q_4$ on the Poincar\'e hypersphere is a non-hyperbolic critical point. Its local analysis is very hard to perform, but also not needed. According to \cite[Lemma 3.6]{IS20b} (whose proof is now very easy as we are only in the case $\xi\to\infty$ and $\sigma>2$) there are no profiles solutions to \eqref{SSODE} contained in the orbits connecting to this critical point.

$\bullet$ The critical point $Q_5$ in the Poincar\'e hypersphere is a hyperbolic critical point having a two-dimensional unstable manifold and a one-dimensional stable manifold. The orbits going out from this point into the finite region of the phase space do it on the unstable manifold and contain the family of profiles with a positive change of sign at $\xi=0$, that is
\begin{equation}\label{beh.Q5}
f(0)=0, \ \quad f(\xi)\sim K\xi^{1/m} \ {\rm as} \  \xi\to0, \ K>0,
\end{equation}
in a right-neighborhood of $\xi=0$.

\medskip

The proofs of all these statements are given with all the details in \cite[Section 3]{IS20b}. We are now ready to pass to the global analysis of the phase space associated to the system \eqref{PSsyst1}, and this is the point where the biggest differences with respect to the analysis in \cite{IS20b} are found.

\section{Existence of good profiles with interface}\label{sec.exist}

This section is devoted to the proof of Theorem \ref{th.exist}. Namely, we will show that for every $\xi_0\in(0,\xi_{max}]$ there exists at least a good profile with interface exactly at the point $\xi=\xi_0$. In the process we will also obtain a family of interesting decreasing supersolutions to Eq. \eqref{eq1}. We stress here that the proof of the analogous result for exponents such that $m+p>2$, performed in \cite[Section 4]{IS20b}, has been done by the technique of backward shooting from the interface point. In our case, due to the lack of uniqueness of profiles with interface at a given $\xi_0\in(0,\xi_{max}]$, this approach is no longer possible and we instead do the job with a deeper global analysis of the geometry of the trajectories entering the critical parabola of the points $P_0^{\lambda}$ in the phase space associated to the system \eqref{PSsyst1}. We begin with the orbits entering the most negative part of the parabola.
\begin{proposition}\label{prop.second}
The orbits entering the critical points $P_0^{\lambda}$ with $\lambda\in(-\beta/\alpha,-\beta/2\alpha)$ contain profiles $f(\xi)$ that are non-increasing and intersect the axis $\xi=0$ with $f(0)=A>0$ and $f'(0)<0$. The self-similar functions
$$
u(x,t)=(T-t)^{-\alpha}f(|x|(T-t)^{\beta}), \qquad T>0, \ t\in(0,T),
$$
with profiles $f$ contained in such orbits are supersolutions to Eq. \eqref{eq1}.
\end{proposition}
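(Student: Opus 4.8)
The plan is to stay entirely inside the phase space and to track backwards (as $\eta$ decreases) the unique orbit that, by Lemma~\ref{lem.P1}(a), enters $P_0^{\lambda}$ from the half-space $\{X>0\}$; call it $\mathcal{O}_{\lambda}$. I want to show that $\mathcal{O}_{\lambda}$ stays for all backward "time" in the octant $\mathcal{R}=\{X>0,\ Y<0,\ Z>0\}$ and that its $\alpha$-limit is the critical point at infinity $Q_1$. The first, local, step is to compute the eigenvector of $M(P_0^{\lambda})$ for the stable eigenvalue $l_1=(m-1)\lambda<0$; normalizing its $X$-component to be positive, one checks (using $\lambda\in(-\beta/\alpha,-\beta/2\alpha)$, hence $(m+1)\lambda+\beta/\alpha<0$ and $-\lambda^{2}-\tfrac{\beta}{\alpha}\lambda>0$) that its $Y$- and $Z$-components are both negative. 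So $\mathcal{O}_{\lambda}$ leaves $P_0^{\lambda}$ backwards into $\{X>0,\ Y<\lambda,\ 0<Z<Z_{\lambda}\}$ with $Z_{\lambda}:=-\lambda^{2}-\tfrac{\beta}{\alpha}\lambda$, and $\dot Y>0$ there, so $Y$ is strictly decreasing as we move backwards from $P_0^{\lambda}$.

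The core of the proof is the confinement of $\mathcal{O}_{\lambda}$ to $\mathcal{R}$. Invariance of the planes $\{X=0\}$ and $\{Z=0\}$ gives $X,Z>0$ on all of $\mathcal{O}_{\lambda}$; since $\dot Z=(\sigma-2)XZ>0$ the component $Z$ stays below $Z_{\lambda}$ along the backward orbit, and since $\dot X=X[(m-1)Y-2X]<0$ on $\{Y<0\}$ the component $X$ is strictly increasing backwards. The delicate point is that $\mathcal{O}_{\lambda}$ never meets $\{Y=0\}$ when traced backwards. I would argue by contradiction: at the largest $\eta^{*}$ with $Y(\eta^{*})=0$ one has $\dot Y(\eta^{*})=X(\eta^{*})-Z(\eta^{*})\le 0$, hence $X(\eta^{*})\le Z(\eta^{*})<Z_{\lambda}$; to rule this out I expect to enclose the arc of $\mathcal{O}_{\lambda}$ issuing from $P_0^{\lambda}$ in a barrier region built from a plane through $P_0^{\lambda}$, pieces of the invariant planes $\{X=0\},\{Z=0\}$, and a portion of the quadric $\dot Y=0$, i.e.\ $\{Z=-Y^{2}-\tfrac{\beta}{\alpha}Y+X(1-Y)\}$, and to check on each face that the backward flow points inwards. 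This sign analysis on the faces is the main obstacle of the proof, and is exactly the kind of involved geometric argument announced in the introduction.

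Once $\mathcal{O}_{\lambda}\subset\mathcal{R}$ with $Z<Z_{\lambda}$ and $X$ increasing backwards, the $\alpha$-limit is identified by elimination. If $X$ stayed bounded, then $Z$ decreases backwards to some $Z_\infty\ge0$ and the $\alpha$-limit set, being invariant with $Z$ constant on it and $X$ bounded away from $0$, must lie in $\{Z=0\}$; applying Poincar\'e--Bendixson to the planar reduced flow there (where $X$ is strictly monotone) forces it to reduce to the critical point $P_2$, contradicting $Y<0$ on $\mathcal{O}_{\lambda}$ since $Y(P_2)>0$. Hence $X\to\infty$ backwards and $\mathcal{O}_{\lambda}$ emanates from a critical point at infinity; since $Z$ is bounded, $Q_4$ is out; since $Y<0$, $Q_2$ and $Q_5$ (both having $\overline Y>0$) are out; and $Q_3$, being a stable node, carries no outgoing orbit. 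Thus $\mathcal{O}_{\lambda}$ comes out of $Q_1$, so by the local analysis at $Q_1$ recalled in Section~\ref{sec.local} the carried profile satisfies $f(0)=A>0$; $Y<0$ along the whole orbit gives $f'<0$ on $(0,\xi_0)$, so $f$ is non-increasing on $[0,\infty)$ (it vanishes for $\xi\ge\xi_0$). To upgrade to the strict inequality $f'(0)<0$ I would use that $\xi\to0$ corresponds to a finite value $\eta(0)$ with $X\sim\tfrac12(\eta-\eta(0))^{-1}$ there, so that a bound on $|Y|$ would force $\dot Y\sim X(1-Y)\gtrsim X$ to be non-integrable at $\eta(0)$, whence $Y\to-\infty$; together with the $C^{2}$ regularity of $f$ at $0$ read off from \eqref{SSODE}, this yields $f'(0)<0$.

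Finally, the supersolution property is a short computation once $f'(0)<0$ is known. Extending $f$ by $0$ past its interface $\xi_0$, the function $u(x,t)=(T-t)^{-\alpha}f(|x|(T-t)^{\beta})$ solves \eqref{eq1} classically away from $x=0$ and from the moving interface $|x|=(T-t)^{-\beta}\xi_0$, with no singular contribution at the interface since $(f^m)'$ and $(f^m)''$ vanish there. At $x=0$, however, $f$ has a corner with $f'(0)<0$, so $x\mapsto u^{m}(x,t)$ is distributionally concave there, i.e.\ $\partial_{xx}(u^{m})$ carries a negative Dirac mass at $x=0$; consequently $u_t-\partial_{xx}(u^{m})-|x|^{\sigma}u^{p}$ is a nonnegative measure, so $u$ is a weak supersolution of \eqref{eq1}. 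As indicated, the only genuinely hard point is the barrier construction of the second paragraph.
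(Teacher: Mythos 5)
The skeleton of your argument (confine the orbit to $\{Y<0\}$, identify its $\alpha$-limit as $Q_1$ by elimination, then read off $f(0)=A>0$, $f'(0)<0$ and the distributional supersolution inequality at $x=0$) is sound, but the central step — the confinement — is exactly the part you leave undone, and the barrier you sketch is not the one that works. Your contradiction setup at a last crossing of $\{Y=0\}$ only yields $X(\eta^{*})\le Z(\eta^{*})<Z_{\lambda}$, which is not contradictory by itself, and a barrier assembled from a plane through $P_0^{\lambda}$ together with the quadric $\{\dot Y=0\}$ does not obviously close up; you explicitly flag this face-by-face sign analysis as ``the main obstacle'' and do not carry it out. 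So as written there is a genuine gap at the heart of the proof.

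The missing idea is that the correct barrier is the single plane $\{Y=-\beta/2\alpha\}$, not $\{Y=0\}$. On that plane the $\dot Y$-component of the field equals
\[
F(X,Z)=\frac{\beta^{2}}{4\alpha^{2}}+X\Bigl(1+\frac{\beta}{2\alpha}\Bigr)-Z,
\]
so a trajectory can pass from $\{Y>-\beta/2\alpha\}$ into $\{Y<-\beta/2\alpha\}$ only at a point where $Z\geq \beta^{2}/4\alpha^{2}+X(1+\beta/2\alpha)>\beta^{2}/4\alpha^{2}$; since $Z$ is non-decreasing along trajectories this inequality then persists, while every $P_0^{\lambda}$ with $\lambda\in(-\beta/\alpha,-\beta/2\alpha)$ has $Z$-coordinate $-\lambda^{2}-(\beta/\alpha)\lambda<\beta^{2}/4\alpha^{2}$. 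Hence no orbit crossing that plane can enter such a point, and the (unique) orbit on the one-dimensional stable manifold of $P_0^{\lambda}$ must lie entirely in $\{Y<-\beta/2\alpha\}$ — a stronger conclusion than $\{Y<0\}$, obtained with one plane and the monotonicity of $Z$ instead of the multi-face region you propose. With this in hand your elimination of $Q_2$, $Q_3$, $Q_4$, $Q_5$ and of $P_2$, and your weak-supersolution computation via the negative Dirac mass of $\partial_{xx}(u^{m})$ at $x=0$, go through essentially as in the paper.
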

\begin{proof}
Let us consider the plane $\{Y=-\beta/2\alpha\}$ in the phase space. The flow of the system over this plane is given by the sign of the expression
$$
F(X,Z)=-Y^2-\frac{\beta}{\alpha}Y+X-XY-Z=\frac{\beta^2}{4\alpha^2}+X\left(1+\frac{\beta}{2\alpha}\right)-Z.
$$
Thus, a trajectory of the system \eqref{PSsyst1} can cross this plane from the right to the left only in the region where $F(X,Z)\leq0$, that is,
\begin{equation}\label{interm3}
Z\geq\frac{\beta^2}{4\alpha^2}+X\left(1+\frac{\beta}{2\alpha}\right)>\frac{\beta^2}{4\alpha^2}.
\end{equation}
Since the component $Z$ is non-decreasing along the trajectories of the system \eqref{PSsyst1}, it cannot go down, thus the inequality \eqref{interm3} stays true along the trajectory in the region $\{Y<-\beta/2\alpha\}$. It thus follows that no connection which crosses the plane $\{Y=-\beta/2\alpha\}$ can enter any of the points $P_0^{\lambda}$ with $\lambda\in(-\beta/\alpha,-\beta/2\alpha)$ since the highest value of the component $Z$ on the critical parabola \eqref{parabola} is $Z=\beta^2/4\alpha^2$, attained for $\lambda=-\beta/2\alpha$. This means that the orbits entering any point $P_0^{\lambda}$ with $\lambda\in(-\beta/\alpha,-\beta/2\alpha)$ on the (one-dimensional) stable manifold of it have to lie completely in the half-space $\{Y<-\beta/2\alpha\}$. Thus the corresponding profiles are strictly decreasing before reaching their interface and have to come from the critical point $Q_1$ at infinity, thus intersecting the vertical axis $\xi=0$ with a negative slope $f'(0)<0$, as stated. Finally, the self-similar functions $u(x,t)$ whose profile is of this type are solutions to Eq. \eqref{eq1} at $|x|>0$ and supersolutions at the origin.
\end{proof}

\noindent \textbf{Remarks.} (a) The proof above also shows that any orbit of the system crossing the plane $\{Y=-\beta/2\alpha\}$ has to enter the critical point $Q_3$ at infinity, thus the profiles contained in it will have a negative change of sign at some $\xi_0\in(0,\infty)$ but no interface.

(b) The decreasing supersolutions in self-similar form given by Proposition \ref{prop.second} will be strongly used for comparison in the companion paper \cite{IMS20} where the qualitative analysis of a similar equation to Eq. \eqref{eq1}, more precisely
\begin{equation}\label{eq.ims}
u_t=(u^m)_{xx}+(1+|x|)^{\sigma}u^p, \qquad 0<p<1, \ m>1,
\end{equation}
is performed. They will be helpful in the proof of the finite speed of propagation of solutions to \eqref{eq.ims} for compactly supported initial data when $m+p=2$.

\medskip

The global description of the trajectories of the system \eqref{PSsyst1} entering the critical points $P_0^{\lambda}$ with $\lambda\in[-\beta/2\alpha,0)$ is more involved. Let us consider the parabolic cylinder
\begin{equation}\label{cyl}
-Y^2-\frac{\beta}{\alpha}Y-Z=0.
\end{equation}
We have the following technical result.
\begin{lemma}\label{lem.inner}
The orbits in the phase space associated to the system \eqref{PSsyst1} entering one of the points $P_0^{\lambda}$ with $\lambda\in[-\beta/2\alpha,0)$ from the interior of the parabolic cylinder \eqref{cyl} stay forever in the half-space $\{Y<0\}$. The same holds true for those orbits entering one of the points
$P_0^{\lambda}$ with $\lambda\in[-\beta/2\alpha,0)$ from the exterior of the cylinder \eqref{cyl} but which on their trajectory have previously crossed the parabolic cylinder at a point lying in the half-space $\{Z>X\}$. The profiles contained in these orbits are non-increasing.
\end{lemma}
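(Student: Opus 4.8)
The plan is to run a barrier argument in the phase space, the two obstacles being the plane $\{Y=-\beta/2\alpha\}$ and the parabolic cylinder \eqref{cyl} itself. First I would collect the preliminary facts. Along any orbit carrying a profile one has $X>0$ and $Z>0$ at every finite $\eta$ (since $f>0$ before its interface), $Z$ is non-decreasing, and, because the orbit tends to $P_0^\lambda$ with $\lambda\in[-\beta/2\alpha,0)$, it satisfies $Z\le Z(P_0^\lambda)\le\beta^2/4\alpha^2$ throughout. Writing $H:=-Y^2-\frac{\beta}{\alpha}Y-Z$, so that \eqref{cyl} is $\{H=0\}$ and its interior is $\{H>0\}$, I note that on $\{Z\ge0\}$ the interior of the cylinder is contained in $\{-\beta/\alpha<Y<0\}$, and that, since the second equation of \eqref{PSsyst1} reads $\dot Y=H+X(1-Y)$, inside the cylinder $\dot Y>0$, i.e. $Y$ is strictly increasing there. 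Next I would compute $\dot H$ on $\{H=0\}$: substituting $Z=-Y^2-\frac{\beta}{\alpha}Y$ and $\dot Y=X(1-Y)$ one obtains $\dot H=X\,\phi(Y)$ with $\phi(Y)=\sigma Y^2+\big((\sigma-1)\frac{\beta}{\alpha}-2\big)Y-\frac{\beta}{\alpha}$. This quadratic has one negative and one positive root $Y_-<0<Y_+$, and the crucial point is that $\phi(-\beta/\alpha)=\frac{\beta}{\alpha}(\frac{\beta}{\alpha}+1)>0$ whereas $\phi(-\beta/2\alpha)=\frac{\beta^2}{\alpha^2}\cdot\frac{2-\sigma}{4}<0$ precisely because $\sigma>2$; hence $Y_-\in(-\beta/\alpha,-\beta/2\alpha)$ and $\phi<0$ exactly on $(Y_-,Y_+)\supseteq(Y_-,0]$. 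Finally, as already observed in the proof of Proposition~\ref{prop.second}, on $\{Y=-\beta/2\alpha\}$ one has $\dot Y=\frac{\beta^2}{4\alpha^2}+X(1+\frac{\beta}{2\alpha})-Z\ge X(1+\frac{\beta}{2\alpha})>0$; so the orbit cannot cross $\{Y=-\beta/2\alpha\}$ from $\{Y>-\beta/2\alpha\}$ into $\{Y<-\beta/2\alpha\}$, equivalently $\{Y<-\beta/2\alpha\}$ is invariant under decreasing $\eta$ along the orbit.

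For an orbit entering $P_0^\lambda$ from the interior of the cylinder ($H>0$ for all large $\eta$) I would set $\eta_{ex}=\sup\{\eta:H(\eta)\le0\}$. If $\eta_{ex}=-\infty$ the whole orbit lies in the interior and $Y\in(-\beta/\alpha,0)$. Otherwise $H(\eta_{ex})=0$, $H>0$ on $(\eta_{ex},\infty)$ and $\dot H(\eta_{ex})\ge0$; since $X>0$ this forces $\phi(Y(\eta_{ex}))\ge0$, and since $Y(\eta_{ex})\in(-\beta/\alpha,0)$ (it lies on the cylinder with $Z>0$) we get $Y(\eta_{ex})\le Y_-<-\beta/2\alpha$. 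The invariance of $\{Y<-\beta/2\alpha\}$ then gives $Y<-\beta/2\alpha$ for $\eta\le\eta_{ex}$, while $Y\in(-\beta/\alpha,0)$ for $\eta>\eta_{ex}$; in all cases $Y<0$ along the orbit, and since $f'=\frac{\alpha}{m}\xi f^{2-m}Y$ the profile is (strictly) decreasing.

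For an orbit entering $P_0^\lambda$ from the exterior but crossing \eqref{cyl} at some $\eta_c$ with $Z(\eta_c)>X(\eta_c)$, I would first note that on the cylinder $Z=-Y^2-\frac{\beta}{\alpha}Y$, so $Z>X\ge0$ forces $Y(\eta_c)\in(-\beta/\alpha,0)$; thus at $\eta_c$ the orbit lies in the open set $\mathcal R=\{Y<0,\ Z>X\}$. A short computation shows $\mathcal R$ is forward invariant along the orbit: on the face $\{Z=X,\,Y<0\}$ one has $\frac{d}{d\eta}(Z-X)=X(\sigma X-(m-1)Y)>0$, so orbits enter $\{Z>X\}$; and on the face $\{Y=0,\,Z>X\}$ one has $\dot Y=X-Z<0$, while a tangential contact $Y=\dot Y=0$ forces $X=Z$ and then $\ddot Y=-\sigma X^2<0$, which contradicts $Z>X$ an instant earlier. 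Hence $Y<0$ for all $\eta\ge\eta_c$. For $\eta\le\eta_c$ I would split according to $\dot H(\eta_c)=X(\eta_c)\phi(Y(\eta_c))$: if $Y(\eta_c)\le-\beta/2\alpha$ then (using $\dot Y>0$ on the cylinder to handle the borderline equality) $Y<-\beta/2\alpha$ just below $\eta_c$, and the invariance of $\{Y<-\beta/2\alpha\}$ closes the argument; if $Y(\eta_c)\in(-\beta/2\alpha,0)\subset(Y_-,Y_+)$ then $\phi(Y(\eta_c))<0$, so $H>0$ just below $\eta_c$ and the portion $\{\eta<\eta_c\}$ is exactly the ``interior'' situation already treated. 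In every case $Y<0$ on the whole orbit, hence the profile is non-increasing.

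The one genuinely delicate ingredient is the sign computation $\phi(-\beta/2\alpha)<0$, equivalently $Y_-<-\beta/2\alpha$: this is what turns the plane $\{Y=-\beta/2\alpha\}$ into a usable barrier, and it hinges exactly on the hypothesis $\sigma>2$ — the same mechanism responsible for the localization $\xi_0\le\xi_{max}$ in Theorem~\ref{th.exist}. Everything else is careful but routine bookkeeping of the suprema and infima of the level sets $\{H=0\}$, $\{Y=0\}$ and $\{Z=X\}$ together with the signs of the corresponding derivatives, and I do not anticipate further obstacles there.
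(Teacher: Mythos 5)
Your proof is correct and follows essentially the same strategy as the paper's: the same barriers (the cylinder itself via the sign of the quadratic $\phi$, which is exactly the paper's $h$, the plane $\{Y=-\beta/2\alpha\}$ whose usability rests on $\sigma>2$ and on $Z\leq\beta^2/4\alpha^2$ along these orbits, and the planes $\{Z=X\}$ and $\{Y=0\}$) with identical sign computations. The only difference is organizational — you track a last exit time from the cylinder and use backward invariance of $\{Y<-\beta/2\alpha\}$, where the paper argues forward by contradiction starting from $\{Y>0\}$ — and if anything your bookkeeping of the borderline and tangential cases is slightly more explicit than the original.
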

\begin{proof}
We divide the proof into several steps for the reader's convenience.

\medskip

\noindent \textbf{Step 1.} The normal direction to the parabolic cylinder \eqref{cyl} is given by $\overline{n}=(0,-2Y-\beta/\alpha,-1)$, thus the direction of the flow of the system \eqref{PSsyst1} on the cylinder \eqref{cyl} depends on the sign of the expression
\begin{equation*}
\begin{split}
F(X,Y)&=\left(-2Y-\frac{\beta}{\alpha}\right)X(1-Y)+(\sigma-2)X\left(Y^2+\frac{\beta}{\alpha}Y\right)\\
&=X\left[(\sigma-2)Y^2+(\sigma-2)\frac{\beta}{\alpha}Y-2Y-\frac{\beta}{\alpha}+2Y^2+\frac{\beta}{\alpha}Y\right]\\
&=X\left[\sigma Y^2+\left((\sigma-1)\frac{\beta}{\alpha}-2\right)Y-\frac{\beta}{\alpha}\right]=Xh(Y),
\end{split}
\end{equation*}
where
$$
h(Y)=Y\left[\sigma Y+(\sigma-1)\frac{\beta}{\alpha}\right]-\left(2Y+\frac{\beta}{\alpha}\right).
$$
For $Y\in[-\beta/2\alpha,0]$ we notice that
$$
\sigma Y+(\sigma-1)\frac{\beta}{\alpha}\geq-\frac{\beta}{2\alpha}\sigma+(\sigma-1)\frac{\beta}{\alpha}=\frac{\beta}{2\alpha}(\sigma-2)>0,
$$
thus, taking into account that $Y\in[-\beta/2\alpha,0]$ we readily get that $h(Y)\leq0$. It follows that on this side of the cylinder \eqref{cyl} (the half of it where $Y\in[-\beta/2\alpha,0]$) the direction of the flow is only pointing from inside the cylinder towards outside.

\medskip

\noindent \textbf{Step 2.} It is obvious now that a trajectory coming from the half-space $\{Y>0\}$, cannot enter the interior of the cylinder \eqref{cyl}, since by Step 1 it cannot cross the cylinder with $Y\in[-\beta/2\alpha,0]$ and this forces the orbit to cross the plane $\{Y=-\beta/2\alpha\}$ and thus enter $Q_3$ according to Remark (a) after Proposition \ref{prop.second}. Thus a trajectory entering any of the points $P_0^{\lambda}$ from the interior of the cylinder \eqref{cyl} must live forever in the region $\{Y<0\}$.

\medskip

\noindent \textbf{Step 3.} Let us finally consider the case of an orbit entering $P_0^{\lambda}$ with $\lambda\in[-\beta/2\alpha,0)$ from the outside of the cylinder but after having crossed the cylinder \eqref{cyl} at a point in the region $\{Z>X\}$. We show that this orbit cannot cross the plane $\{Y=0\}$. Let $\eta_0$ be the independent variable such that $(X(\eta_0),Y(\eta_0),Z(\eta_0))$ is the intersection point of the orbit with the cylinder \eqref{cyl}, where $Z(\eta_0)>X(\eta_0)$ as in the statement. The orbit cannot have crossed $\{Y=0\}$ at some point with $\eta<\eta_0$ as proved in Step 2. For $\eta>\eta_0$ the orbit is already outside the cylinder. On the one hand, the direction of the flow on the plane $\{X-Z=0\}$ is given by the sign of the expression
$$
X((m-1)Y-2X)-(\sigma-2)XZ=X((m-1)Y-\sigma X)<0,
$$
when $Y<0$, hence it follows that the plane cannot be crossed in the half-space $\{Y<0\}$ and thus $X(\eta)<Z(\eta)$ for any $\eta>\eta_0$ while the orbit lies in $\{Y<0\}$. On the other hand, the direction of the flow on the plane $\{Y=0\}$ is given by the sign of the difference $X-Z$, thus this plane can be crossed from the left to the right only in the region $\{X>Z\}$, which our orbit will never reach according to the previous calculation. We thus get that the orbit under consideration will stay forever in the half-space $\{Y<0\}$ and the profiles contained in it will be non-increasing, as stated.
\end{proof}
We are now ready to prove a proposition which essentially restates in terms of the phase space the proof of Theorem \ref{th.exist} for the critical points $P_0^{\lambda}$ with $\lambda\in(-\beta/2\alpha,0)$.
\begin{proposition}\label{prop.first}
For any $\lambda\in(-\beta/2\alpha,0)$ there exists at least an orbit entering the critical point $P_0^{\lambda}$ and containing good profiles with interface according to Definition \ref{def1}.
\end{proposition}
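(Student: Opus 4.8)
The plan is to produce the desired orbit by a connectedness/shooting argument in the phase space, using the critical point $Q_1$ at infinity as the source of a one-parameter family of orbits and the geometry established in Lemma \ref{lem.inner} and Proposition \ref{prop.att} to trap one of them at $P_0^\lambda$. Fix $\lambda\in(-\beta/2\alpha,0)$ and let $\lambda_0=-\beta/2\alpha$. First I would recall that from $Q_1$ there emanates a one-parameter family of orbits entering the finite part of the phase space (these contain profiles with $f(0)=a>0$ and arbitrary $f'(0)$); parametrize this family by the slope at the origin, equivalently by a parameter $s$ ranging in an interval, so that for $s$ large the orbit leaves with $Y$ very negative and for $s$ near some endpoint the orbit has $Y(0^+)$ close to $0$. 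The idea is to look at where these orbits end up and to interpolate.

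The key steps, in order: (i) Show that for the "most negative" end of the family the orbit crosses the plane $\{Y=-\beta/2\alpha\}$ — indeed by Proposition \ref{prop.second} and the Remark after it, orbits that stay in $\{Y<-\beta/2\alpha\}$ enter points $P_0^{\mu}$ with $\mu\in(-\beta/\alpha,-\beta/2\alpha)$ or enter $Q_3$, and by pushing $f'(0)$ negative enough the orbit is forced into $\{Y<-\beta/2\alpha\}$, hence eventually reaches $Q_3$; this is the regime whose endpoint behavior is "$Z$ overshoots past $\beta^2/4\alpha^2$." (ii) Show that for orbits starting near $\{Y=0\}$ (slope near $0$) the orbit enters the interior of the parabolic cylinder \eqref{cyl}, or at least crosses it at a point with $Z>X$, and therefore by Lemma \ref{lem.inner} stays forever in $\{Y<0\}$ and is a non-increasing profile; since $X$ is strictly decreasing and $Z$ strictly increasing along such an orbit, there are no limit cycles, and the orbit must converge to a critical point. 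By Proposition \ref{prop.att} the only available attractors in the closure of $\{Y<0\}$ reachable this way are the points $P_0^{\mu}$ with $\mu\in(-\beta/2\alpha,0)$ (the point $P_2$ lies in $\{Z=0\}$ and is not reachable once $Z>0$; $Q_3$ requires crossing $\{Y=-\beta/2\alpha\}$ which a trajectory trapped in $\{Y<0\}$ with the cylinder on its left cannot do). So both endpoints of the parameter interval funnel into critical points on the parabola, but to different ends: one family lands below $P_0^{-\beta/2\alpha}$ (after crossing $\{Y=-\beta/2\alpha\}$, hence into $Q_3$) and the other lands somewhere on $\mathcal{S}$. (iii) Read off a continuous "landing function": for parameters $s$ in the trapped regime, define $\mu(s)\in[-\beta/2\alpha,0)$ by $P_0^{\mu(s)}=\lim$ of the orbit; by continuous dependence on initial data and the asymptotic stability (with attraction to a subarc that can be made as small as we like, by Proposition \ref{prop.att}) this $\mu(s)$ varies continuously, and as $s$ moves toward the boundary of the trapped regime $\mu(s)\to-\beta/2\alpha$ (the orbit crosses the cylinder nearer and nearer the vertex $Y=-\beta/2\alpha$ where $Z=\beta^2/4\alpha^2$), while as $s$ decreases toward where orbits enter the interior of the cylinder with $Y(0^+)$ near $0$ one gets $\mu(s)$ near $0$. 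By the intermediate value theorem there is an $s$ with $\mu(s)=\lambda$, and the corresponding orbit enters $P_0^\lambda$. (iv) Finally, invoke Lemma \ref{lem.P1}: an orbit entering $P_0^\lambda$ on its (two-dimensional, since $\lambda>-\beta/2\alpha$) stable manifold contains a profile with $f(\xi_0)=0$, $(f^m)'(\xi_0)=0$ and $f>0$ just to the left, i.e. a good profile with interface exactly at the $\xi_0$ determined by \eqref{interm2}; this is the claimed profile.

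The main obstacle I expect is step (iii): making the "landing map" $s\mapsto\mu(s)$ genuinely continuous and surjective onto $(-\beta/2\alpha,0)$. Continuous dependence on initial conditions only gives closeness on compact time intervals, and here the orbits run to $\eta\to\infty$; one must combine this with the robust asymptotic stability of small subarcs of $\mathcal{S}$ from Proposition \ref{prop.att} (a neighborhood of a short arc of the parabola is forward-invariant and captures every nearby orbit) to upgrade to "the limit point depends continuously on $s$." Equally delicate is ruling out that the family "jumps over" $P_0^\lambda$ — i.e. that $\mu(s)$ might fail to take the value $\lambda$ because the orbit's crossing point on the cylinder skips an interval of $Y$-values; this is where one needs a monotonicity or at least continuity estimate relating the parameter $s$ (or the slope $f'(0)$) to the $Y$-coordinate of the point where the orbit first meets the cylinder \eqref{cyl}. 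I would handle this by a contradiction/connectedness argument: the set of $s$ for which the orbit is trapped in $\{Y<0\}$ is relatively open, its complement (orbits reaching $\{Y=-\beta/2\alpha\}$, hence $Q_3$) is relatively open too, and by continuity of the $Z$-coordinate at the cylinder-crossing as a function of $s$ the image $\{\mu(s)\}$ is an interval accumulating at both $-\beta/2\alpha$ and $0$; hence it is all of $(-\beta/2\alpha,0)$. The remaining steps are routine applications of the sign computations for $\dot X,\dot Y,\dot Z$ and of the monotonicity of $X$ and $Z$ along trajectories already recorded in Section \ref{sec.local}.
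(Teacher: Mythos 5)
Your strategy (shoot forward from $Q_1$ and track a ``landing map'' onto the critical parabola) is genuinely different from the paper's, but it has a gap that is fatal as written: the orbit it produces is not shown to contain a \emph{good} profile. Your shooting parameter is the initial slope $s=f'(0)$ of profiles emanating from $Q_1$, and the intermediate value argument of step (iii) returns some $s$ with $\mu(s)=\lambda$; for that $s$ the corresponding profile has $f(0)=a>0$ but $f'(0)=s\neq 0$ in general, so it satisfies neither (P1) nor (P2) of Definition \ref{def1}. Step (iv) appeals to Lemma \ref{lem.P1}, but that lemma only controls the behavior at the interface point $\xi_0$, not at $\xi=0$: ``profile with interface'' is strictly weaker than ``good profile with interface''. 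Note also that the mere existence of orbits entering $P_0^{\lambda}$ is immediate from the two-dimensional stable manifold in Lemma \ref{lem.P1}(c); the whole content of the proposition is that one of those orbits also emanates from $P_0$, $P_2$, $Q_5$, or from $Q_1$ with zero slope. The paper handles exactly this by shooting \emph{backward}: it integrates the linearization at $P_0^{\lambda}$ to obtain the explicit one-parameter family \eqref{interm7} of incoming orbits, shows that one open set of parameters corresponds to orbits coming from $Q_1$ confined to $\{Y<0\}$ (hence $f'(0)<0$) and another open set to orbits coming from $Q_2$, and concludes that the orbit at the supremum of the first set must originate at $P_0$, at $P_2$, or at $Q_1$ tangentially with $f'(0)=0$ --- each of which yields a good profile. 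Your argument has no mechanism to single out such an orbit.

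Even for the weaker conclusion, step (iii) is not established. The surjectivity of $\mu$ onto $(-\beta/2\alpha,0)$ rests on the asserted limits $\mu(s)\to 0$ and $\mu(s)\to-\beta/2\alpha$ at the two ends of the parameter range, neither of which is proved; in fact for $\sigma$ large (cf. Proposition \ref{prop.large}) the orbits from $Q_1$ with slope near zero shadow the connection to $P_2$ and then the unstable orbit of $P_2$ into $Q_3$, so there is no reason the landing map attains values near $0$, and without two parameters in the same connected component of the trapped set bracketing $\lambda$ the intermediate value theorem does not apply. In addition, Lemma \ref{lem.inner} is invoked in the wrong direction: it says that an orbit which \emph{does} enter some $P_0^{\lambda}$ from inside the cylinder \eqref{cyl} has always been in $\{Y<0\}$; it does not say that an orbit currently inside the cylinder is trapped in $\{Y<0\}$ or must converge to the parabola (the flow on the cylinder for $Y\in[-\beta/2\alpha,0]$ points outward, so orbits may leave it and subsequently cross $\{Y=-\beta/2\alpha\}$ toward $Q_3$). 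Finally, the orbits leaving $Q_1$ form a two-parameter family ($f(0)=a$ and $f'(0)$), so ``parametrize by the slope'' requires choosing a curve in that family, and the endpoint behavior of the landing map depends on that choice.
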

\begin{proof}
The main idea of the proof is to classify the connections entering $P_0^{\lambda}$ on the two-dimensional stable manifold of this point as tangent to a one-parameter family of explicit orbits and then perform a shooting with the free parameter of the family ranging between the two limits of the stable manifold: one inside the invariant plane $\{X=0\}$ on the horizontal direction $Z=-\lambda^2-(\beta/\alpha)\lambda$ (coming from the unstable node $Q_2$) and the other on a direction entering through the interior of the cylinder \eqref{cyl} (and coming from the unstable node $Q_1$). The proof is rather technical and will be split into several steps.

\medskip

\noindent \textbf{Step 1.} We translate the point $P_0^{\lambda}$ to the origin by letting
\begin{equation}\label{PSvar3}
Y_1=Y-\lambda, \qquad Z_1=Z+\lambda^2+\frac{\beta}{\alpha}\lambda,
\end{equation}
and obtain the following system
\begin{equation}\label{PSsyst2}
\left\{\begin{array}{ll}\dot{X}=(m-1)\lambda X+(m-1)XY_1-2X^2,\\
\dot{Y}_1=-Y_1^2-\frac{2\alpha\lambda+\beta}{\alpha}Y_1+(1-\lambda)X-XY_1-Z_1,\\
\dot{Z}_1=-\frac{(\sigma-2)\lambda(\alpha\lambda+\beta)}{\alpha}X+(\sigma-2)XZ_1,\end{array}\right.
\end{equation}
where our critical point becomes $(X,Y_1,Z_1)=(0,0,0)$. We notice that the linear approximation of the previous system \eqref{PSsyst2} can be integrated to obtain an approximation of the trajectories in a small neighborhood of the point. We readily get that
$$
\frac{dZ_1}{dX}\sim-\frac{(\sigma-2)(\alpha\lambda+\beta)}{(m-1)\alpha},
$$
that is
\begin{equation}\label{interm4}
Z_1=-\frac{(\sigma-2)(\alpha\lambda+\beta)}{(m-1)\alpha}X+o(|X|),
\end{equation}
where by the notation $o(|X|)$ we understand terms that tend to zero faster than $|X|$. The first order in \eqref{interm4} represents the plane tangent to the stable manifold of the point, generated by the two eigenvectors of the negative eigenvalues. Introducing the approximation given by \eqref{interm4} into the second equation of the system \eqref{PSsyst1} we find the new system (up to order one)
\begin{equation}\label{interm5}
\left\{\begin{array}{ll}\dot{X}=(m-1)\lambda X+O(|(X,Y_1)|^2),\\
\dot{Y}_1=-\frac{2\alpha\lambda+\beta}{\alpha}Y_1+\left(1-\lambda+\frac{(\sigma-2)(\alpha\lambda+\beta)}{(m-1)\alpha}\right)X+O(|(X,Y_1)|^2),\end{array}\right.
\end{equation}

\medskip

\noindent \textbf{Step 2.} We want to integrate in a linear approximation the system \eqref{interm5}. To this end, let us first notice that the linear term in the equation for $\dot{Y}_1$ in \eqref{interm5} cannot vanish. Assume for contradiction that it vanishes in the first order. It thus follows that
\begin{equation}\label{interm6}
Y_1=KX+o(|X|), \qquad K=\frac{\alpha}{2\alpha\lambda+\beta}\left[1-\lambda+\frac{(\sigma-2)(\alpha\lambda+\beta)}{(m-1)\alpha}\right],
\end{equation}
thus $dY_1/dX=K+o(1)$ in a neighborhood of the origin. But in this case, we also infer from the system \eqref{interm5} that
$$
\frac{dY_1}{dX}=\frac{O(|(X,Y_1)|^2}{(m-1)\lambda X}=O(|(X,Y_1)|),
$$
which is a contradiction to \eqref{interm6}. It thus follows that the linear part in the equation for $\dot{Y}_1$ in \eqref{interm5} is nontrivial and we can integrate it up to the first order (linear terms) using always the Hartman-Grobman Theorem to obtain that
\begin{equation}\label{interm7}
Y_1=K_1X^{-\frac{2}{m-1}-\frac{2}{(\sigma+2)\lambda}}-\frac{(\sigma+2)(m-\sigma+1)\lambda-(3\sigma-2)(m-1)}{(m-1)[(\sigma+2)(m+1)\lambda+2(m-1)]}X,
\end{equation}
with $K_1\in\real$ arbitrary. We deduce that every trajectory entering $(X,Y_1,Z_1)=(0,0,0)$ on the two-dimensional stable manifold does that tangent to a trajectory as in \eqref{interm7} with some $K_1\in\real$. We can thus identify these trajectories with the parameter $K_1$ as above. Let us also notice at this point that
$$
-\frac{2}{m-1}-\frac{2}{(\sigma+2)\lambda}>0 \ \ {\rm if \ and \ only \ if} \ \ \lambda\in\left(-\frac{\beta}{2\alpha},0\right),
$$
a fact that confirms our previous analysis leading to only one trajectory entering $P_0^{\lambda}$ for $\lambda\in(-\beta/\alpha,-\beta/2\alpha)$. Indeed, in that case the power of $X$ in the first term of \eqref{interm7} becomes positive and in order to be tangent to the unstable manifold we are obliged to put $K_1=0$ to vanish that term. In our range of $\lambda\in(-\beta/2\alpha,0)$ we thus find a one-parameter family of trajectories entering $P_0^{\lambda}$.

\medskip

\noindent \textbf{Step 3. Trajectories contained in $\{Y<0\}$}. Let us take a ball around the point $P_0^{\lambda}$ (understood as a ball around the origin in variables $(X,Y_1,Z_1)$) with $X<\epsilon$ and $-\epsilon<Z_1<0$ for some $\epsilon$ sufficiently small. More precisely, we let $\epsilon$ such that
$$
0<\epsilon<\frac{1}{2}\left[-\lambda^2-\frac{\beta}{\alpha}\lambda\right]
$$
There exists then $\overline{K}_1$ depending on $\epsilon$ such that for any $K_1\in(-\infty,\overline{K}_1)$ we have $Y_1<0$ on the boundary of that neighborhood at the points where $X=\epsilon$. It thus follows that these orbits pass through the interior of the cylinder \eqref{cyl}, in particular at this point with $X=\epsilon$ and $Z_1<0$ (this follows from the fact that $Z$ is increasing along the trajectories, thus $Z_1$ too). Since all these orbits enter $P_0^{\lambda}$, they do that either through the interior of the cylinder \eqref{cyl} or crossing the cylinder at a later point along the trajectory. But since we are in the half-space $\{Y<0\}$, the component $X$ decreases along the trajectories and the component $Z$ increases along the trajectories, hence at the later point where the trajectory crosses the cylinder we have
$$
X<\epsilon, \qquad Z>-\lambda^2-\frac{\beta}{\alpha}\lambda-\epsilon>\epsilon,
$$
which implies that $Z>X$ at the point where the trajectory crosses the cylinder \eqref{cyl}. We infer from Lemma \ref{lem.inner} that in any of the two possible cases the orbits are fully contained in the half-space $\{Y<0\}$ (and in particular, taking into account the local analysis of the phase space, have to go out of the point $Q_1$).

\medskip

\noindent \textbf{Step 4. Trajectories coming from the critical point $Q_2$}. Considering the invariant plane $\{X=0\}$, the system \eqref{PSsyst1} reduces to the following equations
$$
\dot{Y}=-Y^2-\frac{\beta}{\alpha}Y-Z, \qquad \dot{Z}=0,
$$
thus it is easy to see that there exists a connection inside the plane $\{X=0\}$ entering the point $P_0^{\lambda}$ and going on the half-line with constant component $Z$, namely
\begin{equation}\label{interm8}
\{X=0, Y>\lambda, Z=-\lambda^2-\frac{\beta}{\alpha}\lambda\}.
\end{equation}
Let us take for some $\epsilon>0$ small the point
$$
P_{\epsilon}^{\lambda}=\left(0,\lambda+\epsilon,-\lambda^2-\frac{\beta}{\alpha}\lambda\right)
$$
lying on the half-line \eqref{interm8} and consider the ball $B(P_{\epsilon}^{\lambda},\epsilon/2)$ in the phase space associated to the system \eqref{PSsyst1}. Taking some $y_0>1$ (to be determined later), we infer by the continuous dependence theorem \cite[Theorem 1, Section 2.3]{Pe} that there exists $\delta>0$ sufficiently small (depending on $\epsilon$) such that all the orbits entering the ball $B(P_{\epsilon}^{\lambda},\epsilon/2)$ pass through the ball $B(R,\delta)$, where $R$ is the point in the half-line \eqref{interm8} with $Y=y_0$, that is
$$
R=\left(0,y_0,-\lambda^2-\frac{\beta}{\alpha}\lambda\right).
$$
It is easy to see that, by letting $\epsilon>0$ eventually smaller, the stable manifold $W_s$ of the point $P_0^{\lambda}$ intersects the ball $B(P_{\epsilon}^{\lambda},\epsilon/2)$. Let $K_{1,0}$ be the parameter (corresponding to \eqref{interm7}) of a connection entering $P_0^{\lambda}$ after intersecting the ball $B(P_{\epsilon}^{\lambda},\epsilon/2)$ and let us take another parameter $K_1>K_{1,0}$ and consider the corresponding connection according to \eqref{interm7}. Fixing the plane $\{Y=\lambda+\epsilon\}$, it readily follows from \eqref{interm7} that the connection intersects this plane at a coordinate $X$ smaller than the one corresponding to the intersection of the connection with parameter $K_{1,0}$. The contrary happens to the component $Z_1$ since $Z_1$ is negative but has a linear dependence on $X$, thus the component $Z_1$ corresponding to the parameter $K_1$ when crossing the plane $\{Y=\lambda+\epsilon\}$ is bigger (that is, closer to zero) than the one corresponding to the one with parameter $K_{1,0}$. All these considerations prove that any connection from the one-parameter family \eqref{interm7} with parameter $K_1\in(K_{1,0},\infty)$ intersects the ball $B(P_{\epsilon}^{\lambda},\epsilon/2)$ and thus these orbits also pass through the ball $B(R,\delta)$. To end this step, we analyze the plane $\{Y=y_0\}$: the direction of the flow of the system \eqref{PSsyst1} on it is given by the sign of the expression
$$
H(X,Z)=-y_0^2-\frac{\beta}{\alpha}y_0+X(1-y_0)-Z,
$$
which is strictly negative provided $y_0>1$. Thus the plane $\{Y=y_0\}$ can be crossed only from the right to the left. By choosing $y_0>1$ in the previous arguments, we infer that the orbits entering $P_0^{\lambda}$ and with parameter $K_1\in(K_{1,0},\infty)$ in \eqref{interm7} must come out of a critical point with component $Y>y_0-\delta$. Since in the previous arguments $y_0$ can be taken finite but as large as we wish (changing $\delta$ accordingly) we find that there are orbits coming out of $Q_2$ and entering $P_0^{\lambda}$.

\medskip

\noindent \textbf{Step 5. Good profiles entering $P_0^{\lambda}$}. According to Step 3, let us take $\overline{K}\in\real$ to be the supremum of the parameters $K_1$ of all orbits in the one-parameter family \eqref{interm7} which go out in the region $\{Y<0\}$. These orbits come from the critical point $Q_1$ at infinity. Since $Q_1$ is an unstable node, the set of parameters $K_1$ such that the corresponding orbit goes out in the region $\{Y<0\}$ is an open set, thus $\overline{K}$ does not belong to this set. Moreover, we infer from Step 4 and the fact that $Q_2$ is also an unstable node that the set of parameters $K_1$ as in \eqref{interm7} corresponding to orbits going out of $Q_2$ is also an open set, hence $\overline{K}$ does also not belong to this set. It thus remains for the orbit corresponding to the parameter $\overline{K}$ in \eqref{interm7} to come out from one of the points $Q_1$ (but in that case not with slope $Y<0$), $P_0$, $P_2$ or $Q_5$. We can remove $Q_5$ from the list by noticing that, by definition, all the orbits with parameter $K_1<\overline{K}$ in \eqref{interm7} go out from the critical point $Q_1$ with negative slope, that is for any profile $f(\xi)$ contained in them we have
$$
(f^m)'(0)=mf^{m-1}(0)f'(0)<0,
$$
hence in the limit case corresponding to parameter $\overline{K}$ we have profiles with $(f^m)'(0)\leq0$, while the profiles contained in orbits going out of $Q_5$ do that with $(f^m)'(0)=K>0$ as it follows from \eqref{beh.Q5}. On the other hand, it is easy to find that, if the orbit comes out of $Q_1$, it cannot do so with positive slope by the same reason. It thus follows that the orbit with parameter $\overline{K}$ in \eqref{interm7} might come out from $P_0$, from $P_2$ or from $Q_1$ but in the latter case containing profiles with $f'(0)=0$. In all these cases, we discovered an orbit entering $P_0^{\lambda}$ and containing good profiles with interface, as claimed.
\end{proof}
We finally deal with the critical point $P_0^{\lambda}$ with $\lambda=-\beta/2\alpha$, which is the maximum point of the critical parabola \eqref{parabola}. Its analysis is technically more involved, as we shall see below.
\begin{proposition}\label{prop.max}
There exists at least an orbit entering the critical point $P_0^{\lambda}$ with $\lambda=-\beta/2\alpha$ and containing good profiles with interface according to Definition \ref{def1}.
\end{proposition}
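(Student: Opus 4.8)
The plan is to keep the architecture of the proof of Proposition~\ref{prop.first} --- build a family of orbits entering $P_0^{\lambda}$ with $\lambda=-\beta/2\alpha$ and then shoot between two extremal regimes --- but to replace its local step (the linear integration that produces \eqref{interm7}) by a genuinely degenerate local analysis. Indeed, by Lemma~\ref{lem.P1}(b) the linearization of \eqref{PSsyst1} at $P_0^{-\beta/2\alpha}$ has eigenvalues $l_1=(m-1)\lambda<0$ and $l_2=l_3=0$, one of the two vanishing directions being tangent to the invariant parabola \eqref{parabola} of critical points. First I would translate the point to the origin exactly as in \eqref{PSvar3}; one then checks that the exponent $-\tfrac{2}{m-1}-\tfrac{2}{(\sigma+2)\lambda}$ governing the family \eqref{interm7} vanishes precisely at $\lambda=-\beta/2\alpha$, so that the integration of the linear part no longer separates the trajectories. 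The picture I expect to extract from a center-manifold reduction of \eqref{PSsyst2} at $\lambda=-\beta/2\alpha$, together with the fold structure underlying Proposition~\ref{prop.att} (the transverse multiplier $l_2=-2\lambda-\beta/\alpha$ changes sign as $\lambda$ crosses $-\beta/2\alpha$), is that the arc of the parabola with $\lambda>-\beta/2\alpha$ is weakly attracting while the arc with $\lambda<-\beta/2\alpha$ is repelling, so that the orbits reaching $P_0^{-\beta/2\alpha}$ from the half-space $\{X>0\}$ do so slowly, \emph{along the center directions}, and sweep out the two-dimensional center-stable manifold announced after Lemma~\ref{lem.P1}. The outcome should be a description of the orbits entering $P_0^{-\beta/2\alpha}$ through $\{X>0\}$ as a family depending monotonically on a single real parameter $K$, playing the role of $K_1$ in \eqref{interm7}; note that, exactly as in \eqref{interm2} with $\lambda=-\beta/2\alpha$, any such orbit automatically carries profiles with an interface, and by the Remark following Lemma~\ref{lem.P1} that interface sits exactly at $\xi=\xi_{max}$.

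With such a parameterization available, the shooting is carried out as in Steps 3--5 of Proposition~\ref{prop.first}. For $K$ at one end of its range I would show, using that $Z$ is non-decreasing along the flow, that the orbit crosses the parabolic cylinder \eqref{cyl} at a point with $Z>X$; Lemma~\ref{lem.inner} (whose hypotheses already include $\lambda=-\beta/2\alpha$) then forces the orbit to stay in $\{Y<0\}$, so it is non-increasing and, by the local analysis at infinity, must emanate from $Q_1$ with $f'(0)<0$, in the spirit of Proposition~\ref{prop.second} and Remark~(a) that follows it. For $K$ at the opposite end, the reduction of \eqref{PSsyst1} to the invariant plane $\{X=0\}$ (where $\dot Z=0$ and $\dot Y=-(Y+\beta/2\alpha)^2$, producing a connection from $Q_2$ down to $P_0^{-\beta/2\alpha}$), combined with continuous dependence and the plane $\{Y=y_0\}$ with $y_0>1$, shows exactly as in Step 4 of Proposition~\ref{prop.first} that the orbit comes out of $Q_2$ and hence contains a profile with a positive change of sign and no interface. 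Both parameter sets are open since $Q_1$ and $Q_2$ are unstable nodes, so the threshold value $\overline K$ --- the supremum of the parameters giving a $Q_1$-orbit contained in $\{Y<0\}$ --- lies in neither; as in Step 5 of Proposition~\ref{prop.first}, the corresponding orbit must therefore emanate from $P_0$, from $P_2$, or from $Q_1$ with $f'(0)=0$, the point $Q_5$ being ruled out because all nearby orbits have $(f^m)'(0)\le0<K$ for the profiles coming from $Q_5$ as in \eqref{beh.Q5}. In each case the orbit contains a good profile (of type (P1), or with the behavior \eqref{beh.P2}, or with the behavior \eqref{beh.P0}), and it enters $P_0^{-\beta/2\alpha}$, hence has its interface exactly at $\xi=\xi_{max}$ according to Definition~\ref{def1}.

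The heart of the matter --- and the step I expect to cost the most --- is the degenerate local analysis at $P_0^{-\beta/2\alpha}$: because two eigenvalues vanish and one of the null directions lies along the parabola of critical points, the linear integration used for Proposition~\ref{prop.first} collapses, and one must substitute a refined expansion (presumably carrying logarithmic corrections) obtained either from a careful center-manifold computation on \eqref{PSsyst2} or from a rescaling adapted to the degenerate direction, in order to (i) prove that the orbits entering through $\{X>0\}$ genuinely form a monotone one-parameter family and (ii) locate that family with respect to the cylinder \eqref{cyl} so that the two extremal regimes above are effectively attained. A subsidiary point is that an orbit lying in $\{X>0\}\cap\{Y<0\}$ has $Z$ increasing, so it can converge to $P_0^{-\beta/2\alpha}$ only if it ``climbs all the way'' to $Z=\beta^2/4\alpha^2$; by the monotonicity of the $Z$-component and the fact, recorded in the Remark after Lemma~\ref{lem.P1} and in Proposition~\ref{prop.att}, that $Z=\beta^2/4\alpha^2$ is attained only at $\lambda=-\beta/2\alpha$, the shooting cannot degenerate through an orbit settling on an interior point $P_0^{\mu}$, $\mu\in(-\beta/2\alpha,0)$, of the attracting arc, which is exactly what is needed for the limiting orbit at $\overline K$ to reach the maximum point of the parabola.
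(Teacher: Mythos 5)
Your overall architecture is the right one and coincides with the paper's: parameterize the orbits entering $P_0^{-\beta/2\alpha}$ through $\{X>0\}$ by a single real parameter, identify the two extremal regimes (orbits from $Q_1$ trapped in $\{Y<0\}$ via Lemma \ref{lem.inner} on one side, orbits from $Q_2$ obtained by perturbing the connection inside $\{X=0\}$ on the other), and conclude by openness of both regimes exactly as in Step 5 of Proposition \ref{prop.first}. That endgame, as you describe it, is essentially the paper's.

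The genuine gap is that the step you yourself flag as ``the heart of the matter'' is never carried out, and your guess at its resolution points in the wrong direction. Because the exponent $-\tfrac{2}{m-1}-\tfrac{2}{(\sigma+2)\lambda}$ vanishes at $\lambda=-\beta/2\alpha$, the family \eqref{interm7} degenerates and you are left with no parameter $K$ to shoot with; everything downstream (monotonicity in $K$, identification of the endpoints of the family, the location of the large-$K$ orbits relative to the cylinder \eqref{cyl}) depends on actually producing the family. You announce ``a refined expansion (presumably carrying logarithmic corrections)'', but the correct structure here is not logarithmic: after the further linear normalization \eqref{PSvar4} one finds, from the reduced system $\dot X_2=-\tfrac{(m-1)\beta}{2\alpha}X_2$, $\dot Y_2=-\tfrac{1}{(m-1)\beta}Y_2^2$, a one-parameter family of \emph{non-analytic} center manifolds $X_2=K\exp\bigl(-\tfrac{(m-1)^2\beta^2}{2\alpha Y_2}\bigr)+o(\cdot)$, $K\ge0$, which are exponentially flat in $Y_2$. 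It is precisely this exponential form that makes the two limits of the family the planes $\{X_2=0\}$ (at $K=0$, giving the $Q_2$-regime) and $\{Y_2=0\}$ (as $K\to\infty$, which in the original variables is $Y=-\beta/2\alpha-(C/D)X<-\beta/2\alpha$ and hence forces the large-$K$ orbits inside the cylinder or across it at points with $Z>X$, triggering Lemma \ref{lem.inner}). Without this computation --- or an equivalent substitute --- item (ii) of your own to-do list cannot be completed, so the two extremal regimes are not actually attained and the shooting argument does not close. Your subsidiary worry about the limiting orbit settling on an interior point $P_0^{\mu}$ is moot once the family is correctly built, since by construction every member already enters $P_0^{-\beta/2\alpha}$.
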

\begin{proof}
This proof is based on the same idea as the previous one, by classifying the trajectories entering $P_0^{\lambda}$ through the tangency to an explicit one-parameter family of orbits. This is much more involved for $\lambda=-\beta/2\alpha$ since we deal with a \emph{very interesting case of a center-stable manifold} joining a one-dimensional stable manifold and a one-parameter family of center manifolds near $P_0^{\lambda}$ which are \emph{not analytic} and have an exponential form that will be made precise below. We divide as usual the proof into several steps for easiness of the reading.

\medskip

\noindent \textbf{Step 1.} In a first step, we translate the point $P_0^{\lambda}$ with $\lambda=-\beta/2\alpha$ to the origin of the space and then put it into a \emph{normal form}. Let us first pass again to the new variables $(X_1,Y_1,Z_1)$ as in \eqref{PSvar3} (where $X_1=X$ to ease the notation) and to the corresponding autonomous system \eqref{PSsyst2}, taking into account the noticeable difference that the linear term in $Y_1$ in the equation for $\dot{Y}_1$ in \eqref{PSsyst2} vanishes. We further perform the following change of variables
\begin{equation}\label{PSvar4}	
X_2=X_1, \qquad Y_2 =CX_1+DY_1, \qquad Z_2 = AX_1+BZ_1,
\end{equation}
where
$$
A=\frac{(\sigma-2)\beta^2}{\alpha^2}, \ B=\frac{2\beta(m-1)}{\alpha}, \ C=\frac{2\alpha(m-1)+\beta(m+\sigma-3)}{m-1}, \ D = (m-1)\beta,
$$
to obtain the following system
\begin{equation}\label{PSsyst3}
\left\{\begin{array}{ll}\dot{X}_2=-\frac{(m-1)\beta}{2\alpha}X_2-\frac{(2\alpha+3\beta)(m-1)+\beta(\sigma-2)}{\beta(m-1)}X_2^2+\frac{1}{\beta}X_2Y_2,\\
\dot{Y}_2=-\frac{\alpha}{2}Z_2-\frac{1}{(m-1)\beta}Y_2^2+EX_2Y_2-FX_2^2,\\
\dot{Z}_2=(\sigma-2)\left[\frac{\beta}{\alpha^2}X_2Y_2+X_2Z_2-\frac{\beta(\beta m\sigma+2\alpha(m-1)+\beta m-3\beta)}{(m-1)\alpha^2}X_2^2\right],\end{array}\right.
\end{equation}
where
\begin{equation*}
\begin{split}
&E=\frac{2\alpha m^2+\beta\sigma(m+1)-2\alpha-4\beta}{(m-1)^2\beta}, \\ &F=\frac{[m\beta(\sigma-2)+(2m\beta+2m\alpha-\beta)(m-1)][(2\alpha+\beta)(m-1)+\beta(\sigma-2)]}{(m-1)^3\beta}.
\end{split}
\end{equation*}

\medskip

\noindent \textbf{Step 2.} We readily infer from the first and third equations of the system \eqref{PSsyst2} that in a sufficiently small neighborhood of the point $(X_1,Y_1,Z_1)=(0,0,0)$ we have
$$
\frac{dZ_1}{dX_1}\sim-\frac{(\sigma-2)\lambda(\alpha\lambda+\beta}{\alpha(m-1)\lambda}=-\frac{\sigma-2}{\sigma+2},
$$
whence we can write $Z_1=-(\sigma-2)X_1/(\sigma+2)+o(X_1)$. Taking into account the precise values of $A$ and $B$ in \eqref{PSvar4} and the fact that $X_2=X_1$, $Z_2=AX_1+BZ_1$, we deduce very easily from the above first order approximation that in a sufficiently small neighborhood at the origin of the system \eqref{PSsyst3} we have $Z_2=o(X_2)$. We now pass to the study of the center manifolds (that in this case will not be unique) near the origin for the system \eqref{PSsyst3} having the form $X_2=h(Y_2,Z_2)$ for suitable functions $h$. At a formal level, neglecting higher order terms and also neglecting $Z_2$ (which is of lower order with respect to $X_2$ as proved and we expect to be of also lower order with respect to $Y_2^2$) we are left with the reduced system
$$
\dot{X}_2=-\frac{(m-1)\beta}{2\alpha}X_2, \qquad \dot{Y}_2=-\frac{1}{(m-1)\beta}Y_2^2
$$
which by integration leads to the one-parameter family of solutions
\begin{equation}\label{center.man}
X_2=K\exp\left(-\frac{(m-1)^2\beta^2}{2\alpha Y_2}\right)=:Kh(Y_2), \qquad K\geq0.
\end{equation}
We show that indeed the one-parameter family in \eqref{center.man} are the first order approximations to a one-parameter family of non-analytic center manifolds near the origin for the system \eqref{PSsyst3}. Indeed, a center manifold is given by the generic expression $X_2=h(Y_2,Z_2)$ and satisfies a rather complicated partial differential equation according to the Local Center Manifold Theorem, see for example \cite[Section 2.5]{Carr} or \cite[Theorem 1, Section 2.12]{Pe} . We plug in the equation of the center manifold the function $Kh(Y_2)$ as in \eqref{center.man} (for any $K\geq0$). After some calculations, the left-hand side of the equation of the center manifold becomes
\begin{equation*}
\begin{split}
N(Y_2,Z_2)&=\frac{K(m-1)^2\beta^2}{2\alpha Y_2^2}\exp\left(-\frac{(m-1)^2\beta^2}{2\alpha Y_2}\right)\left[-\frac{\alpha}{2}Z_2+EKY_2\exp\left(-\frac{(m-1)^2\beta^2}{2\alpha Y_2}\right)\right.\\
&\left.-FK^2\exp\left(-\frac{(m-1)^2\beta^2}{\alpha Y_2}\right)\right]
+\frac{K}{\beta}Y_2\exp\left(-\frac{(m-1)^2\beta^2}{2\alpha Y_2}\right)\\&-K^2\frac{(2\alpha+3\beta)(m-1)+\beta(\sigma-2)}{(m-1)\beta}\exp\left(-\frac{(m-1)^2\beta^2}{\alpha Y_2}\right)\\
&=o\left(\exp\left(-\frac{(m-1)^2\beta^2}{2\alpha Y_2}\right)\right),
\end{split}
\end{equation*}
since as we have proved $Z_2=o(X_2)$, thus it is of lower order term than the exponential in $h(Y_2)$. The coefficients $E$ and $F$ above are the ones of the system \eqref{PSsyst3}. We thus obtain a one-parameter family of center manifolds in a neighborhood of the point $(X_2,Y_2,Z_2)=(0,0,0)$ having the form
\begin{equation}\label{center.man2}
X_2=K\exp\left(-\frac{(m-1)^2\beta^2}{2\alpha Y_2}\right)+o\left(\exp\left(-\frac{(m-1)^2\beta^2}{2\alpha Y_2}\right)\right), \ K\geq0.
\end{equation}
There is an orbit entering the origin of the system \eqref{PSsyst3} tangent to any of these center manifolds, which allows us to perform a shooting method with respect to the parameter $K$.

\medskip

\noindent \textbf{Step 3. The shooting.} Since we are interested in the behavior only in a sufficiently small neighborhood of the origin in the system \eqref{PSsyst3}, we let $\epsilon>0$ sufficiently small such that if $0<X_2<\epsilon$, $0<Y_2<\epsilon$ we have $\dot{X}_2<0$ in the first equation of the system \eqref{PSsyst3}. Indeed, this is easily done by noticing that the dominating linear term in that equation has a negative coefficient. We infer that if an orbit enters the region $0<X_2<\epsilon$, it will remain there as the $X_2$ component will decrease to zero along the orbit and then the same happens to the component $Y_2$ accordingly to \eqref{center.man}. Moreover, given any point $(X_2,Y_2)$ such that $X_2\in(0,\epsilon)$, $Y_2\in(0,\epsilon)$ and $X_2^2+Y_2^2=\epsilon^2$, there exists an orbit on the family of center manifolds passing through that point on the projection on the $(X_2,Y_2)$-plane, namely the orbit with the value of $K$ given by
\begin{equation}\label{interm20}
K=X_2\exp\left(\frac{(m-1)^2\beta^2}{2\alpha Y_2}\right).
\end{equation}
It is easy to see that the limits of this family of center manifolds are given on one side by the plane $\{X_2=0\}$, corresponding to $K=0$, and on the other side by the plane $\{Y_2=0\}$, corresponding to the limit as $K\to\infty$. Coming back to the initial variables $(X,Y,Z)$, since $X_2=X$ and inside the invariant plane $\{X=0\}$ we have one orbit entering the critical point $P_0^{\lambda}$ with $\lambda=-\beta/2\alpha$ through the horizontal line $Z=\beta^2/4\alpha^2$ coming from the unstable node $Q_2$ at infinity, on the one hand we can just repeat Step 4 of the proof of Proposition \ref{prop.first} to obtain that there exists $K_1>0$ such that for any $K\in(0,K_1)$ the orbits entering tangent to the center manifolds \eqref{center.man} come from $Q_2$. On the other hand, the plane $\{Y_2=0\}$ becomes in the initial variables of the system \eqref{PSsyst1} the plane of equation
$$
CX+D\left(Y+\frac{\beta}{2\alpha}\right)=0, \ \ C=\frac{2\alpha(m-1)+\beta(m+\sigma-3)}{m-1}, \ \ D = (m-1)\beta,
$$
that means
$$
Y=-\frac{\beta}{2\alpha}-\frac{C}{D}X<-\frac{\beta}{2\alpha}.
$$
Since for $K$ large enough the orbits tangent to center manifolds as in \eqref{interm20} approach the plane $\{Y_2=0\}$ we infer that there is $K_2>0$ such that the orbits corresponding to any $K\in(K_2,\infty)$ enter the critical point $P_0^{\lambda}$ with $\lambda=-\beta/2\alpha$ after passing by points with coordinates $Y<-\beta/2\alpha$ lying inside the parabolic cylinder \eqref{cyl} provided $\epsilon$ is small enough. Thus, such orbit either enters the critical point $P_0^{\lambda}$ with $\lambda=-\beta/2\alpha$ through the interior of the cylinder \eqref{cyl}, or it first crosses the boundary of the cylinder in the region $\{Y>-\beta/2\alpha\}$, but in such case at the crossing point we have $X=X_2<\epsilon<Z$ (since $Z$ is close enough to $\beta^2/4\alpha^2$). In both cases Lemma \ref{lem.inner} gives that such orbits live forever in the region $\{Y<0\}$ and come from the unstable node $Q_1$ at infinity. We can then completely repeat Step 5 in the proof of Proposition \ref{prop.first} to reach the conclusion.
\end{proof}

We gather in Figure \ref{fig1} plots of the local manifolds in a neighborhood of all the three types of points on the critical parabola (the first half of it, the vertex and the second half of it) as proved in Propositions \ref{prop.second}, \ref{prop.first} and \ref{prop.max}.

\begin{figure}[ht!]
  \begin{center}
  \includegraphics[width=11cm,height=8cm]{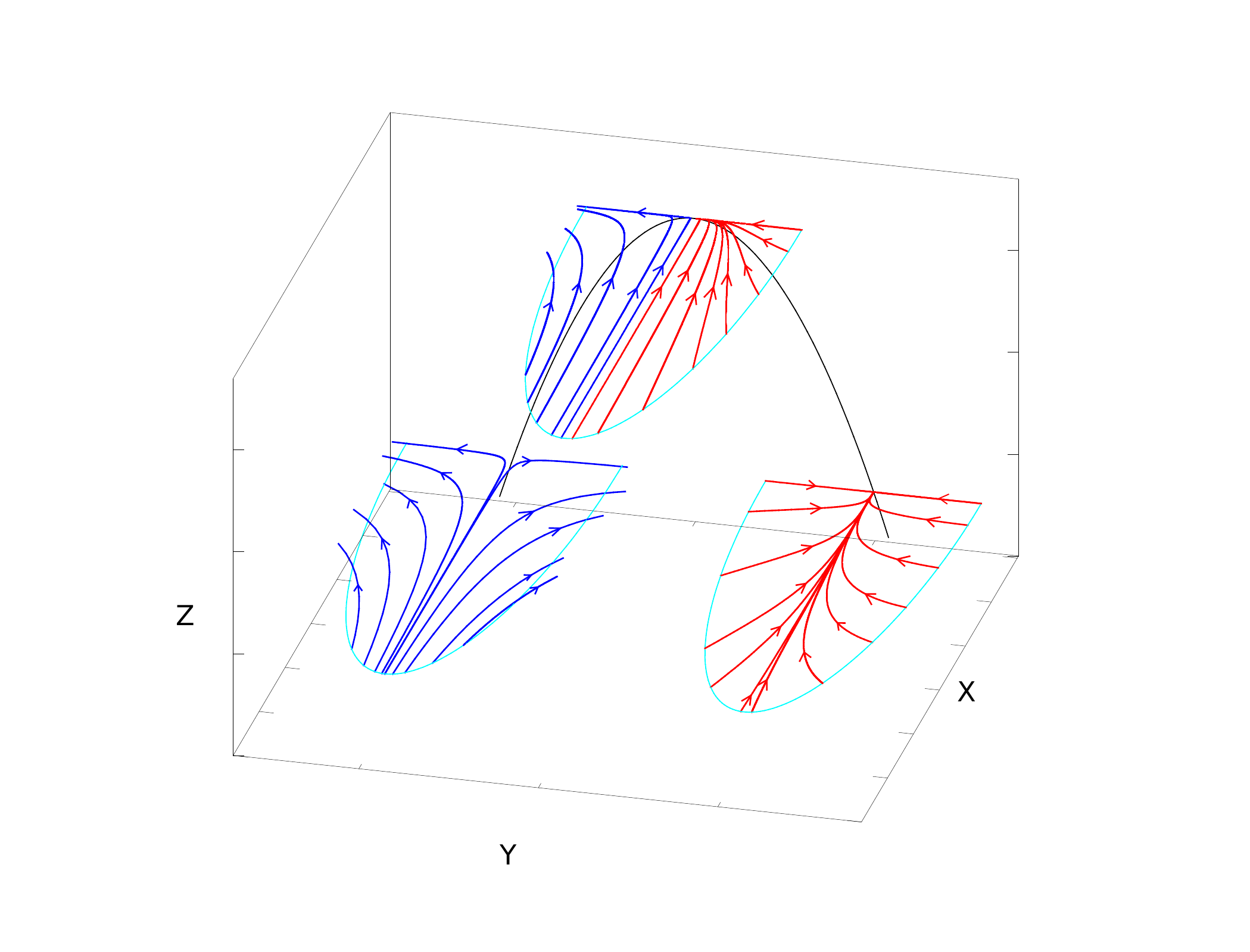}
  \end{center}
  \caption{The local manifolds in a neighborhood of the critical points $P_0^{\lambda}$}\label{fig1}
\end{figure}

\noindent \textbf{Remark.} Let us notice here that the behavior of the orbits in Proposition \ref{prop.max} and its proof give a very interesting example of a \emph{center-stable manifold} near a critical point. This example is remarkable since, with respect to standard theory (see for example \cite{Ke67}) we do not compose the center-stable manifold with all the center manifolds of $P_0^{\lambda}$ with $\lambda=-\beta/2\alpha$ but instead we neglect one part of them (the one lying on the critical parabola \eqref{parabola}) and we generate the center-stable manifold between the remaining part of the center manifolds ranging from the one lying inside the invariant plane $\{X=0\}$ (the horizontal line $\{Z=\beta^2/4\alpha^2\}$) to the trajectory tangent to the eigenvector of the unique negative eigenvalue of the linearized system near $P_0^{\lambda}$.

We are now in a position to end the proof of Theorem \ref{th.exist}.
\begin{proof}[Proof of Theorem \ref{th.exist}]
This is now an immediate consequence of Propositions \ref{prop.first} and \ref{prop.max} and of the fact that the interface points $\xi_0\in(0,\xi_{max}]$ are in bijection with the points of the first half of the parabola \eqref{parabola} through the definition of the variable $Z$ in \eqref{PSvar1}, namely
$$
\xi_0=\left(\frac{\alpha^2}{m}Z\right)^{1/(\sigma-2)}\in(0,\xi_{max}],
$$
for any $Z=-\lambda^2-\frac{\beta}{\alpha}\lambda$ and $\lambda\in[-\beta/2\alpha,0)$.
\end{proof}

\section{Classification of the orbits}\label{sec.class}

In this section we classify the orbits containing good profiles with interface (entering points $P_0^{\lambda}$ with $\lambda\in[-\beta/2\alpha,0)$) according to the critical point from where they begin, which defines the behavior of the profiles at $\xi=0$. Some of the proofs are highly technical and part of the calculations in them were performed with the assistance of a symbolic calculation software. Before passing to the main statements, let us recall the following very useful result.
\begin{lemma}\label{lem.monot}
The component $X$ is decreasing and the component $Y$ is also decreasing in the half-space $\{Y\geq0\}$ along the trajectory going out of the point $P_2$.
\end{lemma}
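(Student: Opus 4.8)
\textbf{Proof proposal for Lemma \ref{lem.monot}.}

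The plan is to track the sign of the relevant derivatives along the unique orbit emanating from $P_2$ on its one-dimensional unstable manifold, using the explicit eigenvector \eqref{interm.bis} to start the analysis and then propagating the sign information via the structure of the system \eqref{PSsyst1}. First I would recall from Lemma \ref{lem.P2} that near $P_2$ this orbit is tangent to $e_3$, whose first two components are strictly negative; hence, immediately after leaving $P_2$, the orbit has $X(\eta)<X(P_2)$ and $Y(\eta)<Y(P_2)$, so both $\dot X<0$ and $\dot Y<0$ hold in a right-neighborhood of the departure. Note also that $Y(P_2)>0$, so the orbit starts in the half-space $\{Y>0\}$. The goal is to show these two monotonicities persist on the whole portion of the orbit lying in $\{Y\geq0\}$.

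The key step for the $X$-component is the first equation $\dot X = X[(m-1)Y-2X]$. Since $X>0$ along the orbit (the plane $\{X=0\}$ is invariant), the sign of $\dot X$ is that of $(m-1)Y-2X$. So I would show that the quantity $(m-1)Y-2X$ stays negative as long as $Y\geq0$. At $P_2$ one computes $(m-1)Y(P_2)-2X(P_2)=0$ from \eqref{not.P2}, and the tangency to $e_3$ gives, to leading order, $(m-1)Y-2X \sim \big[(m-1)e_3^{(Y)}-2e_3^{(X)}\big]\cdot(\text{positive})$; a direct check of the two components of \eqref{interm.bis} shows this combination is negative, so $(m-1)Y-2X<0$ just after $P_2$. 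To see it cannot return to zero while $Y\geq0$, I would examine the plane (surface) $\{(m-1)Y-2X=0\}$ and compute the flow direction across it: differentiating $(m-1)Y-2X$ along \eqref{PSsyst1} and evaluating on that surface (where $\dot X=0$, so $X$ is momentarily stationary) leaves $\tfrac{d}{d\eta}[(m-1)Y-2X] = (m-1)\dot Y = (m-1)(-Y^2-\tfrac{\beta}{\alpha}Y+X-XY-Z)$; on the surface we substitute $X=\tfrac{m-1}{2}Y$ and use $Y\geq 0$, $Z\geq 0$ to show this is $\leq 0$ (the terms $-Y^2-\tfrac{\beta}{\alpha}Y-Z$ are $\leq 0$ and $X-XY=\tfrac{m-1}{2}Y(1-Y)$ must be dominated — here one uses that on the relevant part of the orbit $Z$ is already bounded below, or that $Y$ is small, which may require a short auxiliary estimate). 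This shows the surface acts as a barrier that the orbit cannot cross back through while $Y\geq 0$, giving $\dot X<0$ throughout $\{Y\geq 0\}$.

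For the $Y$-component, once we know $\dot X<0$ and $X$ is decreasing, I would use $\dot Y = -Y^2-\tfrac{\beta}{\alpha}Y+X-XY-Z = -Y^2-\tfrac{\beta}{\alpha}Y+X(1-Y)-Z$. In the half-space $\{Y\geq 0\}$ the terms $-Y^2$, $-\tfrac{\beta}{\alpha}Y$ and $-Z$ are all $\leq 0$, so the only potentially positive contribution is $X(1-Y)$, i.e.\ when $0\leq Y<1$. The plan is to show that on the orbit this term is always outweighed: near $P_2$ this is checked via the eigenvector, and then one propagates it by considering the surface $\{\dot Y=0\}$ and showing the orbit, once in $\{\dot Y<0\}$, cannot cross it back while $Y\geq 0$ — computing $\tfrac{d}{d\eta}\dot Y$ on $\{\dot Y=0\}$ and using the already-established facts that $X$ is decreasing, $Z$ is non-decreasing, and $Y$ is (locally) decreasing. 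I expect the main obstacle to be precisely this last closing argument: controlling the indefinite-sign term $X(1-Y)$ in $\dot Y$ (equivalently, ruling out a re-crossing of $\{\dot Y=0\}$) is delicate because it couples $X$, $Y$ and $Z$ simultaneously, and it is likely where one must invoke the monotonicity of $Z$ together with a lower bound on $Z$ coming from the location of $P_2$, or alternatively restrict attention to a small enough neighborhood and then bootstrap. Once both $\dot X<0$ and $\dot Y<0$ are secured on $\{Y\geq 0\}$, the lemma follows.
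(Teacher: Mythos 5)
The paper does not actually prove this lemma: it defers entirely to \cite[Lemma 6.1]{IS20b}, observing only that the argument there carries over when $m+p=2$. So your proposal has to stand on its own, and as written it does not close. The initial decrease of $X$ and $Y$ near $P_2$ is fine (it follows from the negativity of the first two components of the outgoing eigendirection; note in passing that $e_3$ as printed in \eqref{interm.bis} does not satisfy $M(P_2)e_3=\lambda_3e_3$ --- the first row forces $e_3^{(Y)}=\sigma e_3^{(X)}/(m-1)$, whereas with the printed components your combination $(m-1)e_3^{(Y)}-2e_3^{(X)}$ comes out proportional to $2\alpha-\sigma>0$, not negative; with the corrected eigenvector it equals $(\sigma-2)e_3^{(X)}<0$ as you need). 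The real problem is the barrier step for $X$. On the surface $\{(m-1)Y=2X\}$ one has, substituting $X=\tfrac{m-1}{2}Y$ and using $\tfrac{\beta}{\alpha}=\tfrac{2(m-1)}{\sigma+2}$,
\begin{equation*}
\frac{d}{d\eta}\bigl[(m-1)Y-2X\bigr]=(m-1)\dot Y=(m-1)\left[\frac{m+1}{2}\,Y\bigl(Y(P_2)-Y\bigr)-Z\right],
\end{equation*}
which is \emph{strictly positive} on the whole portion of the surface where $0<Y<Y(P_2)$ and $Z<\tfrac{m+1}{2}Y(Y(P_2)-Y)$. This is exactly the regime the orbit occupies just after leaving $P_2$, where $Y(P_2)-Y$ and $Z$ are both small and of comparable order. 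So the surface is not a one-way barrier in the half-space $\{Y\geq0\}$, and your claim that the orbit ``cannot cross back through'' it is unsubstantiated. The patches you suggest do not repair this: taking ``$Y$ small'' makes it worse (the positive linear term $\tfrac{(m-1)(\sigma-2)}{2(\sigma+2)}Y$ then dominates $-\tfrac{m+1}{2}Y^2$), and a lower bound for $Z$ in terms of $Y(P_2)-Y$ \emph{along the orbit} is precisely the coupled estimate whose absence is the gap.

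The second half has the same status: you candidly identify that ruling out a re-crossing of $\{\dot Y=0\}$ is ``the main obstacle'' and then do not carry it out, so the $Y$-monotonicity is asserted rather than proved. A workable route (closer in spirit to what the authors do in \cite{IS20b}) is to argue by contradiction on the \emph{first} time $\eta_0$ at which either monotonicity fails: if $\dot X(\eta_0)=0$ first, then $\ddot X(\eta_0)=(m-1)X\dot Y(\eta_0)\geq0$ forces $\dot Y(\eta_0)\geq0$, while $Y$ decreasing on $[0,\eta_0)$ forces $\dot Y(\eta_0)\leq0$; hence both derivatives vanish simultaneously and one must push to second order, using $\dot Z=(\sigma-2)XZ>0$ to get a sign contradiction. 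Whichever route you take, the crux is a quantitative comparison between $Z$ and $Y(P_2)-Y$ (equivalently between $\dot X$ and $\dot Y$) along the specific orbit out of $P_2$, and that comparison is missing from your argument.
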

The proof of Lemma \ref{lem.monot} is given in \cite[Lemma 6.1]{IS20b}, since a simple inspection of the proof shows that it holds true also when $m+p=2$ provided $\sigma>2$. The core of the classification is controlling the unique orbit going out of the critical point $P_2$. This orbit has a different behavior according to the magnitude of $\sigma>2$. We begin with its behavior for $\sigma$ sufficiently small.
\begin{proposition}\label{prop.small}
There exists $\sigma_0>2$ such that for any $\sigma\in(2,\sigma_0)$ the only orbit going out of $P_2$ enters one of the points $P_0^{\lambda}$ with $\lambda\in[-\beta/2\alpha,0)$. Moreover, for these values of $\sigma$ all the orbits going out of $P_0$ also enter some of the points $P_0^{\lambda}$ with $\lambda\in[-\beta/2\alpha,0)$.
\end{proposition}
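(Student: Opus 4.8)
\textbf{Proof plan for Proposition \ref{prop.small}.}

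The plan is to establish both assertions by controlling, for $\sigma$ close to $2$, the global behavior of the single orbit $\mathcal{O}_{P_2}$ leaving $P_2$ and of the two-parameter family of orbits leaving $P_0$, showing they are all trapped in a region of the phase space from which the only possible exit is into the critical parabola \eqref{parabola} at a point $P_0^{\lambda}$ with $\lambda\in[-\beta/2\alpha,0)$. First I would set up the limiting picture at $\sigma=2$: as $\sigma\to2^+$ the exponents satisfy $\alpha\to\infty$, $\beta\to\infty$ with $\beta/\alpha=(m-p)/(\sigma+2)\to(m-p)/4$ finite, while $\sigma-2\to0$, so the $\dot Z$ equation degenerates ($\dot Z=(\sigma-2)XZ\to0$) and $Z$ becomes essentially frozen along trajectories; moreover $X(P_2)\to0$ and $Y(P_2)\to0$, so $P_2$ itself collapses onto the endpoint $P_0$ of the parabola. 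The strategy is to use this continuity: at the degenerate parameter value the relevant orbit stays in a compact, invariant "good" region touching the parabola, and by continuous dependence of the system \eqref{PSsyst1} (more precisely of its suitably rescaled version) on the parameter $\sigma$, the same containment persists for $\sigma\in(2,\sigma_0)$ with $\sigma_0$ chosen small enough.

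The key steps, in order, would be: (i) rescale the system to remove the blow-up of $\alpha,\beta$ — e.g. work with $\widetilde Y=(\alpha/\beta)Y$ or absorb $\beta/\alpha$ into $Y$ — so that the vector field has a well-defined, smooth limit at $\sigma=2$ and $P_2$, $P_0$, and the parabola \eqref{parabola} all vary continuously; (ii) identify an explicit trapping region $\mathcal{R}$ bounded by portions of the invariant planes $\{X=0\}$, $\{Z=0\}$, the parabolic cylinder \eqref{cyl}, and perhaps the plane $\{Y=-\beta/2\alpha\}$, and show that along $\mathcal{O}_{P_2}$ the component $X$ decreases and $Z$ increases (using Lemma \ref{lem.monot} for the part where $Y\ge0$, and Lemma \ref{lem.inner} once $Y$ has become negative after crossing $\{Y=0\}$) so that the orbit cannot escape to infinity and, by Proposition \ref{prop.att} together with the absence of limit cycles (monotone $X$, monotone $Z$), must enter a critical point of $\mathcal{S}\cup\{P_0,P_1,P_2,Q_i\}$; (iii) rule out the alternative targets: it cannot return to $P_2$ (by Lemma \ref{lem.monot} the orbit leaves $P_2$ monotonically), it cannot go to $P_0$ or $P_1$ since those are reached only inside $\{Z=0\}$ and $Z$ has strictly increased, it cannot reach $Q_1,Q_2,Q_5$ (unstable nodes — nothing enters them from the finite region) and not $Q_3$ (which would require crossing $\{Y=-\beta/2\alpha\}$, incompatible with the $F(X,Z)$ sign analysis of Proposition \ref{prop.second} once $\sigma$ is close to $2$ so that $X(P_2)$ is tiny and $Z$ stays below $\beta^2/4\alpha^2$), and not $Q_4$ (no profiles there, and one shows the orbit stays in a bounded $Z$-slab); (iv) conclude $\mathcal{O}_{P_2}$ enters some $P_0^{\lambda}$, and since $Z$ along it exceeds the value at $P_0$ but stays $\le\beta^2/4\alpha^2$, necessarily $\lambda\in[-\beta/2\alpha,0)$; (v) repeat essentially the same trapping argument for the orbits leaving $P_0$ on its center manifold (Lemma \ref{lem.P0}), which enter the half-space $\{X>0\}$ with $Y$ near $0$ and small $Z$, again landing on the first half of the parabola.

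The main obstacle I expect is step (ii)–(iii): proving that $\mathcal{O}_{P_2}$ actually crosses the plane $\{Y=0\}$ (rather than staying in $\{Y>0\}$ forever, which would force it toward $Q_5$ or escape) and does so at a point with $X<Z$ so that Lemma \ref{lem.inner} applies. Controlling the sign of $\dot Y=-Y^2-(\beta/\alpha)Y+X-XY-Z$ along the orbit is delicate: near $P_2$ one has $Y>0$ small, and whether $Y$ subsequently increases or decreases depends on the competition between the positive term $X$ and the negative terms $-(\beta/\alpha)Y-Z$. Here the smallness of $\sigma-2$ should be decisive — it makes $X(P_2)$ of order $(\sigma-2)^2$, hence tiny compared to $\beta/\alpha$ and to the already-positive $Z$-coordinate inherited along the orbit — forcing $\dot Y<0$ and driving $Y$ down through $0$ while $X$ stays small and $Z$ grows; quantifying this and pinning down the threshold $\sigma_0$ explicitly is the technical heart of the argument, and is presumably where the symbolic computation alluded to in the text enters.
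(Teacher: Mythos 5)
Your plan has the right overall architecture --- trap the orbit leaving $P_2$ in a compact region whose only internal critical points are the $P_0^{\lambda}$, and conclude via monotonicity of $X$ and $Z$ and the absence of limit cycles --- and you correctly isolate the crux: one must show that along this orbit $Z$ stays below the vertex value $\beta^2/4\alpha^2$ and $Y$ stays bounded below, so that the orbit can neither cross $\{Y=-\beta/2\alpha\}$ toward $Q_3$ nor let $Z$ overshoot the parabola. But that crux is exactly what you leave unproved, and it is essentially the entire content of the proposition. The paper's proof consists of constructing two explicit barrier planes, $cY+Z=d$ with $c=(m-1)^2/(\sigma+2)^2$, $d=(m-1)^2/\bigl(2(\sigma+2)^2\bigr)$ as in \eqref{plane1}, and $aX+Z=b$ as in \eqref{plane2}, verifying by direct computation that the flow of \eqref{PSsyst1} crosses each of them in only one direction in the relevant ranges of $X$ and $Y$ (the bounds $Y>Y^*$, $X<X^*$ of \eqref{interm9}), checking that these planes do not meet the cylinder \eqref{cyl} where they must not and in particular keep $Z$ strictly below $\beta^2/4\alpha^2$, and finally checking that $X(P_2)<X^*$ and $Y(P_2)<1/2$ for $\sigma$ near $2$ so that the orbit actually starts inside the first trapping region $D_1$. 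Without producing such barriers, or an equivalent quantitative estimate, the argument does not close; note in particular that the exclusion of $Q_3$ cannot be delegated to Proposition~\ref{prop.second}, which only says that crossing $\{Y=-\beta/2\alpha\}$ forces $Z>\beta^2/4\alpha^2$ --- the upper bound on $Z$ is the thing to be proved, not a hypothesis one may assume.

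Second, the organizing device you propose, namely passing to the degenerate limit $\sigma=2$ and invoking continuous dependence, does not work as stated. The vector field of \eqref{PSsyst1} is already smooth in $\sigma$ down to $\sigma=2$ (only $\beta/\alpha\to(m-1)/2$ and $\sigma-2\to0$ appear, so no rescaling is needed), but at $\sigma=2$ the point $P_2$ collapses onto $P_0=(0,0,0)$ and every plane $\{Z=\mathrm{const}\}$ becomes invariant; the object you want to track --- the unique orbit leaving $P_2$ into $\{Z>0\}$ --- has no limit object at $\sigma=2$, so there is nothing to perturb from. Making a limiting argument rigorous would require controlling this family of orbits uniformly as $P_2\to P_0$, which is precisely the uniform information the explicit barriers encode. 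A smaller point: your step (v) for the orbits leaving $P_0$ cannot simply "repeat the same trapping argument", because Lemma~\ref{lem.monot} does not apply to them --- by \eqref{interm1} the component $X$ initially \emph{increases} along the center manifold of $P_0$ --- so one needs a separate invariant region (the paper uses $\{0\le X\le X(P_2),\ 0\le Y\le Y(P_2)\}$) to funnel these orbits into the region $D_1$ before the previous analysis takes over.
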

\begin{proof}
The proof is technical and based on limiting the orbits by barriers in the phase-space. More precisely, we build a region of the plane limited by a number of well-chosen planes together with the boundary of the parabolic cylinder \eqref{cyl} which cannot be left by an orbit from inside, and show that for $\sigma$ sufficiently close to 2 the orbit going out of $P_2$ enters this region. We divide it into several steps for the reader's convenience. Let us recall here that we have $1<m<2$, that is $0<m-1<1$, a bound that will be strongly used throughout this proof.

\medskip

\noindent \textbf{Step 1.} Let us consider as a first barrier the following plane
\begin{equation}\label{plane1}
cY+Z=d, \qquad c=\frac{(m-1)^2}{(\sigma+2)^2}, \ d=\frac{(m-1)^2}{2(\sigma+2)^2}.
\end{equation}
The normal direction to the plane is given by the vector $(0,c,1)$ and the flow of the system \eqref{PSsyst1} on this plane is given by the sign of the expression
\begin{equation*}
\begin{split}
H(X,Y)=&-\frac{(m-1)^2}{(\sigma+2)^2}Y^2-\frac{(m-1)^2)(\sigma-1)}{(\sigma+2)^2}XY+\frac{\sigma(m-1)^2}{2(\sigma+2)^2}X\\
&-\frac{(2\sigma+5-m)(m-1)^3}{(\sigma+2)^4}Y-\frac{(m-1)^4}{2(\sigma+2)^4}.
\end{split}
\end{equation*}
We want to have $H(X,Y)<0$. This is in particular satisfied when the following two conditions on $X$ and $Y$ hold true:
\begin{equation}\label{interm9}
Y>Y^*:=-\frac{m-1}{6(2\sigma+5-m)}, \qquad 0<X<X^*:=\frac{(m-1)^2}{3\sigma(\sigma+2)^2}.
\end{equation}
Indeed, assume that the bounds in \eqref{interm9} are in force. Then it is immediate to check that, on the one hand, the definition of $Y^*$ insures
\begin{equation}\label{interm10}
-\frac{(2\sigma+5-m)(m-1)^3}{(\sigma+2)^4}Y-\frac{(m-1)^4}{6(\sigma+2)^4}<0
\end{equation}
and on the other hand the definition of $X^*$ gives
\begin{equation}\label{interm11}
\frac{\sigma(m-1)^2}{2(\sigma+2)^2}X-\frac{(m-1)^4}{6(\sigma+2)^4}<0.
\end{equation}
Finally, we obtain the following bound for the product $XY$
$$
XY>X^*Y^*=-\frac{(m-1)^3}{18\sigma(2\sigma+5-m)(\sigma+2)^2}>-\frac{(m-1)^2}{6(\sigma-1)(\sigma+2)^2},
$$
whence
\begin{equation}\label{interm12}
-\frac{(m-1)^2)(\sigma-1)}{(\sigma+2)^2}XY-\frac{(m-1)^4}{6(\sigma+2)^4}<0.
\end{equation}
Summing up the inequalities in \eqref{interm10}, \eqref{interm11} and \eqref{interm12} we obtain that $H(X,Y)<0$ provided that the bounds in \eqref{interm9} hold true. The intersection between the plane \eqref{plane1} and the parabolic cylinder \eqref{cyl} is composed by two straight lines with constant values of $Y$ and $Z$ given by the two solutions of the equation
$$
P(Y)=Y^2+\left(\frac{2(m-1)}{\sigma+2}-\frac{(m-1)^2}{(\sigma+2)^2}\right)Y+\frac{(m-1)^2}{2(\sigma+2)^2}=0,
$$
where it is easy to check that the discriminant is positive. However, we are only interested in the range limited by \eqref{interm9}, thus let us remark that
$$
P(Y^*)=\frac{(m-1)^2(49\sigma^2-48m\sigma+244\sigma+12m^2-120m+304)}{36(2\sigma+5-m)^2(\sigma+2)^2}>0
$$
and
$$
P'(Y^*)=\frac{m-1}{\sigma+2}\left[2-\frac{\sigma+2}{6\sigma+3(5-m)}-\frac{m-1}{\sigma+2}\right]>0,
$$
where for the last inequalities we strongly made use of the fact that $\sigma+2>4$ and $m-1<1$. It thus follows that there is no intersection between the plane \eqref{plane1} and the parabolic cylinder \eqref{cyl} in the region $\{Y>Y^*\}$.

\medskip

\noindent \textbf{Step 2.} Consider now the plane
\begin{equation}\label{plane2}
aX+Z=b, \qquad a=b=\frac{(m-1)^2(3\sigma+7-m)}{3(\sigma+2)^2(2\sigma+5-m)}.
\end{equation}
The direction of the flow of the system \eqref{PSsyst1} on the plane \eqref{plane2} is given by the sign of the expression
$$
L(X,Y)=\frac{(m-1)^2(3\sigma+7-m)}{3(\sigma+2)^2(2\sigma+5-m)}X\left[-\sigma X+(m-1)Y+\sigma-2\right]
$$
and it is negative if $-\sigma X+(m-1)Y+\sigma-2<0$. Consider now the straight line $r_1$ inside the plane \eqref{plane2} whose projection on the plane $\{Z=0\}$ has the equation $-\sigma X+(m-1)Y+\sigma-2=0$  and the line $r_2$ obtained by the intersection of the planes \eqref{plane1} and \eqref{plane2}, whose projection on the plane $\{Z=0\}$ is
\begin{equation}\label{interm13}
Y=\frac{a}{c}X+\frac{d-b}{c}=\frac{3\sigma+7-m}{3(2\sigma+5-m)}X-\frac{m-1}{6(2\sigma+5-m)}=eX-f.
\end{equation}
The line $r_2$ intersects the plane $\{Y=0\}$ at the point $X=(m-1)/2(3\sigma+7-m)>X^*$, the latter inequality being equivalent to
$$
3\sigma^2+6(2-m)\sigma>2(m-1)(7-m)
$$
which is obviously true when $1<m<2$ and $\sigma>2$. Moreover, some easy calculations prove that fixing $X$ and $Z$ and letting $(X,Y_1,Z)$, respectively $(X,Y_2,Z)$ be the points on the line $r_1$, respectively $r_2$ for the $X$, $Z$ fixed, we have
$$
Y_2-Y_1=\left[\frac{3\sigma+7-m}{3(2\sigma+5-m)}-\frac{\sigma}{m-1}\right]X-\frac{m-1}{6(2\sigma+5-m)}+\frac{\sigma-2}{m-1}<0,
$$
provided $\sigma>2$ is sufficiently close to 2, giving that the line $r_2$ is more to the right than the line $r_1$. Moreover, the intersection between the plane \eqref{plane2} and the parabolic cylinder \eqref{cyl} is given by the curve inside \eqref{plane2} whose projection on the plane $\{Z=0\}$ writes
\begin{equation}\label{interm14}
X=\frac{1}{a}\left[Y^2+\frac{2(m-1)}{\sigma+2}Y+b\right]=:g(Y).
\end{equation}

\medskip

\noindent \textbf{Step 3.} Let us consider now the orbit going out of $P_2$. Consider first the region
$$
D_1=\left\{0\leq X\leq X^*, 0\leq Y\leq\frac{1}{2}, 0\leq Z\leq-cY+d \right\}
$$
with $c$, $d$ defined in \eqref{plane1}. The orbit going out of $P_2$ does this with
$$
X=X(P_2)=\frac{(m-1)^2(\sigma-2)}{2(m+1)(\sigma+2)}, \ \ Y(P_2)=\frac{(m-1)(\sigma-2)}{(m+1)(\sigma+2)}, \ \ Z=0.
$$
Thus, this orbit goes out into the region $D_1$ provided $\sigma$ is sufficiently close to 2 such that $X(P_2)<X^*$ and $Y(P_2)<1/2$. We infer from Lemma \ref{lem.monot} that the components $X$, $Y$ decrease along the orbit, thus these inequalities stay true all along the orbit. We also get from Step 1 above that, since at $P_2$ we have $Z=0<d-cY(P_2)=c(1/2-Y(P_2))$, we cannot cross the plane \eqref{plane1} since the direction of the flow on it is towards the interior of $D_1$. Thus the orbit going out of $P_2$ remains in the region $D_1$ until intersecting the plane $\{Y=0\}$. We now continue to "drive" the orbit going out of $P_2$ by considering the region
$$
D_2=\left\{0\leq X\leq X^*, eX-f\leq Y\leq0, -Y^2-\frac{2(m-1)}{\sigma+2}Y\leq Z\leq-cY+d \right\},
$$
where $e$, $f$ are defined in \eqref{interm13}. The orbit coming out of $P_2$ enters the region $D_2$ at $Y=0$. Indeed, we will always have $X\leq X^*$ by Lemma \ref{lem.monot} and it is obvious that $0<Z<d$ at $Y=0$ since we are just leaving the region $D_1$ introduced above. On the other hand Step 2 insures that, since $X<X^*$ and the intersection of the line $r_2$ introduced in \eqref{interm13} with the plane $\{Y=0\}$ occurs at a point $X>X^*$, we get $eX-f<0$ at $Y=0$. Once entered $D_2$, we show that our orbit stays in this region while $Y\geq eX-f$. We notice that $-f=Y^*$, thus while $Y\geq eX-f$ we are in particular in the region $\{Y>Y^*\}$ and the flow on the plane \eqref{plane1} continues to be negative, hence the inequality $Z\leq-cY+d$ is preserved along the trajectory. Finally, by Lemma \ref{lem.inner} and its proof it follows that the flow on the first part of the parabolic cylinder points from inside towards outside the cylinder, thus our orbit cannot cross it. This together with the monotonicity of $Z$ along the orbit preserve the inequality
$$
-Y^2-\frac{2(m-1)}{\sigma+2}Y\leq Z
$$
while we are in the region $D_2$. Unless entering one of the critical points $P_0^{\lambda}$ (which is our aim to prove), the orbit going out of $P_2$ can leave the region $D_2$ only when $Y=eX-f>-f=Y^*$. Let us finally consider the region
$$
D_3=\left\{0\leq X\leq X^*, g^{-1}(X)\leq Y\leq eX-f, -Y^2-\frac{2(m-1)}{\sigma+2}Y\leq Z\leq-aX+b\right\},
$$
where the function $g$ is defined in \eqref{interm14}. Since the line $Y=eX-f$ is the intersection between the planes \eqref{plane1} and \eqref{plane2}, if the orbit coming from $P_2$ leaves the region $D_2$ does so by entering the region $D_3$ which implies that $Z\leq-aX+b$. But we easily notice from Step 2 that for $\sigma$ sufficiently close to 2 the direction of the flow of the system on the plane $Z=-aX+b$ is pointing towards the interior of $D_3$ (as the line $r_2$ is more to the right in terms of $Y$ than the line $r_1$ where the direction of the flow changes on this plane) and thus the orbit cannot leave the region $D_3$ through its "wall" given by the plane \eqref{plane2}. Moreover, it cannot also leave the region $D_3$ by entering the interior of the parabolic cylinder \eqref{cyl} since the flow on it points towards outside as shown in the proof of Lemma \ref{lem.inner}. Finally, the orbit cannot leave the union $D_2\cup D_3$ through the planes $\{X=0\}$ or $\{Z=0\}$ as they are invariant for the system \eqref{PSsyst1}. We thus conclude that the orbit coming out of $P_2$, for $\sigma$ sufficiently close to 2, will remain forever in the set $D_2\cup D_3$. As we know that along this orbit component $Z$ is increasing and component $X$ is decreasing, it cannot form limit cycles and has to enter a critical point, thus it enters some of the points $P_0^{\lambda}$, as stated. Since the constructions above are highly geometric, we plot for the reader's convenience the regions $D_1$, $D_2$ and $D_3$ limiting the orbits going out of $P_2$ in Figure \ref{fig2}.

\begin{figure}[ht!]
  \begin{center}
  \includegraphics[width=11cm,height=8cm]{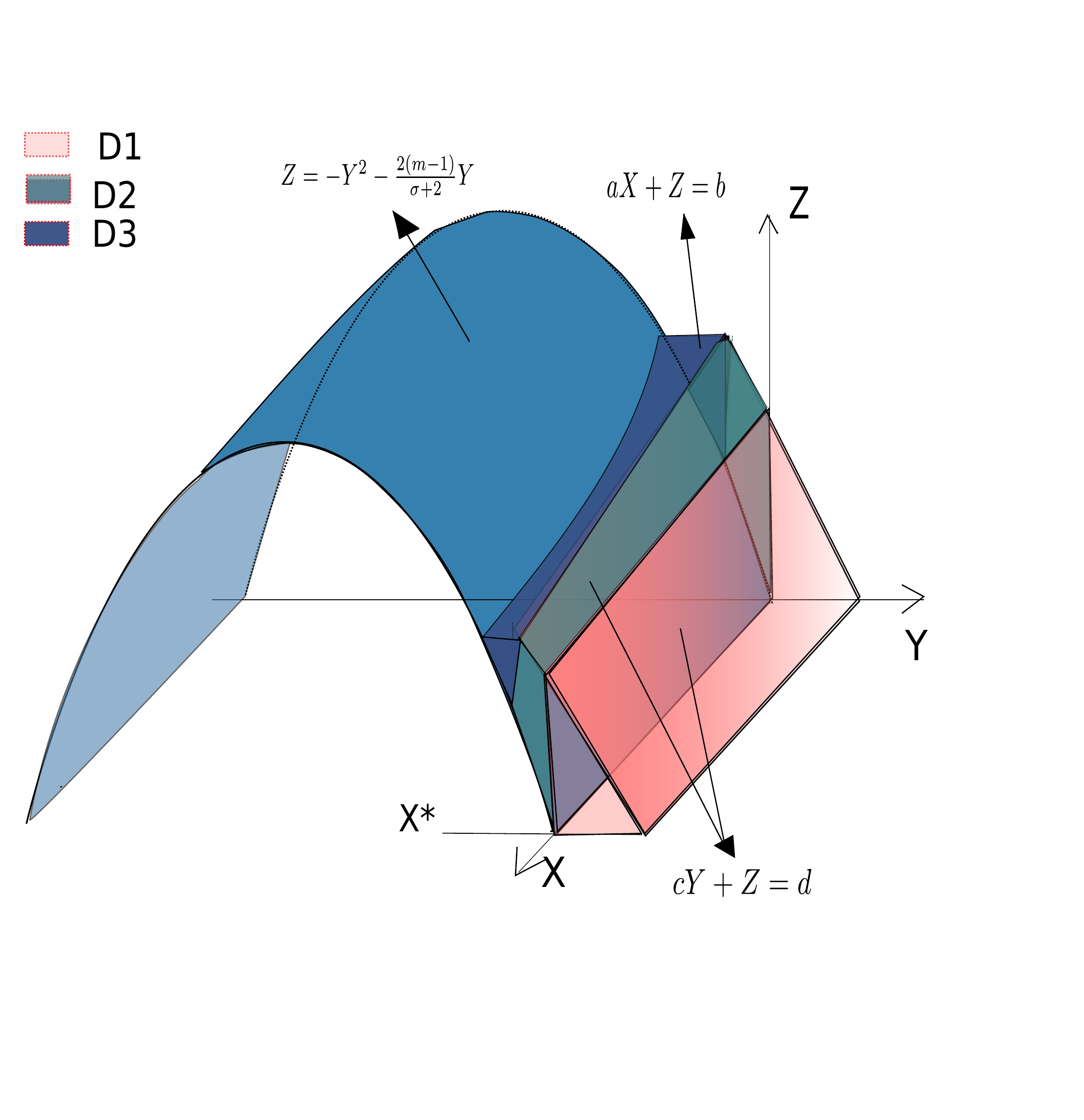}
  \end{center}
  \caption{A plot of the regions $D_1$, $D_2$ and $D_3$ in the phase space}\label{fig2}
\end{figure}

\medskip

\noindent \textbf{Step 4.} We show that for the same $\sigma\in(2,\sigma_0)$ for which all the above considerations hold true, all the trajectories from the critical point $P_0$ also enter some point $P_0^{\lambda}$. To this end, we repeat an argument already used for example in \cite[Proposition 4.1]{IS19a} and introduce the region
$$
D_4=\{0\leq X\leq X(P_2), 0\leq Y\leq Y(P_2)\},
$$
with $X(P_2)$, $Y(P_2)$ given in \eqref{not.P2}. The direction of the flow of the system \eqref{PSsyst1} on the plane $\{X=X(P_2)\}$ is given by the sign of the expression
$$
X(P_2)[(m-1)Y-2X(P_2)]\leq0, \qquad {\rm if} \ Y\leq Y(P_2),
$$
while the direction of the flow of the system \eqref{PSsyst1} on the plane $\{Y=Y(P_2)\}$ is given by the sign of the expression
\begin{equation*}
\begin{split}
E(X,Z)&=-Y(P_2)^2-\frac{2(m-1)}{\sigma+2}Y(P_2)+X(1-Y(P_2))-Z\\
&<X\left(1-\frac{1}{(m+1)\alpha}\right)-\frac{(m+1)\beta+1}{(m+1)^2\alpha^2}\\
&=\frac{1}{(m+1)^2\alpha^2}\left[\alpha(m+1)(\alpha(m+1)-1)X-\beta(m+1)-1\right],
\end{split}
\end{equation*}
which has negative sign, provided $X<(1+\beta(m+1))/[\alpha(m+1)(\alpha(m+1)-1)]=X(P_2)$. It thus follows that a connection going out in the region $D_4$ cannot leave the region $D_4$ unless by crossing the plane $\{Y=0\}$. Since all the orbits going out of $P_0=(0,0,0)$ do that in the region $D_4$, they will stay on $D_4$ and by taking $\sigma\in(2,\sigma_0)$ such that $X(P_2)<X^*$ and $Y(P_2)<1/2$, we conclude that the orbits going out of $P_0$ enter and remain in the region $D_1$ in Step 3 of the current proof until they cross the plane $\{Y=0\}$. Thus the above considerations apply to all these orbits too, proving that they have to enter one of the critical points $P_0^{\lambda}$.
\end{proof}
We plot in Figure \ref{fig3} the results of numerical experiments showing how the orbit going out of $P_2$ connects to one of the critical points $P_0^{\lambda}$ (including the vertex of the critical parabola) for values of $\sigma$ sufficiently close to 2, and also drawing what other orbits of the system in its neighborhood do. We can also see in the figure some connections coming from the points $P_0$ (appearing "below" the one from $P_2$) and $Q_1$ ("above" the one from $P_2$).

\begin{figure}[ht!]
  \begin{center}
  \subfigure[$\sigma$ close to 2]{\includegraphics[width=7.5cm,height=6cm]{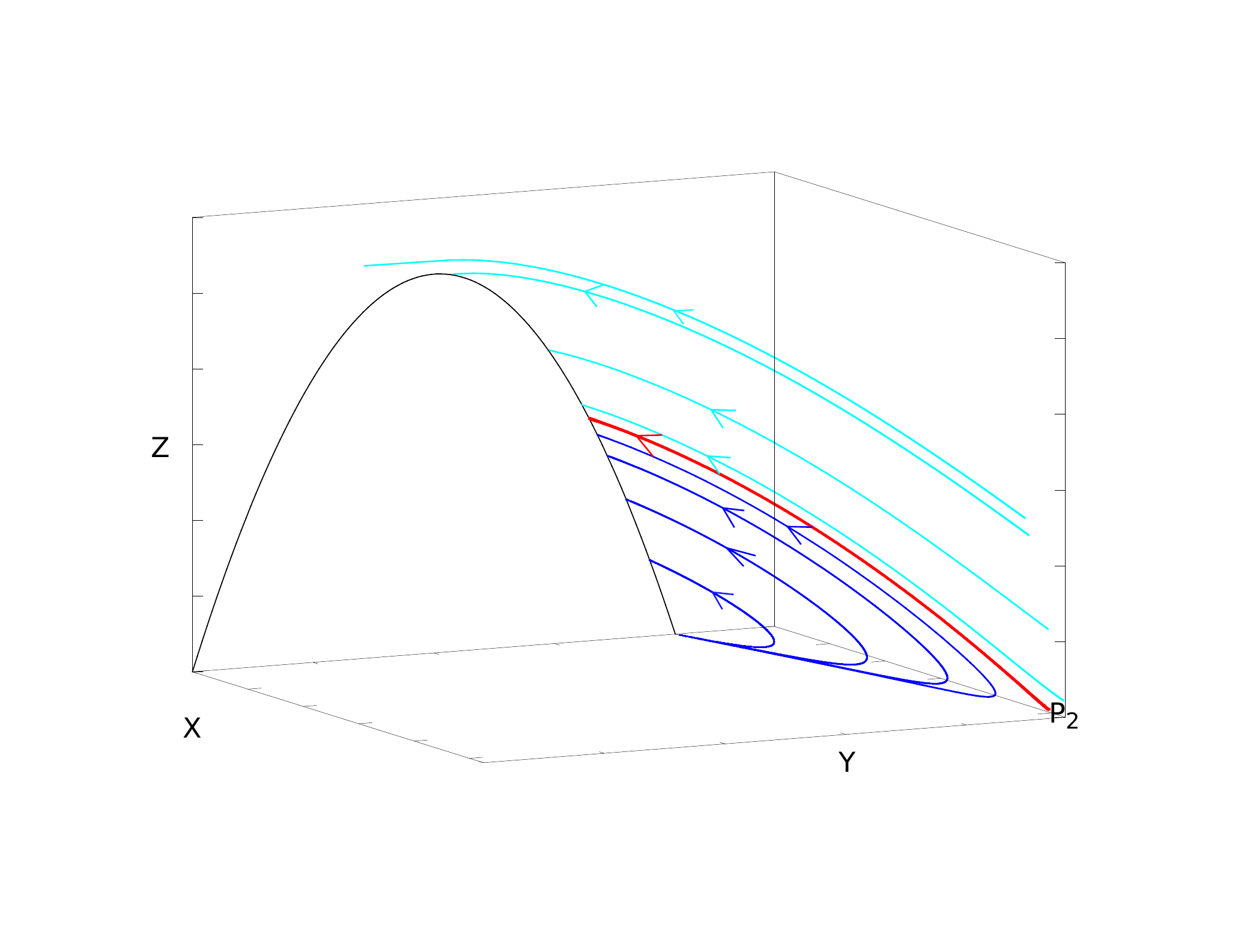}}
  \subfigure[Connection to the vertex]{\includegraphics[width=7.5cm,height=6cm]{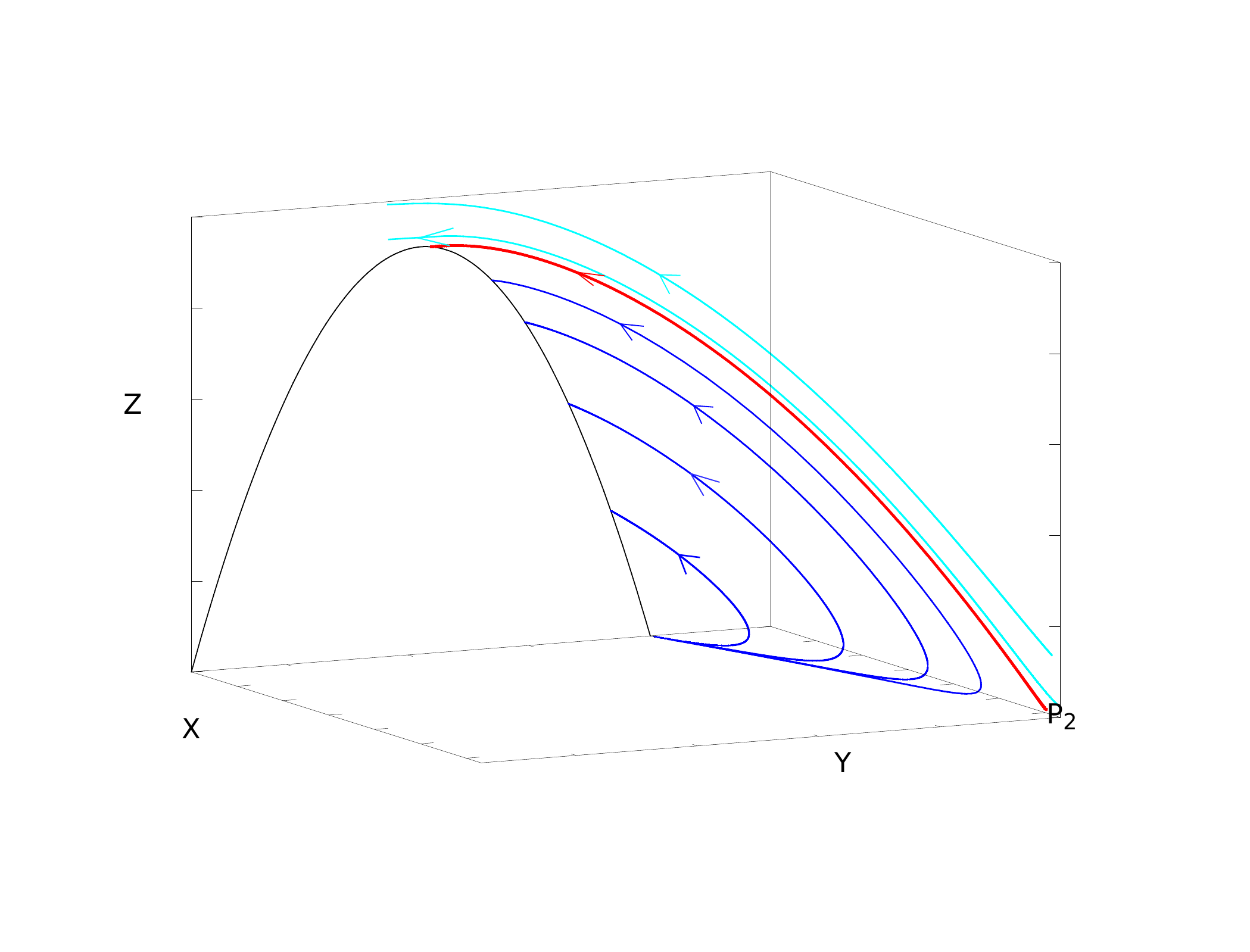}}
  \end{center}
  \caption{Orbits from $P_2$, $P_0$ and other points for different values of $\sigma$ close to 2. Numerical experiment for $m=1.5$, $p=0.5$ and $\sigma=3$, respectively $\sigma=3.285$}\label{fig3}
\end{figure}

On the contrary, when $\sigma>2$ is large, the orbit going out of $P_2$ does not enter any of the points $P_0^{\lambda}$.
\begin{proposition}\label{prop.large}
There exists $\sigma_1>2$ such that for any $\sigma\in(\sigma_1,\infty)$ the connection going out of $P_2$ enters the critical point $Q_3$ at infinity.
\end{proposition}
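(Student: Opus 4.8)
The plan is to exploit that, since $\beta/\alpha=2(m-1)/(\sigma+2)$ by \eqref{SSexp}, the whole critical parabola \eqref{parabola} is confined to $\{X=0,\ -2(m-1)/(\sigma+2)\le Y\le0,\ 0\le Z\le(m-1)^2/(\sigma+2)^2\}$, so that its $Z$-coordinate never exceeds $\beta^2/4\alpha^2=(m-1)^2/(\sigma+2)^2$, a quantity that tends to $0$ as $\sigma\to\infty$. Thus I would show that, for $\sigma$ large, the unique orbit $\gamma$ leaving $P_2$ into $\{Z>0\}$ (Lemma \ref{lem.P2}) attains a value of $Z$ bounded below by a \emph{fixed} positive constant, independent of $\sigma$; since $Z$ is non-decreasing along the trajectories of \eqref{PSsyst1}, such a $\gamma$ can never converge to a point of the parabola, and a soft argument then forces it into $Q_3$.

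First I would establish a dichotomy valid for every $\sigma>2$: \emph{either} $\gamma$ crosses the plane $\{Y=-\beta/2\alpha\}$ — and then it enters $Q_3$ by the Remark (a) after Proposition \ref{prop.second} — \emph{or} $Z\le\beta^2/4\alpha^2$ along all of $\gamma$. Indeed, near $P_2$ one has $Y\approx Y(P_2)>0$; by Lemma \ref{lem.monot} $X$ and $Y$ decrease and $Z$ increases while $Y\ge0$, and since $P_2$ is the only finite critical point with $Y>0$, since $Z$ is strictly increasing (so $\gamma$ cannot return to $P_2$ nor converge to $P_0$), since there are no limit cycles, and since $\gamma$ cannot escape to infinity inside $\{Y>0\}$ (that would force $Z\to\infty$ there, hence $\dot Y\to-\infty$), the orbit must reach $\{Y=0\}$ and pass into $\{Y<0\}$. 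There $\dot X<0$, $\dot Z>0$; if $\gamma$ never reaches $\{Y=-\beta/2\alpha\}$ then $X,Y,Z$ stay bounded with $X,Z$ monotone, the $\omega$-limit set of $\gamma$ lies in a line $\{X=X^*,\ Z=Z^*\}$, invariance forces $X^*=0$, and on $\{X=0,\ Z=Z^*\}$ the reduced flow is $\dot Y=-(Y+\beta/2\alpha)^2-(Z^*-\beta^2/4\alpha^2)$. If $Z^*>\beta^2/4\alpha^2$ this is strictly negative, leaving no critical point on that line and contradicting compactness of the $\omega$-limit set; hence $Z^*\le\beta^2/4\alpha^2$, and $Z\le\beta^2/4\alpha^2$ all along $\gamma$.

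It then remains to prove that, for $\sigma$ large, $Z$ \emph{does} exceed $\beta^2/4\alpha^2=(m-1)^2/(\sigma+2)^2$ somewhere on $\gamma$; combined with the dichotomy this finishes the proof. Here the local analysis at $P_2$ is used: $\gamma$ leaves $P_2$ tangent to the eigenvector $e_3$ of \eqref{interm.bis} associated with the positive eigenvalue $\lambda_3=(\sigma-2)(m-1)/(2(m+1)\alpha)=(\sigma-2)X(P_2)>0$, and from \eqref{interm.bis} one checks $e_3\to(0,0,1)$ as $\sigma\to\infty$, so $\gamma$ leaves $P_2$ essentially in the direction of the $Z$-axis, $X$ and $Y$ staying near $X(P_2)$ and $Y(P_2)$ while $Z$ grows. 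The one subtlety is that $\lambda_3$ itself diverges like $\sigma$; but in deviation coordinates centred at $P_2$ the only quadratic coefficient carrying a factor $\sigma$ is that of $XZ$ in $\dot Z$, and it is exactly matched by the linear coefficient $\lambda_3=(\sigma-2)X(P_2)$, so the relevant ratios are all $O(1)$ and the local phase portrait near $P_2$ is accurate on a neighbourhood whose size does not shrink as $\sigma\to\infty$. Hence there is $\delta_0>0$, independent of $\sigma$, such that $\gamma$ attains $Z=\delta_0$ for all sufficiently large $\sigma$; since then $(m-1)^2/(\sigma+2)^2<\delta_0$, we get $Z>\beta^2/4\alpha^2$ along $\gamma$, and by the dichotomy $\gamma$ enters $Q_3$. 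One takes $\sigma_1$ to be the threshold beyond which both the uniform local estimate at $P_2$ and the inequality $(m-1)^2/(\sigma+2)^2<\delta_0$ hold.

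I expect the main obstacle to be precisely this uniform-in-$\sigma$ control near $P_2$: although $P_2$ is hyperbolic, it is ``stiff'', its unstable eigenvalue blowing up with $\sigma$, and one has to quantify how long the unstable manifold of $P_2$ stays close to the line $P_2+\real e_3$ before $Z$ has grown to order one, using exactly the cancellation of $\sigma$-factors between $\lambda_3$ and the quadratic term of $\dot Z$. Everything else — the monotonicity of $Z$, the absence of limit cycles, the localization of the critical parabola, and the appeal to the Remark after Proposition \ref{prop.second} — is soft.
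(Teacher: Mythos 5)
Your overall strategy coincides with the paper's: both arguments reduce the statement to showing that, for $\sigma$ large, the $Z$-coordinate along the orbit out of $P_2$ exceeds $\beta^2/4\alpha^2=(m-1)^2/(\sigma+2)^2$, the maximal value of $Z$ on the critical parabola, and then use the monotonicity of $Z$ to exclude all the points $P_0^{\lambda}$ (your dichotomy via the $\omega$-limit set on the line $\{X=0,\,Z=Z^*\}$ is a correct, somewhat more explicit version of what the paper leaves implicit). The gap is in the step that actually produces the lower bound on $Z$. You assert that the unstable manifold of $P_2$ tracks the direction $e_3\to(0,0,1)$ until $Z$ reaches a fixed $\delta_0>0$ \emph{uniformly in} $\sigma$, justified only by the observation that the $\sigma$-factors in $\lambda_3=(\sigma-2)X(P_2)$ and in the quadratic term $(\sigma-2)xz$ of $\dot Z$ cancel. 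That cancellation is the right intuition for the $\dot Z$ equation alone, but it does not by itself control $x$ and $y$ over a $\sigma$-independent range of $z$: the local theorems you would cite give a neighbourhood whose size degrades with the $C^2$-norm of the vector field, which here grows like $\sigma$, so ``the local phase portrait is accurate on a neighbourhood whose size does not shrink'' is precisely the claim that needs a proof, and you supply none. As written, the argument is a programme with its central estimate missing, as you yourself acknowledge.

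Two remarks on closing it. First, you are over-shooting: since the target $(m-1)^2/(\sigma+2)^2$ tends to $0$, a fixed $\delta_0$ is not needed, and in fact almost nothing is, because while the orbit stays in $\{Y\geq0\}$ one has $\frac{d\ln Z}{d\eta}=(\sigma-2)X\geq\frac{\sigma-2}{2}(2X-(m-1)Y)=-\frac{\sigma-2}{2}\frac{d\ln X}{d\eta}$, so $Z$ is multiplied by $(X_0/X)^{(\sigma-2)/2}$ as $X$ decays from $X_0$ to $X$; since the orbit must satisfy $X\leq Z\leq(m-1)^2/(\sigma+2)^2$ when it crosses $\{Y=0\}$ if it is to have any chance of entering a $P_0^{\lambda}$, this amplification turns even a super-exponentially small anchor value of $Z$ (available from the crudest local estimate at $P_2$) into a contradiction for large $\sigma$. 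Second, the paper avoids the uniformity issue altogether with a global barrier: the plane $AX+BY+Z=C$ of \eqref{plane3} passes through $P_2$, the flow crosses it upward in the relevant region, and a one-line scalar product with $e_3$ shows the orbit starts on the correct side; exiting the region \eqref{region} through $\{Y=0\}$ or $\{X=X_*\}$ then forces $Z\geq BY(P_2)$, which tends to $(m-1)^2/(m+1)^2>0$ and so beats $(m-1)^2/(\sigma+2)^2$ for $\sigma$ large. That explicit barrier is exactly the device your sketch is missing.
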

\begin{proof}
The idea of this proof is once more to drive the orbit from $P_2$ by a system of barriers but this time by constructing bounds from below for the component $Z$ in order to force it to increase more than the maximum value of the $Z$ coordinate on the critical parabola which is $Z=\beta^2/4\alpha^2$. We again divide the proof into several steps.

\medskip

\noindent \textbf{Step 1.} Let us consider the plane
\begin{equation}\label{plane3}
AX+BY+Z=C, \ A=\frac{(\sigma-1)(2m+\sigma)}{(\sigma+2)(m+1)}, \ B=\frac{(m-1)(2m+\sigma)}{(\sigma+2)(m+1)}, \ C=AX(P_2)+BY(P_2)
\end{equation}
where $X(P_2)$, $Y(P_2)$ are given in \eqref{not.P2}. Notice that $P_2$ belongs to this plane. The direction of the flow of the system \eqref{PSsyst1} on this plane is given by the sign of the scalar product of its normal direction $(A,B,1)$ with the vector field of the system, which after rather tedious calculations leads to
$$
G(X,Y)=B(Y(P_2)-Y)Y+A\sigma(X(P_2)-X)\left(X-X_{*}\right), \ \ X_{*}=\frac{(m-1)(\sigma+1)(2m+\sigma)}{\sigma(\sigma-1)(\sigma+2)(m+1)}
$$
which is positive provided that $Y<Y(P_2)$ and $X_*<X<X(P_2)$. We prove now that for $\sigma>2$ sufficiently large, the orbit coming out of $P_2$ goes above the plane \eqref{plane3}. To this end, we calculate the scalar product between the normal to the plane \eqref{plane3}, that is $\overline{n}=(A,B,1)$, with the eigenvector $e_3$ indicating the direction of the connection when going out of $P_2$. Recalling that $e_3$ is given in \eqref{interm.bis}, an easy calculation gives
$$
\overline{n}\cdot e_3=1-\frac{2(m+1)(m-1)(2m+\sigma)(\alpha\sigma-\alpha+\sigma)}{(\sigma+2)(m+1)[(m-1)\sigma^2+(5-m)\sigma+4m]}>0
$$
provided $\sigma$ is sufficiently large, since the limit of the above expression as $\sigma\to\infty$ is 1.

\medskip

\noindent \textbf{Step 2.} Let us now consider the region of the space
\begin{equation}\label{region}
R=\{X_{*}\leq X\leq X(P_2), 0\leq Y\leq Y(P_2), Z\geq C-AX-BY\}
\end{equation}
inside which the orbit from $P_2$ begins, according to Lemma \ref{lem.monot} and the last calculation in Step 1 for $\sigma$ sufficiently large. There is no critical point inside $R$ to which the connection might enter, neither in the plane nor at infinity. Since all the three components are monotonic along the orbit, it has to go out of the region $R$ and connect to a critical point. Since the flow on the plane \eqref{plane3} does not allow the orbit to cross it, the inequality $Z\geq C-AX-BY$ will hold true forever. We infer that the orbit, in order to quit the region $R$, has to do it either by crossing the plane $\{Y=0\}$ or by crossing the plane $\{X=X_{*}\}$. In the former, at the point where the orbit intersects the plane $\{Y=0\}$ we have
$$
Z\geq C-AX>C-AX(P_2)=BY(P_2)=\frac{(m-1)^2(2m+\sigma)(\sigma-2)}{(\sigma+2)^2(m+1)^2}>\frac{(m-1)^2}{(\sigma+2)^2}
$$
for $\sigma>2$ sufficiently large. Since the right hand side of the last inequality is the $Z$ component of the critical point $P_0^{\lambda}$ for $\lambda=-\beta/2\alpha$, which is the maximum value of $Z$ achieved by the points $P_0^{\lambda}$ and the $Z$ coordinate is strictly increasing along the orbits of the system \eqref{PSsyst1} we infer that the orbit going out of $P_2$ cannot enter any of the points $P_0^{\lambda}$. In the latter case, at the point where the orbit intersects the plane $\{X=X_{*}\}$ we have
$$
Z>C-AX_{*}-BY>C-AX_{*}-BY(P_2)=A(X(P_2)-X_*)\to\infty, \qquad {\rm as} \ \sigma\to\infty
$$
and a similar argument shows that the orbit cannot enter any of the points $P_0^{\lambda}$ provided $\sigma$ is large enough, ending the proof.
\end{proof}
We draw in Figure \ref{fig4} the outcome of Proposition \ref{prop.large} with the orbit going out of $P_2$ passing above the critical parabola for $\sigma$ sufficiently large. Some orbit coming from $Q_1$ and following the evolution of the orbit going out of $P_2$ are also shown in Figure \ref{fig4}.

\begin{figure}[ht!]
  \begin{center}
  \includegraphics[width=11cm,height=8cm]{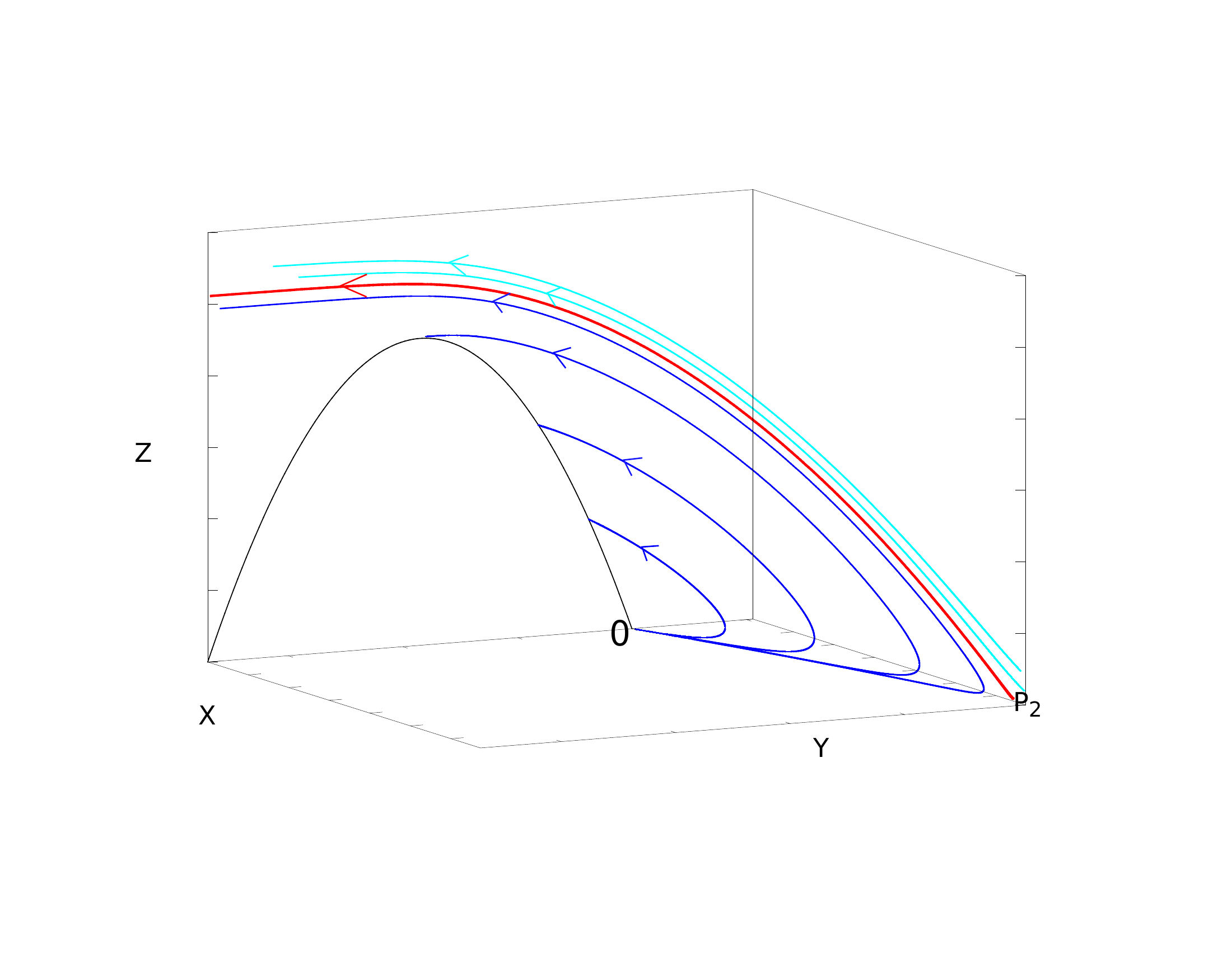}
  \end{center}
  \caption{The orbit from $P_2$ "avoiding" the critical parabola for $\sigma$ sufficiently large. Numerical experiment for $m=1.5$, $p=0.5$ and $\sigma=3.4$}\label{fig4}
\end{figure}

According to Propositions \ref{prop.small} and \ref{prop.large} and to the facts that the set $\mathcal{S}=\{P_0^{\lambda}: -\beta/2\alpha<\lambda<0\}$ is an asymptotically stable set by Proposition \ref{prop.att} and that $Q_3$ is an attractor, we infer that the sets of $\sigma\in(2,\infty)$ such that the orbit coming from $P_2$ enters $\mathcal{S}$, respectively $Q_3$ are both open and non-empty. Thus by the three-set argument we get that there exist (one or various) $\sigma\in(2,\infty)$ such that the unique orbit going out of $P_2$ enters the maximum point of the parabola $P_0^{-\beta/2\alpha}$ for such values of $\sigma$. Let $\sigma_*$ be the smallest of these values, thus the orbit going out of $P_2$ enters some point $P_0^{\lambda}$ in $\mathcal{S}$ for any $\sigma\in(2,\sigma_*)$. Let us denote by $\lambda(\sigma)$ the value of $\lambda$ for which the orbit coming from $P_2$ enters $P_0^{\lambda}$, for $\sigma\in(2,\sigma_*)$. We complete the fine global analysis of the orbits coming out of $P_2$ by the next result.
\begin{lemma}\label{lem.limit}
With the notation above we have
$$
\liminf\limits_{\sigma\to2}\lambda(\sigma)=0.
$$
\end{lemma}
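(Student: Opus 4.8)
The plan is to argue directly in the phase space, pinning down the orbit going out of $P_2$ for $\sigma$ close to $2$ and showing it must enter the critical parabola \eqref{parabola} arbitrarily near its endpoint $P_0$. The starting point is the observation, from \eqref{not.P2}, that $X(P_2)$ and $Y(P_2)$ are both of order $\sigma-2$ while $Y(P_2)/X(P_2)=2/(m-1)$ for \emph{every} $\sigma>2$; hence $P_2\to P_0=(0,0,0)$ as $\sigma\to2$, along the fixed ray of direction $(1,2/(m-1),0)$. A short computation with the linearisation of \eqref{PSsyst1} at $P_2$ then shows that the unstable eigenvector $e_3$ has $X$- and $Y$-components of size $\sim\alpha\sim c/(\sigma-2)$, in the ratio $1:\sigma/(m-1)$, while its $Z$-component stays $O(1)$; after normalisation $e_3$ therefore tends, as $\sigma\to2$, exactly to the unit vector in the direction $-(1,2/(m-1),0)$ — i.e. to the direction pointing from $P_2$ back to $P_0$, which is geometrically natural since the two points coalesce in the limit — and its $Z$-component becomes $O(\sigma-2)$.

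I would then follow the orbit out of $P_2$ by the Hartman--Grobman theorem at $P_2$. Leaving $P_2$ tangent to $e_3$, the orbit stays $o(1)$-close to the straight linearised trajectory as long as it remains in a fixed small ball around $P_2$, and this is legitimate for $\sigma$ close to $2$ because the whole relevant arc has Euclidean length $O(\sigma-2)\to0$. Since the escape direction tends to $-(1,2/(m-1),0)$ and the relation $Y(P_2)/X(P_2)=2/(m-1)$ forces $X$ and $Y$ to reach $0$ \emph{simultaneously} along the limiting linear flow, after travelling a distance $O(\sigma-2)$ both $X$ and $Y$ have dropped to $O((\sigma-2)^2)$, while the total increment of the $Z$-coordinate is of order (arc length)$\times$($Z$-slope of $e_3$)$=O(\sigma-2)\cdot O(\sigma-2)=O((\sigma-2)^2)$. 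Thus for $\sigma$ near $2$ the orbit reaches a point $(O((\sigma-2)^2),O((\sigma-2)^2),\delta(\sigma))$ with $\delta(\sigma)=O((\sigma-2)^2)$, i.e. it enters a tiny neighbourhood of $P_0$ hugging the $Z$-axis.

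It then remains to close the orbit near $P_0$. On the invariant plane $\{X=0\}$ the system \eqref{PSsyst1} reduces to $\dot Y=-Y^2-(\beta/\alpha)Y-Z$, $\dot Z=0$, so from a point $(0,0,\delta)$ the flow decreases $Y$ monotonically to the larger root $Y_+=-(\alpha/\beta)\delta+O(\delta^2)$ of $Y^2+(\beta/\alpha)Y+\delta=0$; since $\beta/\alpha\to(m-1)/2$, this is a critical point $P_0^{\lambda}$ with $\lambda=Y_+=O((\sigma-2)^2)$. Our orbit lies $O((\sigma-2)^2)$-close to $\{X=0\}$ at this stage and, since $X$ is decreasing all along the orbit going out of $P_2$ (Lemma~\ref{lem.monot}, together with $\dot X<0$ in $\{Y<0\}$ from \eqref{PSsyst1}), its $X$-component decreases to $0$; by continuous dependence its $(Y,Z)$-components shadow the plane flow and it converges to a critical point $P_0^{\lambda(\sigma)}$ on the parabola with $\lambda(\sigma)=O((\sigma-2)^2)$. (One could instead try to invoke the asymptotic stability of $\mathcal{S}$ from Proposition~\ref{prop.att}, but here the target $P_0^{\lambda(\sigma)}$ drifts towards the excluded endpoint $P_0$, so the uniform version of that result does not apply verbatim and the one-dimensional argument above is cleaner.) Hence $\lambda(\sigma)\to0$ as $\sigma\to2^+$, and since $\lambda(\sigma)<0$ for all such $\sigma$, in particular $\liminf_{\sigma\to2}\lambda(\sigma)=0$.

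The step I expect to be delicate is the Hartman--Grobman tracking of the second paragraph: one must check that the entire relevant portion of the orbit remains inside a single neighbourhood of $P_2$ on which the linearisation is a faithful model — this is precisely what makes the accumulated $Z$-increment $O((\sigma-2)^2)$ rather than $O(1)$ — and then control the transition into the neighbourhood of $P_0$, bearing in mind that $P_0^{\lambda(\sigma)}$ slides towards the endpoint of the critical parabola as $\sigma\to2$, where the stability afforded by Proposition~\ref{prop.att} degenerates.
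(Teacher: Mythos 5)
Your strategy (track the orbit out of $P_2$ quantitatively and show it crosses $\{Y=0\}$ at a point with small $Z$, then conclude it lands near the endpoint of the parabola) is in the right spirit, but the execution has two genuine gaps, the first of which is fatal as written. From \eqref{interm.bis}, only the \emph{first} component of $e_3$ contains $\alpha$: the $Y$-component is $-2(m+1)\sigma/D$ with $D=(m-1)\sigma^2+(5-m)\sigma+4m\to 6(m+1)$, so it tends to $-2/3$ while the $X$-component blows up like $-4/(3(\sigma-2))$. Hence the normalized $e_3$ tends to $(-1,0,0)$, not to $-(1,2/(m-1),0)$; the ratio of the $Y$- to the $X$-component is $\sigma/((m-1)\alpha)\to0$, not $\sigma/(m-1)$. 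The linchpin of your estimate --- that $X$ and $Y$ reach $0$ simultaneously along the linearized escape direction, which is what produces the $O((\sigma-2)^2)$ crossing point --- therefore fails: along the true tangent direction $X$ would be extrapolated to $0$ while $Y$ is still of order $Y(P_2)=O(\sigma-2)$, so the orbit must bend substantially and the linear model gives no information about where it meets $\{Y=0\}$. Independently of this algebra, the Hartman--Grobman step cannot be made quantitative in the way you need: the theorem yields only a topological conjugacy on some neighborhood, with no $o(1)$ closeness of orbits to linear trajectories, and the neighborhood of $P_2$ on which the linearization is faithful cannot reach down to $\{X=Y=0\}$ because $P_2$ and $P_0$ coalesce as $\sigma\to2$ and the vector field of \eqref{PSsyst1} is quadratically small along the whole segment joining them, so the nonlinear terms are of the same size as the linear ones exactly where you need to track the orbit.

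The paper avoids all of this by arguing by contradiction: assuming $\liminf\lambda(\sigma)=\lambda_0<0$, it uses the explicit plane $\{Y+kZ=1\}$ with $k=2(m+1)\alpha/((m-1)(\sigma-1))$ as a barrier (valid wherever $X<X(P_2)$, by Lemma \ref{lem.monot}) to show the orbit from $P_2$ crosses $\{Y=0\}$ with $Z<1/k=O(\sigma-2)$, and then traps it in an invariant region $D_0$ whose ``roof'' is the stable manifold of $P_0^{\lambda_0/2}$, so the orbit cannot reach any $P_0^{\lambda}$ with $\lambda\le\lambda_0$. Note that such a trapping mechanism is also missing from your final paragraph: after the orbit crosses $\{Y=0\}$, the component $Z$ is still monotone increasing, and ``shadowing the plane flow'' by continuous dependence on an infinite time interval does not by itself bound the total subsequent growth of $Z$; some barrier (a stable manifold or an explicit surface) is needed to guarantee the limit point has small $|\lambda|$. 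If you want to salvage a direct argument, replace the linear tracking by an explicit barrier of the type $\{Y+kZ=1\}$ and replace the shadowing step by an invariant region in $\{Y\le0\}$.
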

\begin{proof}
Assume for contradiction that the conclusion is not true, then
$$
\liminf\limits_{\sigma\to2}\lambda(\sigma)=\lambda_0<0,
$$
which in particular means that the orbit coming out of $P_2$ does not connect to $P_0^{\lambda_0/2}$ for $\sigma$ at least in a right-neighborhood $\sigma\in(2,\sigma_0)$. The main idea of the proof is to show that on the one hand, the stable manifold of the point $P_0^{\lambda_0/2}$ is the "roof" to an invariant region of the phase space lying in the half-space $\{Y\leq0\}$, and on the other hand for $\sigma$ close to 2 the orbit going out of $P_2$ can cross the plane $\{Y=0\}$ as low (in terms of $Z$) as we wish, thus entering the invariant region and having to remain there. We divide the rest of the proof into several steps.

\medskip

\noindent \textbf{Step 1.} Consider the stable manifold of the point $P_0^{\lambda_0/2}$ and let $\gamma$ be the curve generated on the plane $\{Y=0\}$ by its intersection with this plane. On the one hand by Step 4 in the proof of Proposition \ref{prop.first} we infer that the curve $\gamma$ has an endpoint on the plane $\{X=0\}$, more precisely at the point $(0,0,Z)$ with $Z$ the same coordinate as for the point $P_0^{\lambda_0/2}$. On the other hand, since the plane $\{Y=0\}$ can be crossed by any point with $Z>X$, we easily infer that the other endpoint of the curve $\gamma$ lies on the line $\{X=Z\}$ inside the plane $\{Y=0\}$. We also notice that for $\sigma=2$ all the planes $Z={\rm constant}$ are invariant for the system \eqref{PSsyst1}, whence the curve $\gamma$ approaches the plane with $Z$ constant (equal to the $Z$ component of the point $P_0^{\lambda_0/2}$) as $\sigma\to2$. Let us now consider the region $D_0$ of the phase space to be the three-dimensional solid limited by the planes $\{Y=0\}$ and $\{X=0\}$, the surface $\mathcal{T}$ of equation $\{-Y^2-(\beta/\alpha)Y+X-XY-Z=0\}$ (that is, $\dot{Y}=0$) and the two-dimensional manifold of $P_0^{\lambda_0/2}$. We show that, once an orbit enters the region $D_0$, it cannot go out of it. To this end, we already know that an orbit cannot cross the stable manifold of $P_0^{\lambda_0/2}$ and the plane $\{X=0\}$ due to their invariance and also, once passed into the region $\{Y<0\}$ coming from the positive side, cannot go back to the region $\{Y>0\}$ due to the monotonicity of the components $X$ and $Z$ along the orbits in the region $\{Y\leq0\}$. It remains to study the flow of the system \eqref{PSsyst1} through the surface $\mathcal{T}$. The normal direction to this surface is given by the vector $(1-Y,-2Y-\beta/\alpha-X,-1)$, thus the direction of the flow is given by the sign of the scalar product between this vector and the vector field giving the system \eqref{PSsyst1}, namely
$$
(1-Y)X[(m-1)Y-2X]-(\sigma-2)XZ=X[(1-Y)(m-1)Y-2(1-Y)X-(\sigma-2)Z]<0,
$$
since all the terms in brackets are negative in the half-space $\{Y<0\}$. It follows that the flow on the surface $\mathcal{T}$ points towards the interior of the region $D_0$, showing that this region cannot be left by any of its "walls" by an orbit which entered it previously.

\medskip

\noindent \textbf{Step 2.} We prove that for $\sigma$ sufficiently close to 2, the orbit coming out of $P_2$ must enter the region $D_0$. To this end, let us consider the plane $\{Y+kZ=1\}$ for a $k>0$ to be determined later. It is obvious that the orbit going out of $P_2$ starts in the region $\{Y+kZ<1\}$ since $Y(P_2)<1$. The direction of the flow of the system \eqref{PSsyst1} on the plane $\{Y+kZ=1\}$ and in the region $\{X<X(P_2),Y>0\}$ is given by the sign of the expression
\begin{equation*}
\begin{split}
F(X,Y,Z)&=-Y^2-\frac{\beta}{\alpha}Y+X(1-Y)-Z+k(\sigma-2)Z\\
&=-Y^2-\frac{\beta}{\alpha}Y+k(\sigma-1)XZ-Z<-Y^2-\frac{\beta}{\alpha}Y+[k(\sigma-1)X(P_2)-1]Z,
\end{split}
\end{equation*}
which is negative provided, for example,
$$
k=\frac{2(m+1)\alpha}{(m-1)(\sigma-1)}.
$$
Since by Lemma \ref{lem.monot} the orbit going out of $P_2$ has $X<X(P_2)$ at any point on it, it follows that this orbit must remain in the region $\{Y+kZ<1\}$ at least until intersecting the plane $\{Y=0\}$. In particular, this orbit intersects the plane $\{Y=0\}$ at a point of coordinates
$$
X<Z<\frac{1}{k}=\frac{(m-1)(\sigma-1)}{2(m+1)\alpha}=\frac{(m-1)^2(\sigma-1)(\sigma-2)}{2(m+1)(\sigma+2)},
$$
which can be done as small as we want when $\sigma$ approaches 2. In particular, for $\sigma$ sufficiently small this orbit crosses the plane $\{Y=0\}$ below the curve $\gamma$ in Step 1, which approaches a constant positive value of $Z$, thus entering the region $D_0$ and remaining there according to Step 1. This is a contradiction to the fact that this orbit has to enter a critical point $P_0^{\lambda}$ with $\lambda\leq\lambda_0$, since all such points lie outside the region $D_0$, ending the proof.
\end{proof}
We next address the question of the orbits going out of $P_0$ on its center manifold as shown in Lemma \ref{lem.P1}. We already proved that for $\sigma$ sufficiently close to 2, all these orbits enter some of the critical points $P_0^{\lambda}$. But this happens for some of these orbits for any given $\sigma>2$.
\begin{proposition}\label{prop.P0}
For any $\sigma>2$, there exist orbits going out of $P_0$ and entering one of the critical points $P_0^{\lambda}$ in the phase space associated to the system \eqref{PSsyst1}.
\end{proposition}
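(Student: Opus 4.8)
The plan is to produce such an orbit among the one‑parameter family of trajectories leaving $P_0$ on its center manifold, which by Lemma \ref{lem.P0} are tangent near $P_0$ to the curves $X=K\sqrt{Z}-(m-1)\alpha Z$ and carry profiles with the behaviour \eqref{beh.P0}; write $\gamma_K$ for the orbit with parameter $K>0$. On the center manifold one has $Y=(\alpha/\beta)(X-Z)+o(|(X,Z)|)$, so $\gamma_K$ leaves $P_0$ into the octant $\{X>0,\ Y>0,\ Z>0\}$, and in fact into the box $D_4=\{0\le X\le X(P_2),\ 0\le Y\le Y(P_2)\}$ of Step 4 in the proof of Proposition \ref{prop.small}. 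The flow computations there on the faces $\{X=X(P_2)\}$ and $\{Y=Y(P_2)\}$ use no restriction on $\sigma$, hence for every $\sigma>2$ the orbit $\gamma_K$ stays in $D_4$ until it crosses the plane $\{Y=0\}$, and it \emph{must} cross it (otherwise its $\omega$–limit would be a critical point of $D_4\cap\{Y\ge0\}$, forcing $Z=0$, impossible since $Z>0$ along $\gamma_K$). At the crossing point $\dot Y=X-Z<0$, so $X<Z$ there; since in $\{Y<0\}$ the component $X$ decreases and $Z$ increases, $X<Z$ is preserved and $\gamma_K$ can never return to $\{Y\ge0\}$. In $\{Y<0\}$ there are no limit cycles ($Z$ is strictly monotone while $X>0$), so either $\gamma_K$ converges to a finite critical point — necessarily one of the $P_0^{\lambda}$, because $P_0$ and $P_2$ have $Z=0$ and are excluded by $Z>0$ — or it escapes to the equator of the Poincaré hypersphere, which in $\{Y<0\}$ can happen only at $Q_3$ or $Q_4$.

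Thus it remains to exclude the escape, and this is where the smallness of $K$ enters. From $Y=(\alpha/\beta)(X-Z)+o(\cdot)$ together with $X=K\sqrt Z-(m-1)\alpha Z+o(\cdot)$ along $\gamma_K$, the orbit meets $\{Y=0\}$ where $K\sqrt Z=(1+(m-1)\alpha)Z$, i.e. at $Z_c=K^{2}/(1+(m-1)\alpha)^{2}+o(K^{2})$, with $X_c<Z_c$; hence $Z_c\to0$ as $K\to0$. Now fix once and for all some $\lambda_*\in(-\beta/2\alpha,0)$ and consider the forward–invariant region $D_0=D_0(\lambda_*)$ bounded by $\{Y=0\}$, $\{X=0\}$, the surface $\mathcal{T}=\{\dot Y=0\}$ and the two–dimensional stable manifold $W^s(P_0^{\lambda_*})$, built exactly as in Step 1 of the proof of Lemma \ref{lem.limit} but with $\lambda_*$ in place of $\lambda_0/2$; that step only uses the invariance of $\{X=0\}$, the monotonicity of $X$ and $Z$ in $\{Y<0\}$, and the inward direction of the flow on $\mathcal{T}$ there, none of which needs $\sigma$ close to $2$. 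Along any orbit contained in $W^s(P_0^{\lambda_*})$ the component $Z$ increases up to $Z(P_0^{\lambda_*})$, so $Z\le Z(P_0^{\lambda_*})<\beta^{2}/4\alpha^{2}$ throughout $D_0$. The trace of $D_0$ on $\{Y=0\}$ is the region bounded by the segment $\{X=0,\ 0\le Z\le Z(P_0^{\lambda_*})\}$, the diagonal line $\{X=Z\}$ and the curve $W^s(P_0^{\lambda_*})\cap\{Y=0\}$, which runs from $(0,0,Z(P_0^{\lambda_*}))$ down to the diagonal and therefore contains every point $(X,0,Z)$ with $0<X<Z$ and $Z$ small. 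Consequently, for $K$ small enough the crossing point $(X_c,0,Z_c)$ of $\gamma_K$ lies in this trace, so $\gamma_K$ enters $D_0$ and is trapped there; in particular $Z\le Z(P_0^{\lambda_*})<\beta^{2}/4\alpha^{2}$ along $\gamma_K$ forever. By the proof of Proposition \ref{prop.second} the plane $\{Y=-\beta/2\alpha\}$ can be crossed only at points with $Z>\beta^{2}/4\alpha^{2}$, which now cannot occur; hence $\gamma_K$ reaches neither $Q_3$ nor $Q_4$, and by the dichotomy above it converges to a critical point of $\overline{D_0(\lambda_*)}$, that is to some $P_0^{\lambda}$ with $\lambda\in[\lambda_*,0)$. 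Since $\lambda_*$ was arbitrary this proves the proposition, and in fact shows $\gamma_K$ enters $P_0^{\lambda}$ with $\lambda\to0^-$ as $K\to0$.

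The main obstacle is the last geometric point of the previous paragraph: one must establish, for $\lambda_*$ near $0$, that the boundary surface $W^s(P_0^{\lambda_*})$ really caps $D_0(\lambda_*)$ in the $Z$–direction and that its trace on $\{Y=0\}$ contains a full one–sided neighbourhood of the origin inside $\{X<Z\}$; everything else is the soft dynamical argument above (trapping, monotonicity, absence of limit cycles, exclusion of the equator points by the $Z$–bound). An alternative route to the same conclusion, which bypasses this geometry, is to perform near $P_0$ the singular rescaling $X=K^{2}\tilde X$, $Y=K^{2}\tilde Y$, $Z=K^{2}\tilde Z$, $\eta=K^{-2}\tilde\eta$: to leading order $\tilde Y$ relaxes onto the slow manifold $\tilde Y=(\alpha/\beta)(\tilde X-\tilde Z)$, carrying the planar reduced flow $\tilde X'=\tilde X[\tfrac{\sigma-2}{2}\tilde X-\tfrac{\sigma+2}{2}\tilde Z]$, $\tilde Z'=(\sigma-2)\tilde X\tilde Z$, whose orbit leaving the origin tangentially to $\tilde Z=\tilde X^{2}$ has $\tilde X$ rising to an $O(1)$ maximum and then decaying to $0$ while $\tilde Z$ increases monotonically to a finite limit $\tilde Z_\infty$; undoing the rescaling gives $Z\to K^{2}\tilde Z_\infty+o(K^{2})<\beta^{2}/4\alpha^{2}$ and convergence to $P_0^{\lambda}$ with $\lambda=-(\alpha/\beta)K^{2}\tilde Z_\infty+o(K^{2})\in(-\beta/2\alpha,0)$.
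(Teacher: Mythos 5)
Your overall strategy --- push the center-manifold orbits of $P_0$ through the box $D_4$, across $\{Y=0\}$ at a point $(X_c,0,Z_c)$ with $X_c<Z_c$ and $Z_c$ small, and then trap them so that $Z$ never reaches $\beta^2/4\alpha^2$ (which is indeed what excludes $Q_3$ and $Q_4$ and forces convergence to some $P_0^{\lambda}$) --- is sound up to the trapping step, and your reduction of the problem to the bound $Z<\beta^2/4\alpha^2$ is exactly the right one. But the gap you flag yourself is genuine and is the whole difficulty. Nothing established so far prevents orbits of $W^s(P_0^{\lambda_*})$ from crossing $\{Y=0\}$ at arbitrarily small values of $Z$: monotonicity of $Z$ only gives the upper bound $Z\leq Z(P_0^{\lambda_*})$ on $W^s(P_0^{\lambda_*})$, not a lower bound on the height at which its orbits cross $\{Y=0\}$. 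If the curve $\gamma=W^s(P_0^{\lambda_*})\cap\{Y=0\}$ accumulates at the origin, your crossing point $(X_c,0,Z_c)$ can land on the wrong side of it, the orbit is not captured by $D_0(\lambda_*)$, and escape to $Q_3$ is not excluded. Proving that $\gamma$ stays away from the origin amounts to showing that orbits crossing $\{Y=0\}$ at small $Z$ end up at $P_0^{\lambda}$ with $|\lambda|$ small --- essentially the statement you are trying to prove, so the argument is circular as it stands. Note also that the obvious repair, a plane barrier $aX+Z=c$ used in the half-space $\{Y<0\}$, fails: on that plane $a\dot X+\dot Z=X[a(m-1)Y-\sigma aX+(\sigma-2)c]$, which is \emph{positive} near the segment $\{X\approx0,\ Y\approx0,\ Z\approx c\}$, so the orbit can cross upward there. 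The concluding rescaling sketch is formal and does not close the gap either; and, secondarily, the uniformity in $K$ of the expansion $Z_c=K^2/(1+(m-1)\alpha)^2+o(K^2)$ is asserted rather than proved.

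The paper closes exactly this hole differently and more economically: it never lets the orbit leave a small neighbourhood $B(P_0,\delta)$ of $P_0$. Working in the center-manifold variables $(X,T,Z)$ of Lemma \ref{lem.P0}, where $T=(\beta/\alpha)Y-X+Z=O(|(X,Z)|^2)$ along the relevant orbits, the flow across the plane $aX+Z=c$ with $a=3/((m-1)\alpha)$ has the sign of $\frac{1}{\beta}X\bigl[2aX-c+O(T)\bigr]$, hence is negative for $X<c/2a$; choosing $c$ with $c/2a<\delta$ and selecting the orbits that reach the turning locus $X=(m-1)\alpha Z$ with $X<c/2a$, one gets $aX+Z<c$ for all later times, so $Z<c$ and the whole orbit remains near $P_0$, where the only admissible $\omega$-limits are points $P_0^{\lambda}$ with $|\lambda|$ small. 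The essential ingredient your version is missing is precisely the relation $T=O(|(X,Z)|^2)$, which forces $Y\approx(\alpha/\beta)(X-Z)$ to be \emph{bounded away from zero} when $Z\approx c$ and $X\approx0$, and thereby makes the barrier computation sign-definite even after the orbit enters $\{Y<0\}$. Either keep the analysis local in this way, or supply the unproved geometric input about $W^s(P_0^{\lambda_*})$ that you identified.
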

\begin{proof}
By the study in Section \ref{sec.local} we deduce that all the orbits going out of $P_0$ on the center manifold \eqref{interm0} have to enter either one of the critical points $P_0^{\lambda}$ or the critical point $Q_3$. In order to control the orbits and show that some of them must choose one of the points $P_0^{\lambda}$, we consider the plane
\begin{equation}\label{plane4}
aX+Z=c, \qquad a=\frac{3}{(m-1)\alpha}, \ c>0.
\end{equation}
We work in the variables $(X,T,Z)$ characteristic for the center manifold as introduced in the proof of Lemma \ref{lem.P0}. The direction of the flow of the system \eqref{interm0} over this plane is given by the sign of the expression
\begin{equation*}
\begin{split}
H(X,T)&=\frac{a}{\beta}X[X+(m-1)\alpha T-(m-1)\alpha Z]+\frac{2}{\beta}XZ\\
&=\frac{1}{\beta}X[aX+(m-1)\alpha T-((m-1)\alpha a-2)(c-aX)]\\
&=\frac{1}{\beta}X[2aX-c+(m-1)\alpha T]
\end{split}
\end{equation*}
which is negative on the center manifold (where $T=T(X,Z)$ is a quadratic term) provided $X<X_0:=c/2a$. We next derive from the Local Center Manifold Theorem \cite[Theorem 1, Section 2.12]{Pe} that there exists a neighborhood $B(P_0,\delta)$ of the critical point $P_0$ where the orbits going out of $P_0$ on the center manifold behave as in the system \eqref{interm1}. Thus any orbit starting from $P_0$ on the center manifold will have an increasing component $X$ up to some maximum value, then the coordinate $X$ becomes decreasing while this orbit approaches the plane $\{Y=0\}$ , the maximum on $X$ being attained in the first order approximation on the plane $X=(m-1)\alpha Z$ after which the difference $X-(m-1)\alpha Z$ changes sign. Consider then the trace of the center manifold on the plane $X-(m-1)\alpha Z=0$. Let $c>0$ be sufficiently small such that $c/2a<\delta$. All the orbits on the center manifold intersecting the plane at points $(X,Z)$ with $0<X<X_0=c/2a$ (and consequently $Z=X/(m-1)\alpha<X_0/(m-1)\alpha$) satisfy
$$
aX+Z<aX_0+\frac{X_0}{(m-1)\alpha}=\frac{c}{2}+\frac{c}{2a(m-1)\alpha}=\frac{2c}{3}<c,
$$
whence these orbits go below the plane \eqref{plane4} and by the previous calculation, they cannot overpass it since $X$ will further decrease along the trajectory while $Z$ increases. These orbits will thus enter a critical point $P_0^{\lambda}$ with the component $Z>0$ very small (that is, $\lambda<0$ but sufficiently close to zero).
\end{proof}
We plot in Figure \ref{fig5} various orbits going out of $P_0$ on the center manifold \eqref{interm0} and entering critical points $P_0^{\lambda}$ as shown in the proof of Proposition \ref{prop.P0}.

\begin{figure}[ht!]
  \begin{center}
  \includegraphics[width=11cm,height=8cm]{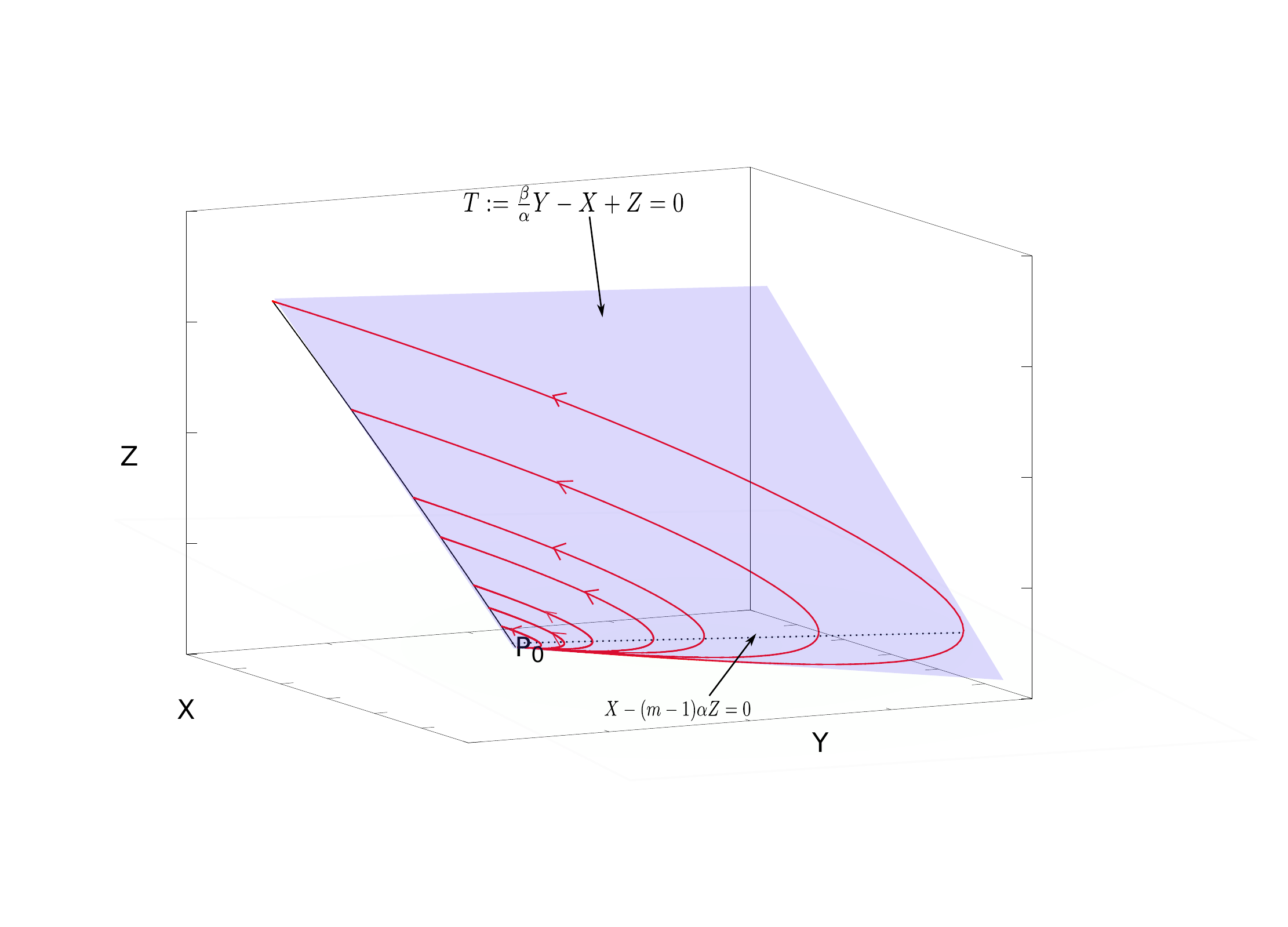}
  \end{center}
  \caption{Orbits going out of $P_0$ on the center manifold and entering critical points $P_0^{\lambda}$}\label{fig5}
\end{figure}

The following technical result taking place inside the invariant plane $\{Z=0\}$ is needed in order to prove afterwards the existence of good profiles with interface satisfying property (P1) in Definition \ref{def1}.
\begin{lemma}\label{lem.axis}
There exists a connection from $Q_1$ to $P_2$ inside the invariant plane $\{Z=0\}$ going out of $Q_1$ tangent to the line $\{Y=0\}$.
\end{lemma}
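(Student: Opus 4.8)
The statement concerns the flow of the system \eqref{PSsyst1} restricted to the invariant plane $\{Z=0\}$, which is the two-dimensional system $\dot{X}=X[(m-1)Y-2X]$, $\dot{Y}=-Y^2-(\beta/\alpha)Y+X-XY$. The goal is to produce a heteroclinic connection from $Q_1$ (at infinity, on the $\overline{X}$-axis) to $P_2$, and moreover to show that there is such an orbit which leaves $Q_1$ tangent to the line $\{Y=0\}$ inside $\{Z=0\}$. The plan is first to recall from the local analysis in Section \ref{sec.local} that $Q_1$ is an unstable node for the planar system and that, among the orbits emanating from $Q_1$, exactly one leaves tangent to $\{Y=0\}$; this is precisely the orbit containing profiles with $f(0)=a>0$ and $f'(0)=0$, i.e. satisfying property (P1). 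The orbit with $f'(0)=0$ corresponds to $Y\to 0$ as $X\to+\infty$, which singles out the tangency direction at $Q_1$.

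**Driving the orbit.** Next I would confine this particular orbit to a bounded invariant region of the quadrant $\{X>0,\ Y>0\}\cap\{Z=0\}$. Since on the orbit going out of $Q_1$ tangent to $\{Y=0\}$ we start with $Y$ small and positive and $X$ large, and since $\dot{X}=X[(m-1)Y-2X]<0$ as long as $Y<\tfrac{2}{m-1}X$, the component $X$ decreases initially; meanwhile the sign of $\dot{Y}$ is governed by $X-Y^2-(\beta/\alpha)Y-XY$, which is positive for $X$ large and $Y$ small, so $Y$ increases. I would then build straight-line barriers in the $(X,Y)$-plane: the line $\{Y=Y(P_2)\}$ and the line $\{X=X(P_2)\}$, and check (exactly as in Step 4 of Proposition \ref{prop.small}, using the region $D_4$) that the flow on these lines points inward, so that once the orbit enters the rectangle $D_4=\{0\le X\le X(P_2),\ 0\le Y\le Y(P_2)\}$ it cannot leave except possibly across $\{Y=0\}$. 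The key point is that the orbit going out of $Q_1$ tangent to $\{Y=0\}$ cannot cross $\{Y=0\}$: the flow on $\{Y=0\}$ inside $\{Z=0\}$ is $\dot{Y}=X>0$ for $X>0$, so $\{Y=0\}$ is crossed only upward, and since the orbit starts with $Y>0$ it stays in $\{Y\ge 0\}$. Hence the orbit is trapped in $D_4$.

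**Extracting the connection.** Inside $D_4\cap\{Z=0\}$ the only critical point is $P_2$ (the endpoints $P_0$, $P_1$ of the parabola have $Z=0$ too, but $P_0$ lies on the boundary corner and $P_1=(0,-\beta/\alpha,0)$ is not in $D_4$); $P_0=(0,0,0)$ sits at a corner, but the orbit emanating from $Q_1$ tangent to $\{Y=0\}$ has $f(0)=a>0$, i.e. $X\to+\infty$ at $\xi=0$, so it is not asymptotic to $P_0$ as $\xi\to 0$; and since it is trapped in a compact region with no limit cycles (the quadrant is a Dulac/Bendixson region here, or one argues directly from monotonicity of $X$ in the relevant subregion, exactly the no-limit-cycle reasoning used in Proposition \ref{prop.att}), the Poincaré–Bendixson theorem forces its $\omega$-limit to be a critical point, which must be $P_2$. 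This produces the desired connection from $Q_1$ to $P_2$ inside $\{Z=0\}$, leaving $Q_1$ tangent to $\{Y=0\}$.

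**Main obstacle.** The delicate point is ruling out that the trapped orbit accumulates on $P_0$ rather than $P_2$, and ruling out periodic orbits. For the former I would use that along the orbit going out of $Q_1$ tangent to $\{Y=0\}$ the component $X$ stays bounded away from $0$ until $Y$ is of order $Y(P_2)$ — more precisely, while $Y$ is small one has $\dot{X}/X=(m-1)Y-2X$, and a quantitative lower barrier (a line $\{X=\delta\}$ with $\delta>0$ small whose inward-pointing property one checks from the sign of $\dot{X}$ when $Y$ is bounded below) confines the orbit into $\{X\ge\delta\}$, excluding $P_0$. For the latter, absence of limit cycles in $\{X>0,Y>0,Z=0\}$ follows because any periodic orbit would have to enclose $P_2$, but the rectangle argument plus the monotone-entry structure (or a Dulac function of the form $X^{-1}$) precludes this; this is the standard argument already invoked for the three-dimensional system in Proposition \ref{prop.att}, and it restricts cleanly to the invariant plane. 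I expect the bookkeeping of the inward-pointing inequalities on the four bounding segments (two of which are the coordinate axes, automatically invariant) to be routine, so the real content is the two exclusion arguments above.
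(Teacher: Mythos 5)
Your proposal has two genuine gaps, both stemming from working in the original $(X,Y)$ coordinates, where $Q_1$ sits at infinity. First, the identification of the distinguished orbit is wrong: along the orbit containing profiles with $f(0)=a>0$, $f'(0)=0$, the variable $Y=\frac{m}{\alpha}\xi^{-1}f^{m-2}(\xi)f'(\xi)$ does not tend to $0$ but to $1$, because the profile equation \eqref{SSODE} forces $(f^m)''(0)=\alpha a$, i.e. $f''(0)=\alpha a^{2-m}/m$, whence $Y\to \frac{m}{\alpha}a^{m-2}f''(0)=1$. Moreover, ``tangent to $\{Y=0\}$'' cannot be read as ``slope $Y/X\to0$'', since \emph{every} orbit emanating from $Q_1$ satisfies $Y/X\to 0$; to single out the right orbit one must blow up the point at infinity, which is exactly what the paper does by passing to $(w,y,z)=(1/X,Y/X,Z/X)$, where $Q_1$ becomes a hyperbolic node with eigenvalues $2,1,\sigma$ and the distinguished orbit is the unique one in $\{z=0\}$ tangent to $v_1=(1,1,0)$ (i.e. $y\sim w$, which is precisely $Y\to1$), all the others in that plane being tangent to $v_2=(0,1,0)$ and carrying profiles with $f'(0)\neq0$.

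Second, and more seriously, your trapping region $D_4=\{0\le X\le X(P_2),\ 0\le Y\le Y(P_2)\}$ is a bounded rectangle, while the orbit departs from $X=+\infty$ with $Y$ near $1>Y(P_2)$, i.e. it starts in the quadrant $\{X>X(P_2),\ Y>Y(P_2)\}$ where both walls of $D_4$ repel: on $\{X=X(P_2),\,Y>Y(P_2)\}$ one has $\dot X>0$, and on $\{Y=Y(P_2),\,X>X(P_2)\}$ the expression $E$ from Step 4 of the proof of Proposition \ref{prop.small} is positive. You never show that the orbit reaches $D_4$ at all; a priori it could escape to $Q_2$ or $Q_3$ before ever entering the rectangle. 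This global passage from infinity down to a neighborhood of $P_2$ is the actual content of the lemma, and it is what the paper's invariant region $S$ is built to control: in the $(w,y)$-chart, $S$ is a \emph{compact} curvilinear triangle joining $Q_1$ (the origin) to $P_2$, bounded by the $y$-axis, the line $y=w/(\alpha(m+1))$ (which is the horizontal line $Y=Y(P_2)$ in the original variables) and the $\dot y$-nullcline; the flow points inward on its boundary, the vector $v_1$ points into it, and $y$ increases inside it, which forces the orbit to end at $P_2$. Your exclusion of $P_0$ and of limit cycles would be fine once the orbit is known to be trapped in a compact region containing $P_2$, but without the preceding global step the proof does not close.
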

\begin{proof}
Let us recall first that, as shown in Lemma \ref{lem.P2}, the critical point $P_2$ is an attractor for the restriction of the system \eqref{PSsyst1} inside the invariant plane $\{Z=0\}$. The main difficulty is that the point $Q_1$ is a critical point at infinity, lying on the Poincar\'e hypersphere, making its analysis with barriers as in the previous Propositions quite difficult. To this end, recalling that $Q_1=(1,0,0,0)$ on the Poincar\'e hypersphere, we change the system using part (a) in \cite[Theorem 5, Section 3.10]{Pe} stating that the system \eqref{PSsyst1} in a neighborhood of $Q_1$ is topologically equivalent to the following system in new variables $(w,y,z)$ near the origin $(w,y,z)=(0,0,0)$ (that we will denote also by $Q_1$),
\begin{equation}\label{systinf1}
\left\{\begin{array}{ll}\dot{w}=w(2-(m-1)y),\\
\dot{y}=y+w-my^2-\frac{\beta}{\alpha}yw-zw,\\
\dot{z}=z(\sigma-(m-1)y),
\end{array}\right.
\end{equation}
where the new variables are given (with respect to our usual variables introduced in \eqref{PSvar1}) by
\begin{equation}\label{PSvar2}
w=\frac{1}{X}, \qquad y=\frac{Y}{X}, \qquad z=\frac{Z}{X}.
\end{equation}
We refer the reader to \cite[Lemma 3.3]{IS20b} for more details on the system \eqref{systinf1}. An easy calculation gives that the point $P_2$ becomes in these variables
$$
P_2=(w(P_2),y(P_2),0), \qquad w(P_2)=\frac{2(m+1)\alpha}{m-1}, \ y(P_2)=\frac{2}{m-1}.
$$
We will thus work with the system \eqref{systinf1} throughout this proof. The linearization of this system around the origin has eigenvalues and corresponding eigenvectors
$$
\lambda_1=2, \ \lambda_2=1, \ \lambda_3=\sigma, \qquad v_1=(1,1,0), \ v_2=(0,1,0), \ v_3=(0,0,1).
$$
Since $Q_1$ is a hyperbolic point (an unstable node) we infer by the Hartman Theorem \cite[Section 2.8, p.127]{Pe} and standard facts about linear systems that all the trajectories going out of $Q_1$ except for a two-dimensional sub-manifold go out tangent to the eigenvector corresponding to the smallest eigenvalue, that is $v_2$. It is easy to see that the profiles contained in these trajectories are the ones with $f(0)=A>0$ and $f'(0)\neq0$. There remains a two-dimensional unstable manifold tangent to the space spanned by the eigenvectors $v_1$ and $v_3$. Inside this manifold, all the trajectories but one go out tangent to the eigenvector $v_1$, since it corresponds to a smaller eigenvalue as always $2<\sigma$ in our study. These orbits going out tangent to the vector $v_1$ have a behavior given by $y=w$ in a neighborhood of the origin of the system \eqref{systinf1}, which in terms of profiles after undoing the changes of variables \eqref{PSvar2} and then \eqref{PSvar1} reads
$$
\frac{m}{\alpha}\xi^{-1}f^{m-2}(\xi)f'(\xi)\sim 1, \qquad {\rm as} \ \xi\to0,
$$
which after a direct integration gives
$$
f(\xi)\sim\left(A+\frac{\alpha(m-1)}{2m}\xi^{2}\right)^{1/(m-1)}, \qquad A>0 \ {\rm arbitrary},
$$
that is, good profiles according to Definition \ref{def1} with $f(0)=A^{1/(m-1)}>0$ and $f'(0)=0$. We also notice that the vector $v_1$ lies in the invariant plane $\{z=0\}$ of the system \eqref{systinf1}. We next restrict our study to this invariant plane, where the reduced system writes
\begin{equation}\label{systinf1.bis}
\left\{\begin{array}{ll}\dot{w}=w(2-(m-1)y),\\
\dot{y}=y+w-my^2-\frac{\beta}{\alpha}yw,
\end{array}\right.
\end{equation}
and prove that the orbit going out of $(0,0)$ tangent to the reduced vector $v_1=(1,1)$ connects to $P_2$. To this end, we consider the region $S$ in the phase plane associated to the system \eqref{systinf1.bis} limited by the $y$ axis and the following line and curve
\begin{equation}\label{interm15}
y=\frac{w}{\alpha(m+1)}, \ {\rm respectively} \ y+w-my^2-\frac{\beta}{\alpha}yw=0.
\end{equation}
Both the line and curve above connect the origin to the point $P_2$. We show that the region $S$ is invariant for the flow of the system \eqref{systinf1.bis}, that means, that any orbit entering the region $S$ cannot go out of it. This is done by inspecting the direction of the flow on the line and curve in \eqref{interm15}. On the one hand the normal direction to the line in \eqref{interm15} is given by the vector $(-1,\alpha(m+1))$ and the direction of the flow of the system \eqref{systinf1.bis} towards this line is given by the sign of the expression
\begin{equation*}
\begin{split}
F(w)&=-2w+\frac{m-1}{\alpha(m+1)}w^2-\frac{m}{\alpha(m+1)}w^2+w+\alpha(m+1)w-\frac{\beta}{\alpha}w^2\\
&=w\left[\alpha(m+1)-1-\frac{1+\beta(m+1)}{\alpha(m+1)}w\right]
\end{split}
\end{equation*}
which is positive for $w<2\alpha(m+1)/(m-1)=w(P_2)$. On the other hand the normal direction to the curve in \eqref{interm15} is given by the vector $(1-2(m-1)y/(\sigma+2),-2my+1-2(m-1)w/(\sigma+2))$ and the direction of the flow of the sytem \eqref{systinf1.bis} towards this curve is given by the sign of the expression
$$
G(w,y)=w(2-(m-1)y)\left(1-\frac{2(m-1)}{\sigma+2}y\right)
$$
which is positive for $y<2/(m-1)=y(P_2)$, since $\sigma+2>4$. Thus, the flow on the boundary of the region $S$ points towards the interior of the region, thus no orbit can go out of $S$. To ease the understanding of the technical details, we represent the region $S$ and the direction of the flow on and around its boundaries in Figure \ref{fig6}.

\begin{figure}[ht!]
  \begin{center}
  \includegraphics[width=11cm,height=8cm]{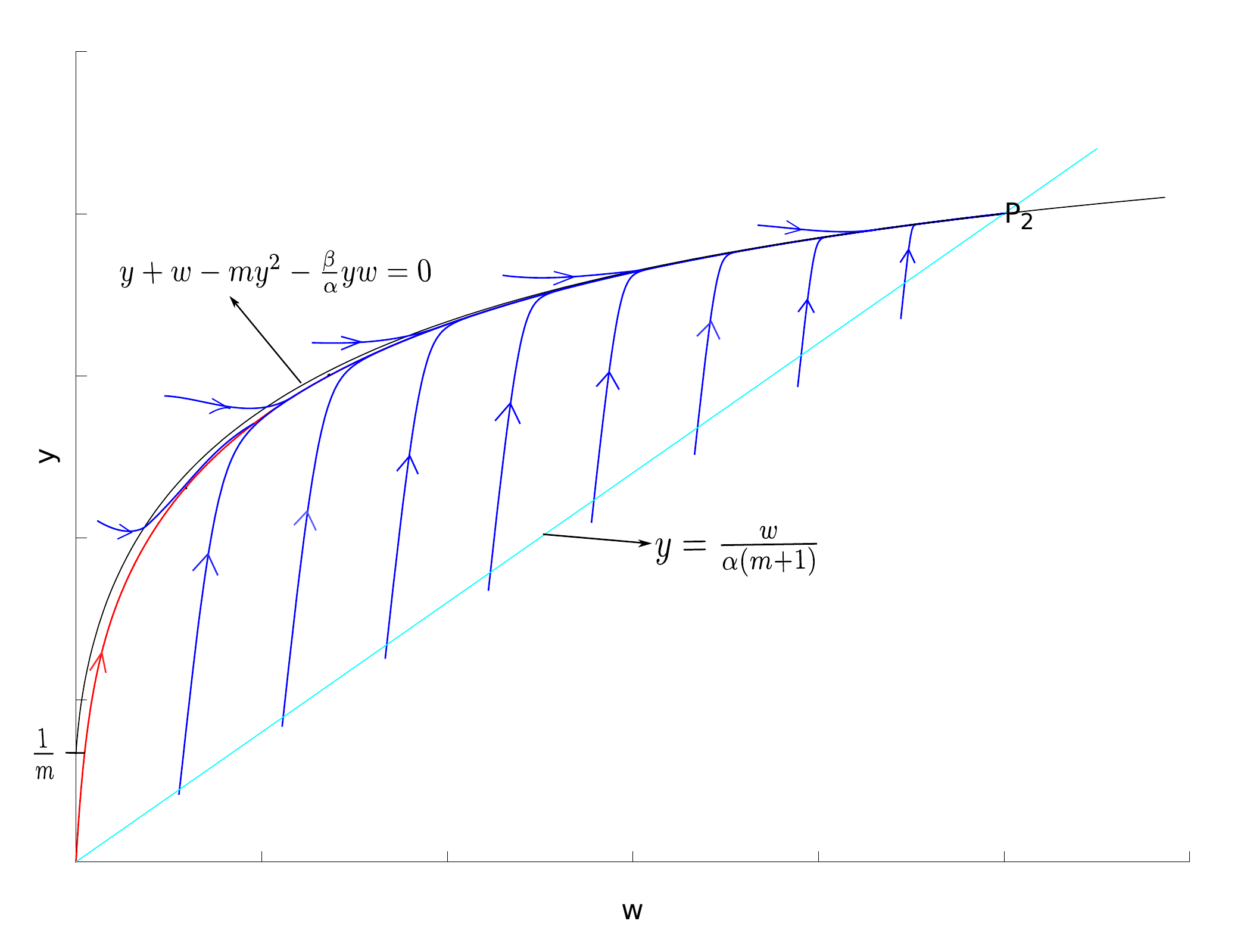}
  \end{center}
  \caption{The region $S$ and the orbit connecting $Q_1$ and $P_2$ in the plane $Z=0$}\label{fig6}
\end{figure}

It is not hard to check that the vector $v_1=(1,1)$ points towards the interior of $S$. Indeed, since $\alpha(m+1)>1$ for any $\sigma>2$, it follows that the slope of the line in \eqref{interm15} is strictly smaller than one, which is the slope of the line $y=w$. We thus infer that the orbit going out of $Q_1$ tangent to the vector $v_1=(1,1)$ enters $S$ and cannot go out of it later. Since its component $y$ is increasing, it ends by entering the critical point $P_2$. Undoing the change of variable \eqref{PSvar2} we conclude that the connection from $Q_1$ in the invariant plane $\{Z=0\}$ starting tangent to the line $\{Y=0\}$ enters $P_2$.
\end{proof}
\begin{proposition}\label{prop.Q1}
There exists $\sigma_0>2$ such that for any $\sigma\in(2,\sigma_0)$ there exists at least an orbit going out of $Q_1$, entering one of the critical points $P_0^{\lambda}$ and containing good profiles with interface.
\end{proposition}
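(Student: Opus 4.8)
The plan is to produce the required orbit by a small perturbation, off the invariant plane $\{Z=0\}$, of the heteroclinic chain $Q_1\to P_2\to P_0^{\lambda}$. First I would fix $\sigma_0$ to be any value in $(2,\sigma_*]$, where $\sigma_*$ is the threshold defined just before Lemma \ref{lem.limit}; by the very definition of $\sigma_*$ (and the three-set argument preceding it), for every $\sigma\in(2,\sigma_0)$ the unique orbit going out of $P_2$ into the half-space $\{Z>0\}$ enters a point $P_0^{\lambda(\sigma)}$ with $\lambda(\sigma)\in(-\beta/2\alpha,0)$, hence lying in the asymptotically stable set $\mathcal{S}$ of Proposition \ref{prop.att}. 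On the other hand, Lemma \ref{lem.axis} provides a connection from $Q_1$ to $P_2$ contained in the invariant plane $\{Z=0\}$, going out of $Q_1$ tangent to the line $\{Y=0\}$ and approaching $P_2$ along its two-dimensional stable manifold, which coincides locally with $\{Z=0\}$ by Lemma \ref{lem.P2}. Concatenating, we obtain a chain $Q_1\to P_2\to P_0^{\lambda(\sigma)}$ lying in $\{Z\ge0\}$, and the aim is to show that it persists, in a shadowing sense, as an honest orbit from $Q_1$ to the critical parabola.

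The heart of the argument is then a passage-near-a-saddle estimate. Working in the coordinates $(w,y,z)$ of the system \eqref{systinf1}, recall from the proof of Lemma \ref{lem.axis} that $Q_1$ carries a two-dimensional unstable manifold tangent to $\mathrm{span}\{v_1,v_3\}$, inside which all the emanating orbits but the one tangent to $v_3$ go out tangent to $v_1$ (because $2<\sigma$); these orbits tangent to $v_1$ form a one-parameter family, which I would parametrise by a number $z_0\ge0$, with $z_0=0$ the connection of Lemma \ref{lem.axis} lying in $\{z=0\}$ and, for $z_0>0$, orbits entering the region $\{Z>0\}$. Every member of this family contains profiles with $f(0)=A^{1/(m-1)}>0$ and $f'(0)=0$, that is, satisfying property (P1) of Definition \ref{def1}. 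I would then prove that for $z_0>0$ small enough the corresponding orbit enters one of the points $P_0^{\lambda}$. By continuous dependence on the parameter $z_0$ near $Q_1$ and on initial data along compact arcs, this orbit stays uniformly close to the $\{Z=0\}$-connection $Q_1\to P_2$ until it enters a small neighbourhood of $P_2$; since $\{Z=0\}$ is invariant and $\dot Z=(\sigma-2)XZ\ge0$, its $Z$-component is positive and of order $z_0$ along that arc. Because $P_2$ is a hyperbolic saddle with stable manifold $\{Z=0\}$ and one-dimensional unstable manifold tangent to the vector $e_3$ of \eqref{interm.bis} pointing into $\{Z>0\}$, the Inclination ($\lambda$-)Lemma applies: the orbit cannot converge to $P_2$, it is captured by the unstable direction and ejected inside a thin tube about the unique orbit going out of $P_2$, which by the choice of $\sigma_0$ lands at $P_0^{\lambda(\sigma)}\in\mathcal{S}$. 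As $z_0\to0$ the shadowing of the whole chain becomes arbitrarily sharp, so for $z_0$ small enough the perturbed orbit falls into the basin of attraction of $\mathcal{S}$ (Proposition \ref{prop.att}) and therefore converges to some $P_0^{\lambda'}$ with $\lambda'\in(-\beta/2\alpha,0)$. Since it goes out of $Q_1$ tangent to $v_1$ and enters a point of the critical parabola on its stable manifold, Lemma \ref{lem.P1} ensures that the profiles it contains have an interface at some $\xi_0\in(0,\xi_{max}]$; combined with (P1) this gives good profiles with interface, proving the proposition.

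The step I expect to be the main obstacle is making the $\lambda$-lemma passage quantitative and uniform: one must show that every orbit entering a prescribed neighbourhood of $P_2$ with an $O(z_0)$ component along the unstable direction leaves that neighbourhood inside a tube around the unstable manifold whose radius tends to $0$ as $z_0\to0$, and that after leaving it remains confined all the way to $\mathcal{S}$ and is not diverted (for instance towards $Q_3$) — which is where the monotonicity of the components $X$ and $Z$ together with the trapping region $D_2\cup D_3$ from the proof of Proposition \ref{prop.small} come in to control the ejected orbit. A minor technical point is to check that, for $z_0>0$ small, the orbit of the family is genuinely tangent to $v_1$ and not to $v_3$, which is immediate from the node structure at $Q_1$ since the eigenvalue along $v_1$ equals $2<\sigma$ (the eigenvalue along $v_3$), and that the orbit does not leave the region $\{Y>0\}$ before reaching the vicinity of $P_2$, which follows from its closeness to the Lemma \ref{lem.axis} connection, itself contained in $\{Y>0\}$.
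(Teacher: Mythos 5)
Your proof is correct, but its second half takes a genuinely different route from the paper's. Both arguments begin identically: Lemma \ref{lem.axis} supplies the connection $Q_1\to P_2$ inside $\{Z=0\}$ tangent to $v_1$, and continuity within the two-dimensional strong unstable manifold of $Q_1$ yields nearby orbits, carrying profiles with $f(0)=A>0$ and $f'(0)=0$, that enter an arbitrarily small half-ball $B(P_2,\delta)\cap\{Z>0\}$. From there the paper does not invoke any saddle-passage lemma: it simply observes that for $\delta$ small this half-ball is contained in the trapping region $D_1$ built in Step 3 of the proof of Proposition \ref{prop.small}, and then lets the explicit barriers $D_1$, $D_2$, $D_3$ drive the orbit into one of the points $P_0^{\lambda}$ exactly as there; this is why the paper's $\sigma_0$ is the same one as in Proposition \ref{prop.small}. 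You instead handle the passage near $P_2$ with the inclination lemma (legitimate, since $P_2$ is hyperbolic with $W^s_{loc}(P_2)\subset\{Z=0\}$ by Lemma \ref{lem.P2} and a one-dimensional unstable direction $e_3$ pointing into $\{Z>0\}$), and then use the asymptotic stability of $\mathcal{S}$ from Proposition \ref{prop.att} together with finite-time continuous dependence to land on the critical parabola, identifying the interface via Lemma \ref{lem.P1}. Your route is softer and works on the whole range $(2,\sigma_*)$ where the orbit from $P_2$ is known to enter $\mathcal{S}$ (a priori a different interval from the $(2,\sigma_0)$ on which the barriers of Proposition \ref{prop.small} close up), at the price of invoking the $\lambda$-lemma; the paper's route is more elementary and recycles the barrier construction already in place. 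One remark: the ``main obstacle'' you flag (uniformity of the tube around $W^u(P_2)$ and confinement all the way to $\mathcal{S}$) is not really an obstacle and needs no appeal to $D_2\cup D_3$: since the entry points converge to a point of $W^s_{loc}(P_2)\setminus\{P_2\}$ as $z_0\to0^+$, the exit points from $B(P_2,\delta)$ converge to $W^u(P_2)\cap\partial B(P_2,\delta)$, and $W^u(P_2)$ reaches the open basin of attraction of $\mathcal{S}$ in finite time, so plain continuous dependence over that finite time closes the argument.
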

\begin{proof}
Fix $\sigma>2$. We infer from Lemma \ref{lem.axis} and standard continuity arguments that there exists $\delta>0$ such that a connection going out of $Q_1$ tangent to the eigenvector $v_1$ intersect the half-ball $B(P_2,\delta)\cap\{Z>0\}$. But for $\delta>0$ sufficiently small, this half-ball is completely contained in the region
$$
D_1=\left\{0\leq X\leq X^*, 0\leq Y\leq\frac{1}{2}, 0\leq Z\leq-cY+d \right\}
$$
introduced in Step 3 of the proof of Proposition \ref{prop.small}. Taking $\sigma_0$ small exactly as in Proposition \ref{prop.small}, it follows as there that the orbit going out of $Q_1$ will enter one of the critical points $P_0^{\lambda}$.
\end{proof}
Let us notice here that some of the orbits coming out of the critical point $Q_1$ (the ones lying "above" the orbit coming from $P_2$) are represented in Figures \ref{fig3} and \ref{fig4}, with their expected behavior. We are now ready to obtain the classification theorem as an immediate consequence of the previous propositions.
\begin{proof}[Proof of Theorem \ref{th.class}]
\textbf{Part (a)} follows from Proposition \ref{prop.P0}, since any profile contained in the orbits going out of $P_0$ and entering some point $P_0^{\lambda}$ is a good profile with interface with local behavior as in \eqref{beh.P0} near the origin. \textbf{Part (b)} follows by joining the results of Propositions \ref{prop.small} (giving the good profiles with behavior as in \eqref{beh.P2} as $\xi\to0$), \ref{prop.P0} (giving the good profiles with behavior as in \eqref{beh.P0} as $\xi\to0$) and \ref{prop.Q1} (giving the good profiles with interface such that $f(0)=A>0$, $f'(0)=0$). All these three types of good profiles with interface exist at the same time for $\sigma\in(2,\sigma_0)$, where $\sigma_0>2$ is as in Proposition \ref{prop.small}.

In order to prove \textbf{part (c)}, we combine the outcome of Proposition \ref{prop.att} and Lemma \ref{lem.limit}. Indeed, let $\lambda_0\in(-\beta/2\alpha,0)$ be fixed, let $\sigma_*$ be the smallest $\sigma$ such that the orbit from the critical point $P_2$ enters the vertex of the critical parabola \eqref{parabola} and recall the notation $\lambda(\sigma)$ to be the $Y$ coordinate of the point on the critical parabola to which the orbit going out of the critical point $P_2$ enters. Define the sets
\begin{equation*}
\begin{split}
&A=\{\sigma\in(2,\sigma_*):\lambda(\sigma)\in(\lambda_0,0)\}, \ B=\{\sigma\in(2,\sigma_*):\lambda(\sigma)=\lambda_0\}, \\ &C=\{\sigma\in(2,\sigma_*):\lambda(\sigma)\in(-\frac{\beta}{2\alpha},\lambda_0)\}.
\end{split}
\end{equation*}
Proposition \ref{prop.att} gives that the sets $A$ and $C$ are both open, while Lemma \ref{lem.limit} insures that $A$ is a non-empty set. It is easy to see that the set $C$ must be also non-empty as a jump from $\lambda_0$ directly to $\lambda=-\beta/2\alpha$ is impossible as it can be easily seen by an argument completely similar to the one in Lemma \ref{lem.limit} that we leave to the reader.

Finally \textbf{part (d)} follows immediately from Proposition \ref{prop.large}.
\end{proof}

\bigskip

\noindent \textbf{Acknowledgements} A. S. is partially supported by the Spanish project MTM2017-87596-P.

\bibliographystyle{plain}

\begin{thebibliography}{1}

\bibitem{AdB91}
D. Andreucci, and E. DiBenedetto, \emph{On the Cauchy problem and initial traces for a class of evolution equations with strongly nonlinear sources}, Ann. Scuola Norm. Sup. Pisa, \textbf{18} (1991),

\bibitem{AT05}
D. Andreucci, and A. F. Tedeev, \emph{Universal bounds at the
blow-up time for nonlinear parabolic equations}, Adv. Differential
Equations, \textbf{10} (2005), no. 1, 89-120.

\bibitem{BZZ11}
X. Bai, S. Zhou, and S. Zheng, \emph{Cauchy problem for fast
diffusion equation with localized reaction}, Nonlinear Anal.,
\textbf{74} (2011), no. 7, 2508-2514.

\bibitem{BL89}
C. Bandle, and H. Levine, \emph{On the existence and nonexistence of
global solutions of reaction-diffusion equations in sectorial
domains}, Trans. Amer. Math. Soc., \textbf{316} (1989), 595-622.

\bibitem{BK87}
P. Baras, and R. Kersner, \emph{Local and global solvability of a
class of semilinear parabolic equations}, J. Differential Equations,
\textbf{68} (1987), 238-252.

\bibitem{Carr}
J. Carr, \emph{Applications of Centre Manifold Theory}, Springer Verlag, New York, 1981.

\bibitem{CH}
S. N. Chow, and J. K. Hale, \emph{Methods of Bifurcation Theory}, Springer Verlag, New York-Berlin, 1982.

%

\bibitem{FdPV06}
R. Ferreira, A. de Pablo, and J. L. V\'azquez, \emph{Classification
of blow-up with nonlinear diffusion and localized reaction}, J.
Differential Equations, \textbf{231} (2006), no. 1, 195-211.

\bibitem{Fu66}
H. Fujita, \emph{On the blow-up of solutions of the Cauchy problem for $u_t=\Delta u+u^{1+\alpha}$}, J. Fac. Sci. Univ. Tokyo Sec. IA Math., \textbf{16} (1966), 105-113.

\bibitem{GV97}
V. A. Galaktionov, and J. L. V\'azquez, \emph{Continuation of blowup
solutions of nonlinear heat equations in several space dimensions},
Comm. Pure Appl. Math, \textbf{50} (1997), no. 1, 1-67.

\bibitem{GU06}
Y. Giga, and N. Umeda, \emph{On blow-up at space infinity for semilinear heat equations}, J. Math. Anal. Appl., \textbf{316} (2006), 538-555.

\bibitem{GLS10}
J.-S. Guo, C.-S. Lin, and M. Shimojo, \emph{Blow-up behavior for a
parabolic equation with spatially dependent coefficient}, Dynam.
Systems Appl., \textbf{19} (2010), no. 3-4, 415-433.

\bibitem{GLS13}
J.-S. Guo, C.-S. Lin, and M. Shimojo, \emph{Blow-up for a
reaction-diffusion equation with variable coefficient}, Appl. Math.
Lett., \textbf{26} (2013), no. 1, 150-153.

\bibitem{GS11}
J.-S. Guo, and M. Shimojo, \emph{Blowing up at zero points of
potential for an initial boundary value problem}, Commun. Pure Appl.
Anal., \textbf{10} (2011), no. 1, 161-177.

\bibitem{GS18}
J.-S. Guo, and P. Souplet, \emph{Excluding blowup at zero points of the potential by means of Liouville-type theorems},
J. Differential Equations, \textbf{265} (2018), no. 10, 4942-4964.



\bibitem{IMS20}
R. G. Iagar, A. I. Mu\~{n}oz, and A. S\'anchez, \emph{Existence versus non-existence of solutions to a reaction-diffusion equation with weighted strong reaction}, In preparation (2020).

\bibitem{IS19a}
R. G. Iagar, and A. S\'anchez, \emph{Blow up profiles for a quasilinear reaction-diffusion equation with weighted reaction with linear growth}, J. Dynam. Differential Equations, \textbf{31} (2019), no. 4, 2061-2094.

\bibitem{IS19b}
R. G. Iagar, and A. S\'anchez, \emph{Blow up profiles for a quasilinear reaction-diffusion equation with weighted reaction}, Preprint ArXiv no. 1811.10330, Submitted (2019).

\bibitem{IS20a}
R. G. Iagar, and A. S\'anchez, \emph{Blow up profiles for a reaction-diffusion equation with critical weighted reaction}, Nonlinear Anal. \textbf{191} (2020), paper no. 111628, 24 pages.

\bibitem{IS20b}
R. G. Iagar, and A. S\'anchez, \emph{Self-similar blow-up profiles for a reaction-diffusion equation with strong weighted reaction}, Preprint ArXiv no. 2004.05650, Submitted (2020).

\bibitem{KWZ11}
X. Kang, W. Wang, and X. Zhou, \emph{Classification of solutions of
porous medium equation with localized reaction in higher space
dimensions}, Differential Integral Equations, \textbf{24} (2011),
no. 9-10, 909-922.

\bibitem{Ke67}
Al. Kelley, \emph{The stable, center-stable, center, center-unstable, unstable manifolds}, J. Differential Equations, \textbf{3} (1967), no. 4, 546-570.

\bibitem{La84}
A. A. Lacey, \emph{The form of blow-up for nonlinear parabolic equations}, Proc. Royal Society Edinburgh Sect. A, \textbf{98} (1984), no. 1-2, 183-202.

\bibitem{LM90}
H. Levine, and P. Meier, \emph{The value of the critical exponent for reaction-diffusion equations in cones}, Arch. Rational Mech. Anal., \textbf{109} (1990), no. 1, 73-80.

\bibitem{Liang12}
Z. Liang, \emph{On the critical exponents for porous medium equation
with a localized reaction in high dimensions}, Commun. Pure Appl.
Anal., \textbf{11} (2012), no. 2, 649-658.

%

\bibitem{dP94}
A. de Pablo, \emph{Large-time behaviour of solutions of a reaction-diffusion equation}, Proc. Roy. Soc. Edinburgh Sect. A, \textbf{124} (1994), no. 2, 389-398.

%

\bibitem{dPV90}
A. de Pablo, and J. L. V\'azquez, \emph{The balance between strong reaction and slow diffusion}, Comm. Partial Differential Equations, \textbf{15} (1990), no. 2, 159-183.

\bibitem{dPV91}
A. de Pablo, and J. L. V\'azquez, \emph{Travelling waves and finite propagation in a reaction-diffusion equation}, J. Differential Equations, \textbf{93} (1991), no. 1, 19-61.

\bibitem{dPV92}
A. de Pablo, and J. L. V\'azquez, \emph{An overdetermined initial and boundary-value problem for a reaction-diffusion equation}, Nonlinear Anal., \textbf{19} (1992), no. 3, 259-269.

\bibitem{Pe}
L. Perko, \emph{Differential equations and dynamical systems. Third
edition}, Texts in Applied Mathematics, \textbf{7}, Springer Verlag,
New York, 2001.

\bibitem{Pi97}
R. G. Pinsky, \emph{Existence and nonexistence of global solutions
for $u_t=\Delta u+a(x)u^p$ in $\real^d$}, J. Differential Equations,
\textbf{133} (1997), no. 1, 152-177.

\bibitem{Pi98}
R. G. Pinsky, \emph{The behavior of the life span for solutions to
$u_t=\Delta u+a(x)u^p$ in $\real^d$}, J. Differential Equations,
\textbf{147} (1998), no. 1, 30-57.

\bibitem{QS}
P. Quittner, and Ph. Souplet, \emph{Superlinear parabolic problems.
Blow-up, global existence and steady states}, Birkhauser Advanced
Texts, Birkhauser Verlag, Basel, 2007.

\bibitem{S4}
A. A. Samarskii, V. A. Galaktionov, S. P. Kurdyumov, and A. P.
Mikhailov, \emph{Blow-up in quasilinear parabolic problems}, de
Gruyter Expositions in Mathematics, \textbf{19}, W. de Gruyter,
Berlin, 1995.

\bibitem{Su02}
R. Suzuki, \emph{Existence and nonexistence of global solutions of
quasilinear parabolic equations}, J. Math. Soc. Japan, \textbf{54}
(2002), no. 4, 747-792.

\bibitem{VPME}
J. L. V\'azquez, \emph{The porous medium equation. Mathematical theory}, Oxford Monographs in Mathematics, Oxford University Press, 2007.

\bibitem{Wig}
S. Wiggins, \emph{Introduction to applied nonlinear dynamical systems and chaos}, First Edition, Springer Verlag, New York, 1990.


\end{thebibliography}

\end{document}